\numberwithin{equation}{section}
\newtheorem{theorem}{Theorem}
\newtheorem{corollary}[theorem]{Corollary}
\newtheorem{proposition}[theorem]{Proposition}
\newtheorem{lemma}[theorem]{Lemma}
\theoremstyle{definition}
\newtheorem{definition}[theorem]{Definition}
\theoremstyle{remark}
\newtheorem{remark}[theorem]{Remark}
\numberwithin{theorem}{section}
\newcommand{\zz}{\mathbb{Z}}
\newcommand{\cc}{\mathbb{C}}
\newcommand{\qq}{\mathbb{Q}}
\newcommand{\ff}{\mathbb{F}}
\newcommand{\pp}{\mathbb{P}}
\newcommand{\gp}{\mathcal{P}}
\newcommand{\rr}{\mathbb{R}}
\newcommand{\cX}{\mathcal{X}}
\newcommand{\cO}{\mathcal{O}}
\newcommand{\cA}{\mathcal{A}}
\newcommand{\fin}{\text{fin}}
\newcommand{\tcX}{\widetilde{\cX}}
\newcommand{\Gcan}{\mathfrak{g}_{\mathrm{can}}}
\newcommand{\M}{\mathcal{M}}
\newcommand{\GaP}{\Gamma_0(p^2)}
\newcommand{\Rpar}{R^{\mathrm{par}}}
\newcommand{\Rell}{R^{\mathrm{ell}}}
\newcommand{\Rdis}{R^{\mathrm{dis}}}
\newcommand{\Rhyp}{R^{\mathrm{hyp}}}
\newcommand{\Ehyp}{\mathcal{E}^{\mathrm{hyp}}}
\newcommand{\Epar}{\mathcal{E}^{\mathrm{par}}}
\newcommand{\hh}{\mathbb{H}}
\newcommand{\GaInf}{\mathcal{B}}
\renewcommand{\L}{\mathcal{L}}
\newcommand{\smmat}[4]{\left(\begin{smallmatrix}
   #1 & #2\\
  #3 & #4\\
\end{smallmatrix}\right)}
\newcommand{\gem}{\ga_{\mathrm{EM}}}
\DeclareMathOperator{\pic}{Pic}
\DeclareMathOperator{\Div}{div}
\DeclareMathOperator{\spec}{Spec}
\newcommand{\tH}{\mathbb{H}}
\newcommand{\Z}{\mathbb{Z}}
\newcommand{\N}{\mathbb{N}}
\newcommand\SL{{\mathrm {SL}}}
\newcommand{\Ga}{\Gamma}
\newcommand{\ga}{\gamma}
\newcommand{\de}{\delta}
\newcommand{\De}{\Delta}
\newcommand{\la}{\lambda}
\newcommand{\La}{\Lambda}
\newcommand{\mb}{\mathbb}
\newcommand{\ov}{\overline}
\newcommand{\pa}{\partial}
\newcommand{\al}{\alpha}
\newcommand{\ti}{\tilde}
\newcommand{\tr}{\operatorname{tr}}
\newcommand{\muhyp}{\mu_{\mathrm{hyp}}}
\newcommand{\mucan}{\mu_{\mathrm{can}}}
\newcommand{\dis}{\operatorname{dis}}
\renewcommand{\Im}{\operatorname{Im}}
\renewcommand{\Re}{\operatorname{Re}}
\newcommand{\RG}{\mathcal{R}_{\infty}^{\Ga_0(p^2)}}
\newcommand{\CG}{\mathcal{C}_{\infty,0}^{\Gamma_0(p^2)}}
\newcommand{\C}{\mathcal{C}}
\newcommand{\Res}{\operatorname{Res}}
\newcommand{\ep}{\epsilon}
\newcommand\sE{{\mathcal{E}}}
\title{Arakelov Self-intersection numbers of minimal regular models of  
       modular curves $X_0(p^2)$}
\author{Debargha Banerjee}  
\email{debargha@iiserpune.ac.in}
\author{Diganta Borah}
\email{dborah@iiserpune.ac.in}
\author{Chitrabhanu Chaudhuri}
\email{chitrabhanu@iiserpune.ac.in}
\thanks{The first named author was partially supported by the SERB grant YSS/2015/001491. 
        The second named author was partially supported by the DST-INSPIRE grant IFA-13 MA-21}
\begin{document}

\begin{abstract}
  We compute an asymptotic expression for the  Arakelov self-intersection number of the relative
  dualizing sheaf of Edixhoven's  minimal regular model  for the modular  curve $X_0(p^2)$  over 
  $\qq$. The computation of the self-intersection numbers are used  to prove an effective version 
  of the Bogolomov conjecture for the semi-stable models of modular curves $X_0(p^2)$ and obtain 
  a bound on the stable Faltings height for those curves in a companion article~\cite{debarghachitra}.  
\end{abstract}

\subjclass[2010]{Primary 11F72; Secondary 14G40, 37P30, 11F37, 11F03, 11G50.}
\keywords{Arakelov theory; heights, Eisenstein series}
\maketitle

\setcounter{tocdepth}{1}
\tableofcontents{}

\section{Introduction} \label{Introduction}
In this article, we derive an asymptotic expression for the Arakelov self-intersection number of the relative dualizing sheaf of the minimal regular model over $\zz$ for the modular curve $X_0(p^2)$ in terms of its genus $g_{p^2}$ for a prime $p$. For odd, square-free $N \in \N$, this quantity was 
computed for the congruence subgroups $\Gamma_0(N)$ \cite{MR1437298}, $\Gamma_1(N)$ \cite{MR3232770} 
and recently for the principal congruence subgroups $\Gamma(N)$ \cite{Grados:Thesis}.  
 We generalize our work to semi-stable models of these modular curves in a companion article 
\cite{debarghachitra}. 
 We use the computation regarding the infinite part of the Arakelov self-intersection for the modular curve $X_0(p^2)$ done in the present paper to compute the Arakelov self-intersection numbers for semi-stable models  in \cite{debarghachitra}. 
From the viewpoint 
of Arakelov theory, the main motivation for studying the Arakelov self-intersection numbers is to prove an 
effective Bogomolov conjecture for the particular modular curve $X_0(p^2)$. Bogomolov conjecture 
was proved by Ullmo \cite{MR1609514} using Ergodic theory, though the proof is not effective.  
In \cite{debarghachitra}, we prove an effective Bogomolov conjecture and find an asymptotic expression 
of the stable Faltings heights for the modular curves of the form $X_0(p^2)$. 
In another ambitious direction, we hope that our results will 
find applications in finding Fourier coefficients of modular forms and residual Galois representations associated to modular forms following the strategy 
outlined in \cite{MR2857099}. 

The main technical difficulty of this paper lies in the fact that for square free $N$ the special fibers of 
the modular curves are reduced and even semi-stable over $\qq$, while without this hypothesis 
the special fiber is non-reduced and {\it not} semi-stable. 
We manage to remove the square-free assumption in our paper because of a careful analysis of the 
regular but non-minimal models of the corresponding modular curves, following Edixhoven  
\cite{MR1056773}.  
The bound on the self-intersection number for the infinite place in terms of Green's function has been achieved by the idea outlined 
by Zagier \cite{MR633667} using the Selberg trace formula.

Our first result concerns an asymptotic expression of the constant term of the Rankin-Selberg 
transform at the cusp $\infty$ of the Arakelov metric. To state it we need some notation. 
Let $\tH$ be the complex upper half plane and  denote the non-compact modular curve 
corresponding to the subgroup $\Ga_0(p^2)$ by $Y_0(p^2):=\Ga_0(p^2) \backslash \tH$. Let $\muhyp$ be the {\it hyperbolic measure} on the Riemann surface $X_0(p^2)$  and $v_{\GaP}$ be the volume of the compactified modular curve $X_0(p^2)$ \cite[p. 182]{MR2112196}.
We denote the weight zero Eisenstein series at the cusp $\infty$ by $E_{\infty,0}(z,s)$ and
$F$ be  the Arakelov metric on $X_0(p^2)$ (see \S~\ref{Essterm}). The Rankin-Selberg transform at the cusp $\infty$ of the Arakelov metric on $X_0(p^2)$ is defined to be 
\[
R_F(s):=\int_{Y_0(p^2)} E_{\infty,0}(z,s) F(z) \muhyp. 
\]
The above  function has a meromorphic continuation in the whole complex plane with simple pole at $s=1$ with residue $v_{\GaP}^{-1}$. Let the Laurent series expansion of the Rankin-Selberg transform at $s=1$ be given by
\[
R_F(s)=\frac{1}{v_{\GaP}(s-1)}+\mathcal{R}_{\infty}^{\Gamma_0(p^2)}+O(s-1).
\]

\begin{theorem} \label{RSconst}
  The constant term $ \mathcal{R}_{\infty}^{\Gamma_0(p^2)}$  in the Laurent series expansion of  
  the Rankin-Selberg transform of the Arakelov metric on  the modular curve $X_0(p^2)$ is asymptotically given by
  \[
   \mathcal{R}_{\infty}^{\Gamma_0(p^2)}=o\left(\frac{\log(p^2)}{g_{p^2}}\right). 
  \]
  \end{theorem}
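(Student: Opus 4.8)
Here is the route I would take. The plan is to compute $R_F(s)$ by Rankin--Selberg unfolding and then read off the constant term directly from the resulting Dirichlet series. Recall that the Arakelov metric is $F(z)=\frac{y^2}{g_{p^2}}\sum_j|f_j(z)|^2$, where $\{f_j\}$ is an orthonormal basis of weight two cusp forms for $\GaP$, so that $F\,\muhyp=\mucan$; in particular $F$ decays exponentially at every cusp. Writing $f_j(z)=\sum_{n\ge 1}a_n^{(j)}e^{2\pi i nz}$, the zeroth Fourier coefficient of $F$ at $\infty$ equals $\frac{y^2}{g_{p^2}}\sum_j\sum_n|a_n^{(j)}|^2 e^{-4\pi n y}$, and unfolding the Eisenstein series against $F$ gives
\[
R_F(s)=\frac{\Gamma(s+1)}{(4\pi)^{s+1}\,g_{p^2}}\,D(s+1),\qquad
D(w)=\sum_{j}\sum_{n\ge 1}\frac{|a_n^{(j)}|^2}{n^{w}}.
\]
Equivalently, substituting the Laurent expansion $E_{\infty,0}(z,s)=\frac{1}{v_{\GaP}(s-1)}+\kappa_\infty(z)+O(s-1)$ and using the rapid decay of $F$ to justify interchanging the limit and the integral yields the clean reformulation $\RG=\int_{Y_0(p^2)}\kappa_\infty(z)\,F(z)\,\muhyp$, the pairing of $\mucan$ against the Kronecker limit function $\kappa_\infty$.

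From the unfolded expression, the extraction of the constant term is a mechanical Laurent computation. Writing $D(w)=\frac{A}{w-2}+C_D+O(w-2)$ and $\frac{\Gamma(w)}{(4\pi)^{w}}=\frac{1}{(4\pi)^2}\bigl(1+(w-2)(\psi(2)-\log 4\pi)+\cdots\bigr)$, one finds
\[
\RG=\frac{C_D}{(4\pi)^2 g_{p^2}}+\frac{\psi(2)-\log 4\pi}{v_{\GaP}},
\]
where $A=\Res_{w=2}D(w)$ is fixed by the stated residue, $A=(4\pi)^2 g_{p^2}/v_{\GaP}$, and $C_D$ is the finite part of $D$ at $w=2$. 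The second summand is $O(v_{\GaP}^{-1})=O(p^{-2})$, hence $o(\log(p^2)/g_{p^2})$, so the entire theorem reduces to the single estimate $C_D=o(\log p)$.

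To estimate $C_D$ I would feed the Petersson trace formula (the holomorphic weight two instance of the Selberg/Kuznetsov formalism) into the inner sum, separating the diagonal term from the Kloosterman term:
\[
\sum_j|a_n^{(j)}|^2=4\pi n\Bigl(1-2\pi\!\!\sum_{p^2\mid c}\frac{S(n,n;c)}{c}\,J_1\!\Bigl(\tfrac{4\pi n}{c}\Bigr)\Bigr).
\]
The diagonal contributes $4\pi\,\zeta(w-1)$ to $D(w)$, whose finite part at $w=2$ is $4\pi\gem=O(1)$; the off-diagonal contributes $-8\pi^2\sum_{n\ge1}\frac1n\sum_{p^2\mid c}\frac{S(n,n;c)}{c}J_1(\frac{4\pi n}{c})$ to $C_D$. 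Because every modulus is divisible by $p^2$ we have $c\ge p^2$, so for the ranges $n\ll p^2$ the Bessel factor sits in its small-argument regime $J_1(4\pi n/c)\asymp n/c$; combining this decay with Weil's bound $|S(n,n;c)|\ll c^{1/2+\varepsilon}$ collapses the double sum to $O(1)$, in particular to $o(\log p)$. This gives $C_D=o(\log p)$ and hence the claim.

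The main obstacle is precisely this off-diagonal estimate, made uniform in $p$: a term-by-term application of Weil's bound is borderline and only barely yields a bound of the shape $O(\log p)$, whereas the little-$o$ saving has to be squeezed out of the congruence $p^2\mid c$ (which keeps all Bessel arguments small) together with genuine cancellation in the sum over $n$, for which the spectral large sieve or a direct appeal to the Kuznetsov formula is the natural tool. In the dual, $\kappa_\infty$-pairing formulation the same difficulty reappears as the need to bound $\int\kappa_\infty\,d(\mucan-v_{\GaP}^{-1}\muhyp)$ using the spectral gap and the scattering constants of $\GaP$; there the delicate point is that the cusp widths of $X_0(p^2)$ are what carry the potential $\log p$, and one must check that their contribution, weighted by the exponential decay of $F$ near the cusps, does not spoil the $o(\log p)$ bound.
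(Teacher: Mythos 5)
Your unfolding step and the resulting reduction are correct, and they follow a genuinely different route from the paper: you never touch the geometric side of the trace formula, whereas the paper writes $g_{p^2}F=K_2-K_0-D-C$ via the weight $0$ and weight $2$ spectral expansions and then estimates the hyperbolic, elliptic and parabolic contributions separately (Proposition~\ref{contb}). The identity $R_F(s)=\frac{\Gamma(s+1)}{(4\pi)^{s+1}g_{p^2}}D(s+1)$, the value $\Res_{w=2}D(w)=(4\pi)^2g_{p^2}/v_{\GaP}$, and the reduction of the theorem to the single estimate $C_D=o(\log p)$ are all fine.

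The gap is that this one estimate, which carries the entire theorem, is not proved. Two concrete problems. First, after inserting the Petersson formula the off-diagonal Dirichlet series is \emph{not} holomorphic at $w=2$: by your own residue computation it must have residue $(4\pi)^2g_{p^2}/v_{\GaP}-4\pi=-24\pi/p+O(p^{-2})\neq 0$ there, so the assertion that ``the off-diagonal contributes $-8\pi^2\sum_{n\ge1}n^{-1}\sum_{p^2\mid c}S(n,n;c)c^{-1}J_1(4\pi n/c)$ to $C_D$'' is not well defined; you need the constant term of an analytic continuation, and continuing $\sum_{c}S(n,n;c)c^{-2s}$ against Bessel weights uniformly in the level is itself nontrivial (Selberg/Kuznetsov theory). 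Second, the claimed collapse to $O(1)$ by Weil plus small-argument Bessel decay does not hold: for $n\le p^2$ one gets $\sum_{n\le p^2}n^{-1}\sum_{p^2\mid c}|S(n,n;c)|c^{-1}|J_1(4\pi n/c)|\ll p^{-3+\ep}\sum_{n\le p^2}n^{1/2}\ll p^{\ep}$, and a bound $O_\ep(p^{\ep})$ for every $\ep$ does not imply $o(\log p)$ (it is compatible with growth like $\log^2 p$); for $n\gg c$ the Bessel factor merely oscillates, so the double sum is not even absolutely convergent. You acknowledge that the saving must come from ``genuine cancellation'' via the spectral large sieve or Kuznetsov, but that cancellation is precisely the content of the theorem and is nowhere established. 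For comparison, the paper supplies the needed input in three separate pieces: the Jorgenson--Kramer/Iwaniec bound $\lim_{s\to1}\bigl(Z'_{\GaP}/Z_{\GaP}-1/(s-1)\bigr)=O_\ep(p^{2\ep})$ for the hyperbolic part (which enters with an extra factor $1/v_{\GaP}$, rendering $p^{2\ep}$ harmless), the Epstein zeta analysis built on Proposition~\ref{Es1} for the elliptic part in \S\ref{elliptic}, and the explicit scattering constants of Lemma~\ref{phiinfty} for the parabolic part. To close your argument you would need an input of comparable strength on the Kloosterman side; without it the proposal is a correct reformulation of the problem, not a proof.
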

The underlying philosophy in the proof of Theorem~\ref{RSconst} is the same as that of  Abbes-Ullmo  \cite[Theorem F, p. 6]{MR1437298},  Mayer  \cite{MR3232770} and M. Grados Fukuda~\cite{Grados:Thesis}. By invoking the Selberg trace formula, computation of the Rankin-Selberg transform of the Arakelov metric reduces to finding contribution of various motions of $\Ga_0(p^2)$ in the trace formula.
 The proof 
hinges on the simplification of suitable Eisenstein series (cf. Proposition~\ref{Es1}) that in turn is a generalization of \cite[Proposition 3. 2. 2]{MR1437298}. The actual computation of the hyperbolic [\S~\ref{hyperbolic}] and parabolic [\S~\ref{parabolic}] contributions is slightly different from the above mentioned papers because of the condition we imposed on $N$. 

Note that the elliptic contribution in the Selberg trace formula will not follow directly in the same way as that of Abbes-Ullmo \cite{MR1437298}.  In loc. cit., the authors used the square free assumption in a crucial way to factorize the Epstein zeta functions suitably  (cf.~\cite[ Lemma 3.2.4]{MR1437298}).  As Mayer  \cite{MR3232770} and M. Grados Fukuda~\cite{Grados:Thesis} worked with modular curves of the form $X_1(N)$ and $X(N)$ respectively, there is no elliptic contribution in those cases. We use an observation of  M. Grados Fukuda in conjunction to the book of 
Zagier~\cite{MR631688} to find a suitable bound on the elliptic contribution in Proposition~ \ref{contb}.  In the present paper, we give bounds on the  terms in the Laurent series expansion of a certain Zeta function that appears in the elliptic contribution [\S~\ref{elliptic}] rather than 
finding the actual expression as accomplished by Abbes-Ullmo  \cite{MR1437298}. The fact that our $N$ has only one prime factor is an advantage for us.

The computations of these three different types of contributions may be a bit complicated for general modular curves depending on the number of factors of $N$. We strongly believe that it is possible to prove an analogue of Theorem~\ref{RSconst} for modular curves of the form $X_0(N)$ for general $N$ by the same strategy and modifying Proposition 3.2.2 of Abbes-Ullmo  \cite{MR1437298}  suitably. However, the 
validity of our Theorem~\ref{MaintheoremDDC} depends on the information about special fibers of the arithmetic surface associated to a modular curve and this can't be 
extended to an arbitrary $N$  without non trivial algebro-geometric consideration. Hence, we choose to give a complete proof of Theorem~\ref{RSconst} only for $N=p^2$ in this paper. 
Altough, we write down the computations in the sections \S~\ref{cangreen}, ~\ref{Tracenice} to suit our specific modular curves $X_0(p^2)$ considered 
in this paper, most of the results can be generalized with some minor changes to any modular curve of the form $X_0(N)$ with arbitrary $N \in \N$. For the general strategy  
to prove the theorem for general modular curves, we wish to refer to Abbes-Ullmo  \cite{MR1437298}, 
Mayer  \cite{MR3232770} and M. Grados Fukuda~\cite{Grados:Thesis}.

Being an algebraic curve over $\qq$, $X_0(p^2)$ has a minimal regular model over $\zz$ which we denote 
by $\cX_0(p^2)$. Let $\overline{\omega}_{p^2}$ be the relative dualizing sheaf of $\cX_0(p^2)$ equipped with the 
Arakelov metric and  $\overline{\omega}_{p^2}^2 = \langle \overline{\omega}_{p^2}, \overline{\omega}_{p^2}\rangle$ be 
the Arakelov self-intersection number as defined in Section~\ref{sec:Prelim}.

The following theorem is 
analogous to Proposition D of \cite{MR1437298}, Theorem 1 of \cite{MR3232770} and Theorem 5.2.3 of \cite{Grados:Thesis} for the modular curve $X_0(p^2)$:

\begin{theorem} \label{MaintheoremDDC}
  The Arakelov self intersection numbers for the modular curve $X_0(p^2)$ satisfy the following asymptotic formula
  \begin{equation*}
    \overline{\omega}_{p^2}^2=  3g_{p^2} \log(p^2)+o(g_{p^2} \log(p)).
  \end{equation*}
\end{theorem}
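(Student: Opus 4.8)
The plan is to follow the blueprint of Proposition~D of Abbes--Ullmo \cite{MR1437298}, splitting the arithmetic self-intersection into a non-archimedean and an archimedean part,
\[
\overline{\omega}_{p^2}^2 = \left(\overline{\omega}_{p^2}^2\right)_{\mathrm{fin}} + \left(\overline{\omega}_{p^2}^2\right)_{\infty},
\]
where only the place $p$ contributes to the finite part, since $X_0(p^2)$ has good reduction away from $p$. The dominant term $3 g_{p^2}\log(p^2)$ will come from the geometry of the special fibers of Edixhoven's minimal regular model $\mathcal{X}_0(p^2)$, while the archimedean term will be \emph{bounded} into the error using the Arakelov Green's function and the Selberg trace formula, consistently with the remark in the introduction that the infinite place is controlled rather than evaluated exactly.

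For the finite part I would work with Edixhoven's explicit description of $\mathcal{X}_0(p^2)$ \cite{MR1056773}. This is the most serious departure from the squarefree setting: the fiber at $p$ is non-reduced and \emph{not} semistable, so the vertical contributions cannot simply be read off a nodal model. I would list the irreducible components of the fiber together with their multiplicities, assemble the intersection matrix, and apply the adjunction and projection formulae component by component, weighting each intersection by $\log p$. Summing these contributions should yield $(\overline{\omega}_{p^2}^2)_{\mathrm{fin}} = 3 g_{p^2}\log(p^2) + o(g_{p^2}\log p)$, the non-squarefree geometry affecting only the lower-order vertical corrections.

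For the archimedean part I would express $(\overline{\omega}_{p^2}^2)_{\infty}$ through the canonical Green's function $\Gcan$ and compare it with the hyperbolic Green's function; the discrepancy is governed by the Arakelov metric $F$ relating $\mucan$ and $\muhyp$. The essential analytic input is Theorem~\ref{RSconst}: the constant term $\mathcal{R}_{\infty}^{\Gamma_0(p^2)}$ in the Laurent expansion of the Rankin--Selberg transform $R_F(s)$ satisfies $\mathcal{R}_{\infty}^{\Gamma_0(p^2)} = o(\log(p^2)/g_{p^2})$. Feeding this, together with the decomposition of $R_F(s)$ into hyperbolic, parabolic and elliptic contributions obtained from the Selberg trace formula, into the expression for the archimedean self-intersection should show $(\overline{\omega}_{p^2}^2)_{\infty} = o(g_{p^2}\log p)$, so that the infinite place does not affect the main term.

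The principal obstacle is twofold, and in both cases traceable to the non-squarefree level $N=p^2$. First, the non-semistable special fiber forces a genuinely new intersection-theoretic computation for $(\overline{\omega}_{p^2}^2)_{\mathrm{fin}}$, where Edixhoven's model is indispensable and the bookkeeping of component multiplicities is delicate. Second, the elliptic contribution does not factorize through Epstein zeta functions as in \cite[Lemma~3.2.4]{MR1437298}; instead I would use the bound on the elliptic term (cf.\ Proposition~\ref{contb}), which is adequate precisely because $N$ has the single prime factor $p$. Once the finite main term and the archimedean bound are in place, adding them and absorbing all lower-order pieces into $o(g_{p^2}\log p)$ yields the stated asymptotic.
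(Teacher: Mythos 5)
There is a genuine gap, and it lies at the heart of your plan: the assertion that the archimedean part is $o(g_{p^2}\log p)$ and that the full main term $3g_{p^2}\log(p^2)$ comes from the special fiber is false. In the paper's proof the main term splits as $6g_{p^2}\log p = 4g_{p^2}\log p + 2g_{p^2}\log p$, and the $4g_{p^2}\log p$ piece is \emph{archimedean}: it is $4g_{p^2}(g_{p^2}-1)\,\Gcan(\infty,0)$, coming from the Arakelov intersection $\langle H_0,H_\infty\rangle=-\Gcan(\infty,0)$ of the two cuspidal sections. Indeed $\Gcan(\infty,0)=\tfrac{6\log(p^2)}{p(p+1)}+o(\log(p^2)/g_{p^2})$ (Proposition~\ref{lem:analysis-main}); although this is small, multiplying by $4g_{p^2}(g_{p^2}-1)\sim 4g_{p^2}\cdot p^2/12$ produces $4g_{p^2}\log p$, two thirds of the main term. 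Theorem~\ref{RSconst} controls only the subleading Rankin--Selberg constant $\mathcal{R}_{\infty}^{\Gamma_0(p^2)}$; the leading behaviour of $\Gcan(\infty,0)$ comes from the scattering constant $\mathcal{C}_{\infty,0}^{\Gamma_0(p^2)}$ of Corollary~\ref{Secondterm}, which is emphatically not absorbed into the error. Conversely, the finite/vertical contribution $(g_{p^2}^2-1)\log p/s_p$ with $s_p=(p^2-1)/24$ supplies only $2g_{p^2}\log p$, one third of the main term, so your finite-part computation cannot close the argument.

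A second, structural issue: the splitting $\overline{\omega}_{p^2}^2=(\cdot)_{\mathrm{fin}}+(\cdot)_{\infty}$ is not canonical (it depends on a choice of sections), and you give no mechanism to evaluate either piece. The paper's actual route is to form the degree-zero divisors $D_m=K_{\cX_0(p^2)}-(2g_{p^2}-2)H_m+V_m$ for $m\in\{0,\infty\}$, orthogonal to all vertical divisors (Lemma~\ref{lem:divisors}), apply the Faltings--Hriljac theorem to convert $\langle D_m,D_m\rangle$ into N\'eron--Tate heights, and then invoke the Manin--Drinfeld theorem (plus a bound on the heights of elliptic points when $p\not\equiv 11\pmod{12}$) to show these heights are $O(\log p)$. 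This is what reduces $\overline{\omega}_{p^2}^2$ to the two computable quantities $\langle H_0,H_\infty\rangle$ and $V_m^2$ in Lemma~\ref{lem:selfinter}. Neither Faltings--Hriljac nor Manin--Drinfeld appears in your proposal, and without them there is no way to relate $\overline{\omega}_{p^2}^2$ to the Green's function at the cusps or to the intersection data of Edixhoven's model. Your reading of the role of the non-semistable fiber and of the elliptic contribution is otherwise sound, but the proof as outlined would yield at best $2g_{p^2}\log p$ plus an uncontrolled archimedean term.
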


Similar results on the Arakelov self intersection numbers for general arithmetic surfaces are obtained in~\cite{MR3581222}, \cite{Kuhnupperbound}. It is only possible to give an upper and lower bound for general arithmetic surfaces but in the case of modular curves of the form $X_0(p^2)$, we obtain  an asymptotic expression since the recquired algebro-geometric information is available thanks to the work of Bas Edixhoven.

\subsection*{Acknowledgements}
The article owes its existence to  several e-mail communications and encouragements 
of Professors Jurg Kramer, Bas Edixhoven, Robin De Jong, Ulf K\"uhn 
and Shou-Wu Zhang. The origin of this article is  a stimulating conference at the Lorentz Center, 
Leiden University on ``heights and moduli space" in June, 2013. The first named author sincerely 
acknowledges the support of the organizers Professors Robin De Jong, Gerard Freixas and  
Gerard Van Der Geer. The authors are deeply indebted to Dr. Anilatmaja Aryasomayajula to encourage 
them constantly to work on this fascinating problem and for answering several queries on the 
Arakelov Green's functions. The authors wish to express sincere gratitude to the organizers of 
the math symposium at IISER, Pune, 2015 from where the authors started discussing about the project.
Finally, we are deeply indebted to the anonymous referee for numerous suggestions that helped authors 
to improve quality of the article. 

\section{Arakelov intersection pairing} \label{sec:Prelim}
Let $K$ be a number field and and $R$ be its ring of integers. Let $\cX$ be an arithmetic surface over $\spec R$
(in the sense of Liu \cite[Chapter 8, Definition 3.14]{MR1917232}) with the map $f: \cX \to \spec R$. Let $X = \cX_{(0)}$
be the generic fiber which is a smooth irreducible projective curve over $K$. For each embedding $\sigma: K \to \cc$ 
we get a connected Riemann surface $\cX_{\sigma}$ by taking the $\cc$ points of the scheme
\begin{equation*}
  X \times_{\spec K, \sigma} \spec \cc.
\end{equation*}
Collectively we denote 
\begin{equation*}
  \cX_{\infty} = X(\cc) = \bigsqcup_{\sigma: K \to \cc} \cX_{\sigma}.
\end{equation*}

Any line bundle $L$ on $\cX$ induces a line bundle on $\cX_{\sigma}$ which we denote by $L_{\sigma}$. A metrized 
line bundle $\bar{L} = (L, h)$ is a line bundle $L$ on $\cX$ along with a hermitian metric $h_{\sigma}$ on each
$L_{\sigma}$. Arakelov invented an intersection pairing for metrized line bundles which we describe now. 
Let $\bar{L}$ and $\bar{M}$ be two metrized line bundles with 
non-trivial global sections $l$ and $m$ respectively such that the associated divisors do not have any common
components, then
\begin{equation*}
  \langle \bar{L}, \bar{M} \rangle = \langle L, M \rangle_{\fin} + 
                                     \sum_{\sigma: K \to \cc} \langle L_{\sigma}, M_{\sigma} \rangle.
\end{equation*}
The first summand is the algebraic part whereas the second summand is the analytic part of the intersection. 
For each closed point $x \in \cX$, $l_x$ and $m_x$ can be thought of as elements of $\cO_{\cX, x}$ 
via a suitable trivialization. If $\cX^{(2)}$ is the set of closed points of $\cX$, (the number 2 here signifies 
the fact that a closed point is an algebraic cycle on $\cX$ of codimension 2), then 
\begin{equation*}
  \langle L, M \rangle_{\fin} = \sum_{x \in \cX^{(2)}} \log \# (\cO_{\cX,x}/ (l_x,m_x)).
\end{equation*}
Now for the analytic part, we assume that the associated divisors of $l$ and $m$ which we 
denote by $\Div(l)_{\sigma}$ and $\Div(m)_{\sigma}$ on $\cX_{\sigma}$ do not have any common points, 
and that $\Div(l)_{\sigma} = \sum_{\alpha} n_{\alpha} P_{\alpha}$ with $n_{\alpha} \in \zz$, then 
\begin{equation*}
  \langle L_{\sigma}, M_{\sigma} \rangle = -\sum_{\alpha} n_{\alpha} \log || m(P_{\alpha}) ||
          - \int_{\cX_{\sigma}} \log ||l|| c_1(M_{\sigma}).
\end{equation*}
Here $||\cdot||$ denotes the norm given by the hermitian metric on $L$ or $M$ respectively and is clear from 
the context. The first Chern class of $M_{\sigma}$ is denoted by $c_1(M_{\sigma})$ and it is a closed $(1,1)$ form on 
$\cX_{\sigma}$ (see for instance Griffith-Harris \cite{MR1288523}).

This intersection product is symmetric in $\bar{L}$ and $\bar{M}$. Moreover if we consider the group of 
metrized line bundles upto isomorphisms, called the arithmetic Picard group denoted by $\widehat{\pic} \cX$,
then the arithmetic intersection product extends to a symmetric bilinear form on all of $\widehat{\pic} \cX$. It 
can be extended by linearity to the rational arithmetic Picard group 
\begin{equation*}
  \widehat{\pic}_{\qq} \cX = \widehat{\pic} \cX \otimes \qq.
\end{equation*}
For more details see Arakelov \cite{MR0466150, MR0472815} and Curilla \cite{Curilla:Thesis}.

Arakelov gave a unique way of attaching a hermitian metric to a line bundle on $\cX$,
see for instance Faltings \cite[Section 3]{MR740897}. We summarise the construction here. 
Note that the space $H^0(\cX_{\sigma}, \Omega^1)$ of holomorphic differentials on $\cX_{\sigma}$ has 
a natural inner product on it
\begin{equation*}
  \langle \phi, \psi \rangle = \frac{i}{2} \int_{\cX_{\sigma}} \phi \wedge \overline{\psi}. 
\end{equation*}
Let us assume that the genus of $\cX_{\sigma}$ is greater than or equal to $1$. Choose an orthonormal basis
$f_1^{\sigma}, \ldots, f_g^{\sigma}$ of $H^0(\cX_{\sigma}, \Omega^1)$, . The canonical volume form on 
$\cX_{\sigma}$ is 
\begin{equation*}
  \mucan^{\sigma} = \frac{i}{2g} \sum_{j=1}^g f_j^{\sigma} \wedge \overline{f_j^{\sigma}}.
\end{equation*}
There is a hermitian metric on a line bundle $L$ on $\cX$ such that $c_1(L_{\sigma}) = 
\deg(L_{\sigma}) \mucan^{\sigma}$ for each embedding $\sigma: K \to \cc$. This metric is unique
upto scalar multiplication. Such a metric is called admissible. An admissible metric may also be 
obtained using the canonical Green's function, we describe that procedure presently.

Let now $X$ be a Riemann surface of genus greater than $1$ and $\mucan$ the canonical volume form. The canonical Green's function for $X$ is the unique solution of the differential equation
\[
\pa_{z} \pa_{\ov z}\ \Gcan (z,w)=i\pi(\mucan(z)-\de_w(z))
\]
where $\de_w(z)$ is the Dirac delta distribution, with the normalization condition
\[
\int_X \Gcan(z,w) \mucan(z) = 0.
\]
For $Q \in X$ there is a unique admissible metric on $L = \cO_X(Q)$ such that the norm of the constant
function $1$, which is a section of $\cO_X(Q)$, at the point $P$ is given by 
\begin{equation*}
  |1|(P) = \exp(\Gcan(P,Q)).
\end{equation*}
By tensoring we can get an admissible metric on any line bundle on $X$. 

Let again $\cX$ be an arithmetic surface over $R$ as above. Now we assume that the generic genus of $\cX$ is greater 
than 1. To any line bundle $L$ on $\cX$ we can associate in this way a hermitian metric on $L_{\sigma}$ for each 
$\sigma$. This metric is called the Arakelov metric.

Let $L$ and $M$ be two line bundles on $\cX$, we equip them with the Arakelov metrics to get metrized line bundles
$\bar{L}$ and $\bar{M}$. The Arakelov intersection pairing of $L$ and $M$ is defined as arithmetic intersection 
pairing of $\bar{L}$ and $\bar{M}$		
\begin{equation*}
  \langle L, M \rangle_{Ar} = \langle \bar{L}, \bar{M} \rangle.
\end{equation*}
It relates to the canonical Green's function as follows. Let $l$ and $m$ be meromorphic sections of $L$ and $M$ as above.
Assume that the corresponding divisors don't have any common components. Furthermore let
\begin{equation*}
  \Div(l)_{\sigma} = \sum_{\alpha} n_{\alpha, \sigma} P_{\alpha, \sigma}, \quad
  \text{and} \quad 
  \Div(m)_{\sigma} = \sum_{\beta} r_{\beta, \sigma} Q_{\beta, \sigma}
\end{equation*}
then 
\begin{equation*}
  \langle L, M \rangle_{Ar} = \langle L, M \rangle_{\fin}  - \sum_{\sigma: K \to \cc}
                              \sum n_{\alpha,\sigma}r_{\beta,\sigma}\ \Gcan^{\sigma}
                              (P_{\alpha,\sigma}, Q_{\beta,\sigma}).
\end{equation*}

By $\overline{\omega}_{\cX, Ar}$ we denote the relative dualizing sheaf on $\cX$ (see Qing Liu \cite{MR1917232}, 
chapter 6, section 6.4.2) equipped with the Arakelov metric. We shall usually denote this simply by 
$\overline{\omega}$ if the arithmetic surface $\cX$ is clear from the context. 

We are interested in a particular invariant of the modular curve $X_0(p^2)$ which arises from Arakelov geometry
and has applications in number theory. The modular curve $X_0(p^2)$ which is defined over $\qq$ and has a minimal regular model 
$\cX_0(p^2)$ over $\zz$ for primes $p>5$. In this paper we shall calculate the Arakelov self intersection 
$\overline{\omega}^2 = \langle \overline{\omega}, \overline{\omega} \rangle$ of the relative dualizing sheaf 
on $\cX_{0}(p^2)$.

We retain the notation $K$ for a number field and $R$ its ring of integers. If $X$ is a smooth curve over $K$ 
then a regular model for $X$ is an arithmetic surface $p: \cX \to \spec  R$ with an isomorphism of the generic fiber 
$\cX_{(0)}$ to $X$. If genus of $X$ is greater than $1$ then there is a minimal regular model $\cX_{min}$, which 
is unique. $\cX_{min}$ is minimal among the regular models for $X$ in the sense that any proper birational morphism 
to another regular model is an isomorphism. Another equivalent criterion for minimality is that $\cX_{min}$ does not 
have any prime vertical divisor that can be blown down without introducing a singularity.

A regular model for $X_0(p^2)/\qq$ was constructed in Edixhoven~\cite{MR1056773}. We denote this model by 
$\tcX_0(p^2) / \zz$. This model is not minimal but a minimal model $\cX_0(p^2)$ is easily obtained 
by blowing down certain prime vertical divisors. We describe these constructions in Section \ref{sec:MinimalModel}.
\begin{remark}\label{genus-form}
By \cite[Theorem 3.1.1]{MR2112196}, the genus $g_{p^2}$ of $X_0(p^2)$ is given by
\label{genuscomp}
\[
g_{p^2}=1+\frac{(p+1)(p-6)-12c}{12}
\]
where $c \in \{0,\frac{1}{2},\frac{2}{3}, \frac{7}{6}\}$. %and  $g_{p^2} > 0$ if $p \geq 7$. 
\end{remark}

\section{Canonical Green's functions and Eisenstein series} \label{cangreen}

In this section, we evaluate the canonical Green's function $\Gcan$ for $X_0(p^2)$ at the cusps in terms of the Eisenstein series. 

\subsection{Eisenstein series}
We  recall the definition and some properties of the Eisenstein series that we need for our purpose. For a more elaborate discussion on Eisenstein series for general congruence subgroups, we refer to  K\"uhn~\cite{MR2140212} or Grados~\cite[p. 10]{Grados:Thesis}. Let $\partial(X_0(p^2))$ 
be the set of all cusps of $X_0(p^2)$. 
 By \cite{MR3251709}, we have a complete description of the set of cusps of modular curve $X_0(p^2)$
\[
\partial(X_0(p^2))=\left\{0, \infty, \frac{1}{p}, \ldots, \frac{1}{lp} \right\}, \quad  l=1, \ldots, (p-1). 
\]

For $P \in \partial(X_0(p^2))$,
let $\Ga_0(p^2)_P$ be the stabilizer of $P$ in $\Ga_0(p^2)$.  Denote by $\sigma_P$, any scaling matrix of the cusp $P$, i.e., $\sigma_P$ is an element of $\SL_2(\rr)$ with the properties $\sigma_P(\infty)=P$ and  
\[
\sigma_P^{-1}\Ga_0(p^2)_P \sigma_P=\Ga_0(p^2)_{\infty}=\left\{\pm \left(\begin{array}{cc}
1 & m\\
0 & 1\\
\end{array}\right) \mid m \in \Z \right\}. 
\] 
We fix such a matrix. For $\gamma=\left(\begin{smallmatrix}
a & b\\
c  & d
\end{smallmatrix}\right) \in \SL_2(\rr)$ and $k \in \{0,2\}$, define  the automorphic factor of weight $k$ to be 
\begin{equation}\label{auto-factor}
j_{\ga}(z;k)=\frac{(cz+d)^k}{|cz+d|^k}.
\end{equation}
Let $\tH=\{z| z \in \cc; \Im(z)>0\}$ be the complex upper half plane. 
\begin{definition}
 For $z \in \mathbb{H}$ and $s \in \cc$ with $\Re(s)>1$, the non-holomorphic Eisenstein series $E_{P,k}(z,s)$ at a cusp 
$P \in \partial(X_0(p^2))$ of weight $k$ is defined to be
\[
E_{P,k}(z,s)=\sum_{\ga \in \Ga_0(p^2)_P\backslash \Ga_0(p^2)} \big(\Im(\sigma_P^{-1} \ga z) \big)^s
j_{\sigma_P^{-1} \ga}(z;k)^{-1}.
\]
\end{definition}
The series $E_{P,k}(z,s)$ is a holomorphic function of $s$ in the region $\Re(s)>1$ and for each such $s$, it is an automorphic form (function if $k=0$) of $z$ with respect to $\Ga_0(p^2)$. Moreover, it has a meromorphic continuation to the whole complex plane. Also, $E_{P,k}$ is an eigenfunction of the hyperbolic Laplacian $\De_k$ of weight $k$. Recall that
\begin{equation}\label{hyp-lap}
\De_k=y^2\left(\frac{\pa^2}{\pa x^2} + \frac{\pa^2}{\pa y^2}\right)-ik(k-1) y \frac{\pa}{\pa x}.
\end{equation}
For any $N \in \N$, let $v_{\Ga_0(N)}$ be the volume of the Fuchsian group of first kind $\Ga_0(N)$ \cite[Equation 5.15, p. 183]{MR2112196}. 
We note that $E_{P,0}$ has a simple pole at $s=1$ with residue $1/v_{\GaP}$  \cite[Proposition 6.13]{MR1942691} independent of the $z$ variable.  Being an automorphic function, $E_{P,0}(z,s)$ has a Fourier series expansion at any cusp $Q$, given by
\begin{equation}\label{klf}
E_{P,0}(\sigma_Q(z),s)
=\de_{P,Q}y^s+\phi_{P,Q}^{\Ga_0(p^2)}(s)y^{1-s} +\sum_{n \neq 0} \phi_{P,Q} ^{\Ga_0(p^2)}(n,s) W_s (nz);
\end{equation}
where
\begin{align*}
\phi_{P, Q}^{\Ga_0(p^2)}(s) & = 
\sqrt{\pi} \frac{\Gamma(s-\frac{1}{2})}{\Gamma(s)}\sum_{c=1}^{\infty} c^{-2s}S_{P, Q}(0,0; c),\\
\phi_{P,Q}^{\Ga_0(p^2)}(n,s) & =\pi^s \Ga(s)^{-1} \vert n \vert^{s-1} \sum_{c=1}^{\infty}c^{-2s} S_{P,Q}(0,n;c). 
\end{align*}
Here, $S_{P, Q}(a,b; c)$ is the Kloosterman sum~\cite[p. 48, equation (2.23)]{MR1942691} and $W_s(z)$ is the Whittaker function~\cite[p. 20, equation (1.26)]{MR1942691}. Let
\begin{equation}\label{C-Ga}
\mathcal{C}_{P, Q}^{\Gamma_0(p^2)} = \lim_{s \to 1} \left( \phi_{P, Q}^{\Ga_0(p^2)}(s)-\frac{1}{v_{\Ga_0(p^2)}} \frac{1}{s-1}\right) 
\end{equation}
be the constant term in the Laurent series expansion of $\phi_{P, Q}^{\Ga_0(p^2)}(s)$. 

The following proposition regarding the Eisenstein series of weight zero at the cusp $\infty$ will be crucial in the subsequent sections. 
\begin{proposition}\label{Es1}
The Eisenstein series of weight zero at the cusp $\infty$ can be expressed as
\begin{equation}
\label{Eisenimp}
E_{\infty,0}(z,s)=\frac{1}{2}\frac{1}{\zeta(2s)}\frac{1}{1-p^{-2s}}\left[\sum_{(m,n)}^{\prime}\frac{y^s}{\vert p^2mz+n\vert^{2s}}-\sum_{(m,n)}^{\prime}\frac{y^s}{\vert p^2mz+pn\vert^{2s}}\right].
\end{equation}
Here, $\sum_{(m,n)}^{\prime}$ denote the summation over  $(m,n) \in \zz^2-\{(0,0)\}$. 
\end{proposition}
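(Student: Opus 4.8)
The plan is to start directly from the series definition of $E_{\infty,0}(z,s)$ and translate the quotient $\Ga_0(p^2)_\infty\backslash\Ga_0(p^2)$ into an explicit sum over lattice points. Since $\infty$ is the cusp in question I may take $\sigma_\infty$ to be the identity, and since $k=0$ the automorphic factor is trivial, so that
\[
E_{\infty,0}(z,s)=\sum_{\ga \in \Ga_0(p^2)_\infty \backslash \Ga_0(p^2)} \Im(\ga z)^s = \sum_{\ga \in \Ga_0(p^2)_\infty \backslash \Ga_0(p^2)} \frac{y^s}{|cz+d|^{2s}},
\]
where $(c,d)$ is the bottom row of $\ga$. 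First I would check that left multiplication by $\smmat{1}{m}{0}{1}$ fixes the bottom row while $-I$ negates it, so that the summand descends to the quotient and the cosets are parametrised by the bottom rows $(c,d)$ taken up to a global sign. Using $\ga\in\Sl_2(\zz)$ with $p^2\mid c$, the admissible bottom rows are exactly the pairs with $\gcd(c,d)=1$ and $p^2\mid c$ (the converse completion being supplied by B\'ezout); writing $c=p^2m$, this coprimality is equivalent to $\gcd(m,d)=1$ and $p\nmid d$. Accounting for the $\pm$ identification by a factor $\tfrac12$, this yields
\[
E_{\infty,0}(z,s)=\frac12 \sum_{\substack{\gcd(m,n)=1\\ p\nmid n}} \frac{y^s}{|p^2mz+n|^{2s}}=:\frac12\,S(z,s).
\]

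Next I would relate the two unrestricted lattice sums on the right-hand side to their primitive (coprime) counterparts. Factoring out $e=\gcd(m,n)$ and writing $(m,n)=e(m',n')$ with $(m',n')$ primitive pulls a factor $e^{-2s}$ out of each term, so that
\[
\sum_{(m,n)}^{\prime}\frac{y^s}{|p^2mz+n|^{2s}}=\zeta(2s)\,T(z,s),\qquad \sum_{(m,n)}^{\prime}\frac{y^s}{|p^2mz+pn|^{2s}}=\zeta(2s)\,U(z,s),
\]
where $T$ and $U$ denote the same sums restricted to $\gcd(m,n)=1$. After cancelling $\zeta(2s)$ against the prefactor, the identity to be proved reduces to the purely arithmetic statement
\[
S=\frac{1}{1-p^{-2s}}\,(T-U).
\]

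The heart of the argument is this last identity, which I would obtain by sorting the primitive sum $T$ according to divisibility by $p$. Splitting off the terms with $p\mid n$ gives $T=S+T_{p\mid n}$. In $T_{p\mid n}$ I substitute $n=pn'$; the coprimality $\gcd(m,pn')=1$ becomes $p\nmid m$ together with $\gcd(m,n')=1$, identifying $T_{p\mid n}$ with the part of $U$ having $p\nmid m$, i.e. $T_{p\mid n}=U-U_{p\mid m}$. Finally, in $U_{p\mid m}$ I substitute $m=pm'$; extracting $p^{2s}$ from the denominator (since $|p^3m'z+pn|^{2s}=p^{2s}|p^2m'z+n|^{2s}$) and noting that the surviving conditions are exactly $\gcd(m',n)=1$ and $p\nmid n$ shows that $U_{p\mid m}=p^{-2s}S$. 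Combining the three relations gives $T=S+U-p^{-2s}S$, hence $S(1-p^{-2s})=T-U$, which is the desired identity. I expect the main obstacle to be neither analytic nor deep but rather the careful bookkeeping of the coprimality and divisibility conditions as they transform under the substitutions $n=pn'$ and $m=pm'$; the structural observation that makes everything collapse is the self-similarity $U_{p\mid m}=p^{-2s}S$, which is precisely what produces the factor $(1-p^{-2s})^{-1}$.
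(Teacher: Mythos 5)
Your proposal is correct and follows essentially the same route as the paper: both reduce, via the $\zeta(2s)$ factorization into primitive sums, to the identity $2(1-p^{-2s})E_{\infty,0}=T-U$, and both establish it by the same three divisibility decompositions (splitting off $p\mid n$ in $T$, matching it with the $(m,pn)=1$ part of $U$, and recognizing the $p\mid m$ part of $U$ as $p^{-2s}$ times the coset sum). Your packaging of the last step as the self-similarity $U_{p\mid m}=p^{-2s}S$ is a slightly tidier presentation of the identical computation the paper carries out by expanding both sides and cancelling.
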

\begin{proof}
For $t \in \{1,p\}$, notice that 
\begin{align*}
\sum_{(m,n)}^{\prime}\frac{y^s}{\vert p^2mz+tn\vert^{2s}}=\sum_{d=1}^{\infty} \sum_{(m,n)\in \zz^2; (m,n)=d}^{\prime}\frac{y^s}{\vert p^2mz+tn\vert^{2s}}\\
=\sum_{d=1}^{\infty} \sum_{(m',n')\in \zz^2; (m',n')=1}^{\prime}\frac{y^s}{\vert p^2dm'z+td n'\vert^{2s}}\\
=\zeta(2s)\left(\sum_{(m,n)=1} \frac{y^s}{\vert p^2 mz +tn \vert^{2s}}\right).
\end{align*}
In other words, Equation~\eqref{Eisenimp} is equivalent to
\begin{equation}\label{e1}
2(1-p^{-2s})E_{\infty,0}(z,s)=\sum_{(m,n)=1} \frac{y^s}{\vert p^2 mz +n \vert^{2s}} - \sum_{(m,n)=1} \frac{y^s}{\vert p^2 mz + pn \vert^{2s}}.
\end{equation}
The left hand side of (\ref{e1}) is equal to
\begin{equation}\label{lhs}
\begin{aligned}
2(1-p^{-2s}) \sum_{\ga \in \Ga_0(p^2)_{\infty}\backslash \Ga_0(p^2)} \Im(\ga z)^s
  & = 2(1-p^{-2s}) \frac{1}{2}\sum_{\substack{(m,n)=1,\\ m\equiv 0(p^2)}} \frac{y^s}{\vert mz +n \vert^{2s}}\\
  & = \sum_{\substack{(m,n)=1,\\ m\equiv 0(p^2)}} \frac{y^s}{\vert mz +n \vert^{2s}}-p^{-2s}\sum_{\substack{(m,n)=1,\\ m\equiv 0(p^2)}} \frac{y^s}{\vert mz +n \vert^{2s}}\\
  & = \sum_{\substack{(m,n)=1,\\ m\equiv 0(p^2)}} \frac{y^s}{\vert mz +n \vert^{2s}}
    -\sum_{\substack{(m,n)=1,\\ m\equiv 0(p^2)}} \frac{y^s}{\vert pmz + pn \vert^{2s}}\\
  & = \sum_{(p^2m,n)=1}\frac{y^s}{\vert p^2mz +n \vert^{2s}}-\sum_{(p^2m,n)=1}\frac{y^s}{\vert p^3mz + pn \vert^{2s}}.
\end{aligned}
\end{equation}
The first term in the right hand side of (\ref{e1}) is equal to
\begin{align*}
& \sum_{(m,n)=1, p\not |n}\frac{y^s}{\vert p^2 mz +n \vert^{2s}} + \sum_{(m,n)=1, p |n}\frac{y^s}{\vert p^2 mz +n \vert^{2s}}\\
=&\sum_{(m,n)=1, p\not |n}\frac{y^s}{\vert p^2 mz +n \vert^{2s}}+\sum_{(m,pn)=1}\frac{y^s}{\vert p^2 mz + pn \vert^{2s}},
\end{align*}
and the second term in the right hand side of (\ref{e1}) is equal to
\begin{align*}
 & \sum_{\substack{(m,n)=1,\\(m,pn)=1}} \frac{y^s}{\vert p^2 mz
    + pn \vert^{2s}}+\sum_{\substack{(m,n)=1,\\ (m,pn)=p}} \frac{y^s}{\vert p^2 mz + pn \vert^{2s}}\\
  =& \sum_{\substack{(m,n)=1,\\(m,pn)=1}} \frac{y^s}{\vert p^2 mz + pn \vert^{2s}} 
    + \sum_{\substack{(pm,n)=1, \\ (pm,pn)=p}} \frac{y^s}{\vert p^3 mz + pn \vert^{2s}}\\
=  & \sum_{\substack{(m,n)=1,\\ (m,pn)=1}} \frac{y^s}{\vert p^2 mz + pn \vert^{2s}} 
    + \sum_{\substack{(pm,n)=1, \\ (m,n)=1}} \frac{y^s}{\vert p^3 mz + pn \vert^{2s}}.
\end{align*}
Therefore the right hand side of (\ref{e1}) is
\begin{equation}\label{rhs}
\sum_{(m,n)=1, p\not |n}\frac{y^s}{\vert p^2 mz+n\vert^{2s}}-\sum_{(pm,n)=1,(m,n)=1}\frac{y^s}{\vert p^3 mz + pn \vert^{2s}}.
\end{equation}
To complete the proof using \eqref{lhs} and \eqref{rhs}, we only need to observe that $(p^2m,n)=1$ if and only if $(m,n)=1$ and $p\not | n$, so that
\[
\sum_{(p^2m,n)=1}\frac{y^s}{\vert p^2mz +n \vert^{2s}}=\sum_{(m,n)=1, p\not |n}\frac{y^s}{\vert p^2 mz+n\vert^{2s}};
\]
and similarly $(p^2m,n)=1$ if and only if $(pm,n)=1$ and $(m,n)=1$, so that
\[
\sum_{(p^2m,n)=1}\frac{y^s}{\vert p^3mz + pn \vert^{2s}}=\sum_{(pm,n)=1,(m,n)=1}\frac{y^s}{\vert p^3 mz + pn \vert^{2s}}.
\]
\end{proof}

\subsubsection{Computation of $\mathcal{C}_{\infty, \infty}^{\Gamma_0(p^2)}$ and $\mathcal{C}_{\infty,0}^{\Gamma_0(p^2)}$}

In this section, we compute the terms  $\mathcal{C}_{\infty, \infty}^{\Gamma_0(p^2)}$ and $\mathcal{C}_{\infty,0}^{\Gamma_0(p^2)}$ that appear in equation~\ref{gcanimp}. 
To do the same, we expand the constant terms of the Eisenstein series $\phi_{\infty,\infty}^{\Gamma_0(p^2)}(s)$ and $\phi_{\infty,0}^{\Gamma_0(p^2)}(s)$ as defined above. The below computations are inspired by \cite{Keil}.

\begin{lemma} \label{phiinfty}
The Laurent series expansion of $\phi_{\infty,\infty}^{\Gamma_0(p^2)}(s)$ at $s=1$ is given by
\begin{equation*}
\phi_{\infty,\infty}^{\Gamma_0(p^2)}(s)=\frac{1}{v_{\Ga_0(p^2)}} \frac{1}{s-1}+\frac{1}{v_{\Ga_0(p^2)}}\left(2\gamma_{EM} +\frac{a\pi}{6} -\frac{(2p^2-1)\log(p^2)}{p^2-1}\right)+O(s-1),
\end{equation*}
where $\gamma_{EM}$ is the Euler-Mascheroni constant and $a$ is the derivative of $\sqrt{\pi}\frac{\Ga(s-\frac{1}{2})}{\Ga(s) \zeta(2s)}$
at $s=1$.
\end{lemma}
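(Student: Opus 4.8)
The plan is to compute the Fourier constant term $\phi_{\infty,\infty}^{\Gamma_0(p^2)}(s)$ explicitly from its definition as a Dirichlet series built from the Kloosterman sums $S_{\infty,\infty}(0,0;c)$, and then extract the Laurent expansion at $s=1$. Recall that
\[
\phi_{\infty,\infty}^{\Gamma_0(p^2)}(s) = \sqrt{\pi}\,\frac{\Gamma(s-\tfrac12)}{\Gamma(s)}\sum_{c=1}^{\infty} c^{-2s} S_{\infty,\infty}(0,0;c),
\]
so everything reduces to identifying the coefficients $S_{\infty,\infty}(0,0;c)$ and summing the resulting Dirichlet series in closed form. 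For the cusp $\infty$ with itself, the relevant $c$ are exactly those with $c \equiv 0 \pmod{p^2}$, and $S_{\infty,\infty}(0,0;c)$ counts residues (it is essentially a Ramanujan-type sum evaluated at $0$, hence equals $\varphi$-like arithmetic factors). The cleaner route, and the one I would actually take, is to read off the constant term directly from Proposition~\ref{Es1}: since that proposition expresses $E_{\infty,0}(z,s)$ as an explicit combination of two real-analytic Epstein-type lattice sums divided by $\zeta(2s)(1-p^{-2s})$, I can compute the $y^{1-s}$ coefficient of its Fourier expansion at $\infty$ by using the known constant term of the classical Eisenstein series $\sum_{(m,n)=1} y^s/|mz+n|^{2s}$, whose $\phi$-factor is the familiar $\sqrt{\pi}\,\tfrac{\Gamma(s-1/2)}{\Gamma(s)}\tfrac{\zeta(2s-1)}{\zeta(2s)}$.

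\emph{Carrying this out,} I would first substitute $z \mapsto \sigma_\infty(z)$ (which is the identity here since $\infty$ is already in standard position) and collect the non-oscillating terms. The two lattice sums in \eqref{Eisenimp}, after scaling the lattice by $p^2$ and by $p^2/p$ respectively, each contribute a term proportional to $y^{1-s}$ coming from the functional-equation part of the associated completed Epstein zeta function. The upshot is that $\phi_{\infty,\infty}^{\Gamma_0(p^2)}(s)$ equals $\sqrt{\pi}\,\tfrac{\Gamma(s-1/2)}{\Gamma(s)}\tfrac{\zeta(2s-1)}{\zeta(2s)}$ times an explicit rational function of $p^{-s}$ (arising from the $p^{-2s}$-twisted scalings and the $(1-p^{-2s})^{-1}$ prefactor). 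I would then expand each factor at $s=1$: the gamma-and-zeta prefactor is the quantity whose derivative at $s=1$ is named $a$, so it contributes $\tfrac{1}{v_{\Gamma_0(p^2)}(s-1)} + \tfrac{1}{v_{\Gamma_0(p^2)}}(2\gamma_{EM} + \tfrac{a\pi}{6}) + \cdots$ once the residue $v_{\Gamma_0(p^2)}^{-1}$ and the Euler--Mascheroni constant from $\zeta(2s-1)$ near its pole are accounted for; the rational $p$-factor is holomorphic at $s=1$, and its logarithmic derivative there produces the term $-\tfrac{(2p^2-1)\log(p^2)}{p^2-1}$.

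\emph{The main obstacle} will be bookkeeping the $p$-dependent constant cleanly: I must track the residue of $\zeta(2s-1)$ at $s=1$ (which supplies both the pole and, via its $\gamma_{EM}$ term, part of the constant), differentiate the elementary rational function $1/(1-p^{-2s})$ and the scaling factors $p^{-2s}$, $p^{-2(2s-1)}$ at $s=1$, and verify that all the $\log p$ contributions assemble into exactly $-(2p^2-1)\log(p^2)/(p^2-1)$ with the correct sign. Concretely I would write the $p$-factor as a single expression $R(s)$ with $R(1)$ normalizing the residue to $v_{\Gamma_0(p^2)}^{-1}$ (using $v_{\Gamma_0(p^2)} = \tfrac{\pi}{3}[\SL_2(\Z):\Gamma_0(p^2)] = \tfrac{\pi}{3}p^2(1+1/p)$), and then the constant term is $R(1)^{-1}$ times $(2\gamma_{EM} + \tfrac{a\pi}{6}) + R'(1)/R(1)$; a short computation of $R'(1)/R(1)$ should yield the stated $\log(p^2)$ coefficient. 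The only real care needed is to separate the genuinely new $p$-contribution from the universal constant $2\gamma_{EM} + \tfrac{a\pi}{6}$, which matches the $N=1$ (i.e. $\SL_2(\Z)$) normalization and is where the factorization style of Abbes--Ullmo's Proposition~3.2.2 (generalized in Proposition~\ref{Es1}) does the essential work.
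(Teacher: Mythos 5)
Your argument is correct, and the first half of it takes a genuinely different route from the paper. The paper proves Lemma~\ref{phiinfty} by computing the Kloosterman sums $S_{\infty,\infty}(0,0;c)$ directly from Iwaniec's description (they vanish unless $p^2\mid c$, and equal $\phi(c)$ otherwise), writing $c=p^{k+2}n$ with $p\nmid n$, and summing the resulting Dirichlet series to reach the closed form \eqref{phi-inf-inf-1st-form}. You instead bypass the Kloosterman sums entirely and read the $y^{1-s}$ Fourier coefficient off Proposition~\ref{Es1}: scaling the two lattice sums in \eqref{Eisenimp} (the first is $p^{-2s}$ times the full lattice sum in $p^2z$, the second $p^{-4s}$ times the one in $pz$) and using the classical constant term $2\zeta(2s)y^s+2\sqrt{\pi}\,\tfrac{\Gamma(s-1/2)}{\Gamma(s)}\zeta(2s-1)y^{1-s}$ gives a $y^{1-s}$ coefficient of $\sqrt{\pi}\,\tfrac{\Gamma(s-1/2)}{\Gamma(s)\zeta(2s)}\zeta(2s-1)\cdot\tfrac{p^{2-4s}-p^{1-4s}}{1-p^{-2s}}$, and the $p$-factor simplifies to exactly $\tfrac{p(p-1)}{p^{2s}(p^{2s}-1)}$ of \eqref{phi-inf-inf-1st-form} (with the bonus consistency check that the $y^s$ coefficient comes out to $\delta_{\infty,\infty}=1$). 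From that point on your Laurent expansion is the same as the paper's: the universal factor contributes the residue $1/v_{\Gamma_0(p^2)}$ and the constant $2\gamma_{EM}+\tfrac{a\pi}{6}$, while the logarithmic derivative of the $p$-factor, $-2\log p-\tfrac{2p^2\log p}{p^2-1}=-\tfrac{(2p^2-1)\log(p^2)}{p^2-1}$, supplies the remaining term. Your route buys independence from the explicit Kloosterman-sum evaluation at the price of invoking Proposition~\ref{Es1} and the classical Epstein constant term; the paper's route is more self-contained at this point since Proposition~\ref{Es1} is only exploited later for the zeta-function decomposition. One small slip to fix when you write it up: the constant term is $\tfrac{3R(1)}{\pi}\bigl[(2\gamma_{EM}+\tfrac{a\pi}{6})+R'(1)/R(1)\bigr]=\tfrac{1}{v_{\Gamma_0(p^2)}}\bigl[\cdots\bigr]$, i.e.\ it carries the factor $R(1)$ (normalized by $3/\pi$), not $R(1)^{-1}$ as stated in your sketch.
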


\begin{proof}
From \cite[page 48]{MR1942691}, we compute
\[
S_{\infty, \infty}(0,0;c)=\left|\{ d\pmod c | \left(\begin{smallmatrix}
a & b\\
c  & d
\end{smallmatrix}\right) \in \Ga_0(p^2)\}\right|
= \begin{cases}
0 &  \text{if $ p^2 \nmid c$} ,\\
\phi(c) &  \text{if $p^2 \mid c$}. \\
\end{cases}
\]
Writing $c$ in the form $c=p^{k+2}n$ where $k\geq 0$ and $p \nmid n$,
\[
\phi(c)=\phi(p^{k+2}n)=\phi(p^{k+2})\phi(n)=(p-1)p^{k+1}\phi(n).
\]
Therefore,
\begin{align*}
\sum_{c=1}^{\infty} c^{-2s}S_{\infty,\infty}(0,0;c) &=\sum_{n=1, p\nmid n}^{\infty}\sum_{k=0}^{\infty}(p^{k+2}n)^{-2s}(p-1)p^{k+1}\phi(n)\\
&=p^{-4s+1}(p-1)\sum_{n=1, p\nmid n}^{\infty} n^{-2s}\phi(n)\sum_{k=0}^{\infty} p^{(-2s+1)k}\\
& = p^{-4s+1}(p-1) \left(\frac{\zeta(2s-1)}{\zeta(2s)}\frac{p^{2s}-p}{p^{2s}-1}\right) \left(\frac{1}{1-p^{-2s+1}}\right)\\
&=\frac{p(p-1)}{p^{2s}(p^{2s}-1)}\frac{\zeta(2s-1)}{\zeta(2s)}.
\end{align*}
Hence, we deduce that
\begin{equation}\label{phi-inf-inf-1st-form}
\phi_{\infty,\infty}^{\Gamma_0(p^2)}(s)=\big(p(p-1)\big)\left(\frac{1}{p^{2s}(p^{2s}-1)} \right) \left(\sqrt{\pi}\frac{\Ga(s-\frac{1}{2})}{\Ga(s) \zeta(2s)} \right) \zeta(2s-1).
\end{equation}
The second factor is holomorphic at $s=1$ and has the Taylor series expansion
\begin{equation}\label{TS-p}
\frac{1}{p^{2s}(p^{2s}-1)}= \frac{1}{p^2(p^2-1)} - \frac{ (2p^2-1) \log (p^2) }{p^2(p^2-1)^2}(s-1) + O\big((s-1)^2\big).
\end{equation}
The third factor is holomorphic as well at $s=1$ and has the Taylor series expansion
\begin{equation}\label{TS-Ga}
\sqrt{\pi}\frac{\Ga(s-\frac{1}{2})}{\Ga(s) \zeta(2s)} = \frac{6}{\pi}+a(s-1)+O\big((s-1)^2\big),
\end{equation}
Finally, the Riemann zeta function $\zeta(2s-1)$ is meromorphic at $s=1$ with the Laurent series expansion
\begin{equation}\label{LS-z}
\zeta(2s-1)=\frac{1}{2(s-1)}+\gamma_{EM}+O(s-1).
\end{equation}
Multiplying these expansions, we see that
\[
\phi_{\infty,\infty}^{\Gamma_0(p^2)}(s)=\left(\frac{1}{p(p+1)} \frac{6}{\pi} \frac{1}{2}\right)\frac{1}{s-1} + \left(\frac{1}{p(p+1)} \frac{6}{\pi} \gamma_{EM}+ \frac{1}{p(p+1)} a \frac{1}{2} -\frac{(2p^2-1)\log (p^2)}{p(p+1)(p^2-1)} \frac{6}{\pi}\frac{1}{2} \right)+ O(s-1).
\]
This equation gives the result observing that $v_{\Gamma_0(p^2)}=\frac{\pi}{3}p(p+1)$.
\end{proof}
\begin{corollary}
\label{Secondterminfty}
The constant term in the Laurent series expansion at $s=1$ of the Eisenstein series $E_{\infty,0}$ at the cusp $\infty$ is given by
\[
\mathcal{C}_{\infty, \infty}^{\Gamma_0(p^2)}=\frac{1}{v_{\Ga_0(p^2)}}\left(2\gamma_{EM} +\frac{a\pi}{6} -\frac{(2p^2-1)\log (p^2)}{p^2-1}\right).
\]
\end{corollary}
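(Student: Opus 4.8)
The plan is to derive the corollary directly from Lemma~\ref{phiinfty} together with the definition \eqref{C-Ga} of the constant $\mathcal{C}_{\infty,\infty}^{\Gamma_0(p^2)}$. By definition, this quantity is the constant Laurent coefficient of $\phi_{\infty,\infty}^{\Gamma_0(p^2)}(s)$ at $s=1$, obtained by subtracting the principal part $\frac{1}{v_{\Ga_0(p^2)}(s-1)}$ and then letting $s\to 1$. First I would recall from the Fourier expansion \eqref{klf} with $P=Q=\infty$ (so that $\delta_{P,Q}=1$) that $\phi_{\infty,\infty}^{\Gamma_0(p^2)}(s)$ is precisely the coefficient of $y^{1-s}$ in the zeroth Fourier coefficient of $E_{\infty,0}(\sigma_\infty(z),s)$; this identifies the ``constant term of the Eisenstein series at the cusp $\infty$'' in the statement with the object whose Laurent expansion Lemma~\ref{phiinfty} computes. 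This identification is the only conceptual point, and it is immediate once one unwinds the notation.

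The remaining step is a one-line computation. From the Laurent expansion
\[
\phi_{\infty,\infty}^{\Gamma_0(p^2)}(s)=\frac{1}{v_{\Ga_0(p^2)}} \frac{1}{s-1}+\frac{1}{v_{\Ga_0(p^2)}}\left(2\gamma_{EM} +\frac{a\pi}{6} -\frac{(2p^2-1)\log(p^2)}{p^2-1}\right)+O(s-1)
\]
supplied by Lemma~\ref{phiinfty}, subtracting the pole term $\frac{1}{v_{\Ga_0(p^2)}(s-1)}$ and passing to the limit $s\to 1$ yields exactly the claimed value, since the $O(s-1)$ remainder vanishes. No genuine obstacle arises here: all the analytic work (the evaluation of the Kloosterman sum $S_{\infty,\infty}(0,0;c)$, the Dirichlet-series manipulation producing $\zeta(2s-1)/\zeta(2s)$, and the multiplication of the three Taylor/Laurent expansions \eqref{TS-p}, \eqref{TS-Ga}, \eqref{LS-z}) is already carried out in the proof of Lemma~\ref{phiinfty}. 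The only point worth double-checking is the bookkeeping of constants: confirming that $v_{\Gamma_0(p^2)}=\tfrac{\pi}{3}p(p+1)$ is used consistently, so that the prefactors $\tfrac{1}{p(p+1)}\cdot\tfrac{6}{\pi}\cdot\tfrac12$ collapse to $\tfrac{1}{v_{\Ga_0(p^2)}}$, and that the $\tfrac{a\pi}{6}$ and $2\gamma_{EM}$ contributions are correctly grouped inside the common factor $\tfrac{1}{v_{\Ga_0(p^2)}}$. In short, the corollary is a direct reading-off of the constant Laurent coefficient from the lemma, so I would present it as an immediate consequence rather than a separate argument.
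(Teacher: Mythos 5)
Your proposal is correct and coincides with the paper's treatment: the corollary is stated immediately after Lemma~\ref{phiinfty} precisely as the reading-off of the constant Laurent coefficient via the definition \eqref{C-Ga}, exactly as you describe. The identification of $\mathcal{C}_{\infty,\infty}^{\Gamma_0(p^2)}$ with the constant term of $\phi_{\infty,\infty}^{\Gamma_0(p^2)}(s)$ and the subtraction of the pole $\frac{1}{v_{\Ga_0(p^2)}(s-1)}$ is all that is needed, and no further argument is required.
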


\begin{lemma}\label{lem-phi-inf-0}
The Laurent series expansion of $\phi_{\infty,0}^{\Ga_0(p^2)}(s)$ at $s=1$ is
\begin{equation}\label{phi-inf-0}
\phi_{\infty,0}^{\Ga_0(p^2)}(s)=\frac{1}{v_{\Ga_0(p^2)}}\frac{1}{s-1} + \frac{1}{v_{\Ga_0(p^2)}} \left(2\gamma_{EM} +\frac{a\pi}{6} -\frac{(p^2-p-1)}{p^2-1} \log (p^2)\right) +O(s-1);
\end{equation}
where $\gamma_{EM}$ and $a$ are as in Lemma \ref{phiinfty}.
\end{lemma}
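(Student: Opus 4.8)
The plan is to follow the same route as in the proof of Lemma~\ref{phiinfty}, the only genuinely new ingredient being the Kloosterman sum attached to the ordered pair of cusps $(\infty,0)$ instead of $(\infty,\infty)$. By the formula for the constant term of the Eisenstein series,
\[
\phi_{\infty,0}^{\Ga_0(p^2)}(s)=\sqrt{\pi}\,\frac{\Gamma(s-\frac{1}{2})}{\Gamma(s)}\sum_{c>0}c^{-2s}S_{\infty,0}(0,0;c),
\]
so once the Dirichlet series on the right is put in closed form, the expansion at $s=1$ is obtained by multiplying three elementary expansions exactly as before.

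First I would fix a scaling matrix for the cusp $0$. Since the cusp $0$ of $\Ga_0(p^2)$ has width $p^2$, I take $\sigma_0=\smmat{0}{-1/p}{p}{0}$ and check that $\sigma_0(\infty)=0$ together with $\sigma_0^{-1}\Ga_0(p^2)_0\sigma_0=\Ga_0(p^2)_\infty$. For $\ga=\smmat{a}{b}{c}{d}\in\Ga_0(p^2)$ the product $\sigma_\infty^{-1}\ga\sigma_0=\ga\sigma_0=\smmat{bp}{-a/p}{dp}{-c/p}$ has lower-left entry $dp$, so the admissible moduli are $c=dp$. Because $p^2\mid c$ forces $ad\equiv 1\pmod{p^2}$, the integer $d$ is automatically prime to $p$, and conversely every such $d$ occurs. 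The main work is then to count the double cosets in $\Ga_0(p^2)_\infty\backslash\,\ga\sigma_0\,/\Ga_0(p^2)_\infty$ with fixed modulus $dp$: parametrizing a coset by the residues of the top-left and bottom-right entries modulo $dp$ and imposing $\det=1$, the count reduces to the number of units $b\bmod d$, giving
\[
S_{\infty,0}(0,0;c)=\begin{cases}\phi(d)&\text{if }c=dp,\ p\nmid d,\\0&\text{otherwise.}\end{cases}
\]
I expect this counting step, carried out with the non-integral scaling matrix $\sigma_0$, to be the main obstacle, since one must keep careful track of the allowed moduli and verify that the parametrization by residues is a bijection onto the admissible pairs.

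With the Kloosterman sum in hand, the Dirichlet series factors as
\[
\sum_{c>0}c^{-2s}S_{\infty,0}(0,0;c)=p^{-2s}\!\!\sum_{\substack{d\ge1\\ p\nmid d}}d^{-2s}\phi(d)=p^{-2s}\,\frac{1-p^{1-2s}}{1-p^{-2s}}\,\frac{\zeta(2s-1)}{\zeta(2s)},
\]
where the last equality uses $\sum_{d\ge1}d^{-2s}\phi(d)=\zeta(2s-1)/\zeta(2s)$ together with the removal of the Euler factor at $p$. Hence $\phi_{\infty,0}^{\Ga_0(p^2)}(s)=g_0(s)\cdot\bigl(\sqrt{\pi}\,\Gamma(s-\frac{1}{2})/(\Gamma(s)\zeta(2s))\bigr)\cdot\zeta(2s-1)$ with $g_0(s)=p^{-2s}(1-p^{1-2s})/(1-p^{-2s})$, in perfect analogy with \eqref{phi-inf-inf-1st-form}.

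Finally I would expand each factor at $s=1$. A direct computation gives $g_0(1)=\frac{1}{p(p+1)}$, and taking the logarithmic derivative gives $g_0(s)=\frac{1}{p(p+1)}\bigl(1-\frac{(p^2-p-1)}{p^2-1}\log(p^2)\,(s-1)+O((s-1)^2)\bigr)$; this replaces \eqref{TS-p}. Reusing \eqref{TS-Ga} and \eqref{LS-z} verbatim for the remaining two factors and multiplying out, the pole term reproduces the residue $v_{\Ga_0(p^2)}^{-1}(s-1)^{-1}$, while collecting the constant term and substituting $v_{\Ga_0(p^2)}=\frac{\pi}{3}p(p+1)$ yields exactly \eqref{phi-inf-0}. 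This last step is the same bookkeeping as at the end of Lemma~\ref{phiinfty}.
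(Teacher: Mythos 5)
Your proposal is correct and follows essentially the same route as the paper: the same scaling matrix $\sigma_0=\smmat{0}{-1/p}{p}{0}$, the same evaluation $S_{\infty,0}(0,0;c)=\phi(n)$ for $c=pn$ with $p\nmid n$ (and $0$ otherwise), and the same resulting Dirichlet series $p^{-2s}\frac{p^{2s}-p}{p^{2s}-1}\frac{\zeta(2s-1)}{\zeta(2s)}$. The only cosmetic difference is that the paper factors out $\phi_{\infty,\infty}^{\Ga_0(p^2)}(s)$ and expands $p^{2s}-p$ separately, while you expand the combined factor $g_0(s)$ via its logarithmic derivative; the arithmetic agrees in both cases.
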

\begin{proof}
Let $\sigma_0$ be the scaling matrix of the cusp $0$ defined by
\begin{equation}\label{scaling-0}
\sigma_0^{-1}=\frac{1}{\sqrt{p^2}}W_{p^2} \in \SL_2(\mathbf{R}),
\end{equation}
where $W_{p^2} = \left(\begin{smallmatrix}0 & 1\\ -p^2 & 0\end{smallmatrix}\right)$ is the Atkin-Lehner involution. For the cusp $\infty$, we take $\sigma_{\infty}=I$ as a scaling matrix. From \cite[page 48]{MR1942691}, we then have
\begin{align*}
S_{\infty,0}(0,0;c) & =\left\vert \left\{ d\pmod c | \left(\begin{smallmatrix}
a & b\\
c  & d
\end{smallmatrix}\right) = \left(\begin{smallmatrix}pb^{\prime} & -a^{\prime}/p \\ pd^{\prime} & -pc^{\prime}\end{smallmatrix}\right), a^{\prime},b^{\prime},c^{\prime},d^{\prime} \in \mathbf{Z},  a^{\prime}d^{\prime}-b^{\prime}c^{\prime}p^2=1\right\} \right\vert\\
& = \begin{cases}
0 &  \text{if $ p \nmid c$ or $p^2 \mid c$},\\
\phi(n) &  \text{if $c=pn$ with $p\nmid n$}.
\end{cases}
\end{align*}
Therefore, we obtain
\[
\sum_{c} c^{-2s} S_{\infty, 0}(0,0;c) = \sum_{n=1, p \nmid n}^{\infty} p^{-2s}n^{-2s} \phi(n)
=\frac{1}{p^{2s}}\frac{\zeta(2s-1)}{\zeta(2s)}\frac{p^{2s}-p}{p^{2s}-1}.
\]
Thus, we deduce that
\[
\phi_{\infty, 0}^{\Ga_0(p^2)}(s)=(p^{2s}-p) \left(\frac{1}{p^{2s}(p^{2s}-1)}\right) \left(\sqrt{\pi} \frac{\Gamma(s-\frac{1}{2})}{\Gamma(s)\zeta(2s)}\right) \zeta(2s-1) = \frac{1}{p(p-1)} (p^{2s}-p) \phi_{\infty, \infty}^{\Ga_0(p^2)}(s),
\]
using \eqref{phi-inf-inf-1st-form}. The function $p^{2s}-p$ is holomorphic near $s=1$ and has the Taylor series expansion
\begin{equation}\label{TS-p2}
p^{2s}-p=(p^2-p)+(p^2\log (p^2) )(s-1) +O\big((s-1)^2\big).
\end{equation}
Combining this with Lemma \ref{phiinfty} yields (\ref{phi-inf-0}).
\end{proof}

By Lemma \ref{lem-phi-inf-0}, we compute the constant term of the Eisenstein series:
\begin{corollary}
\label{Secondterm}
The constant term of the Eisenstein series $E_{\infty,0}$ at the cusp $0$ is given by
\[
\mathcal{C}_{\infty,0}^{\Gamma_0(p^2)}=\frac{1 }{v_{\Ga_0(p^2)}} \left(2\gamma_{EM} +\frac{a\pi}{6} -\frac{(p^2-p-1)}{p^2-1} \log (p^2)\right).
\]
\end{corollary}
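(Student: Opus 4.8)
The plan is to read the asserted value directly off the Laurent expansion established in Lemma~\ref{lem-phi-inf-0}, since the quantity $\mathcal{C}_{\infty,0}^{\Gamma_0(p^2)}$ is by construction nothing but the constant term of that expansion. First I would recall the definition~\eqref{C-Ga}: the constant $\mathcal{C}_{\infty,0}^{\Gamma_0(p^2)}$ is the limit of $\phi_{\infty,0}^{\Ga_0(p^2)}(s)-\frac{1}{v_{\Ga_0(p^2)}}\frac{1}{s-1}$ as $s\to 1$. By the Fourier expansion~\eqref{klf} at the cusp $0$, the coefficient $\phi_{\infty,0}^{\Ga_0(p^2)}(s)$ is precisely the $y^{1-s}$-term in the zeroth Fourier mode of $E_{\infty,0}(\sigma_0(z),s)$; the $\de_{\infty,0}y^{s}$ contribution drops out because the cusps $\infty$ and $0$ are inequivalent, so $\de_{\infty,0}=0$. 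Thus the constant term of the Eisenstein series at $0$ is governed exactly by the Laurent behaviour of $\phi_{\infty,0}^{\Ga_0(p^2)}(s)$ at $s=1$, and the argument mirrors that of Corollary~\ref{Secondterminfty} for the cusp $\infty$.

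Next I would invoke Lemma~\ref{lem-phi-inf-0}, which states
\[
\phi_{\infty,0}^{\Ga_0(p^2)}(s)=\frac{1}{v_{\Ga_0(p^2)}}\frac{1}{s-1} + \frac{1}{v_{\Ga_0(p^2)}} \left(2\gamma_{EM} +\frac{a\pi}{6} -\frac{(p^2-p-1)}{p^2-1} \log (p^2)\right) +O(s-1).
\]
Subtracting the principal part $\frac{1}{v_{\Ga_0(p^2)}(s-1)}$ and letting $s\to1$ leaves exactly the bracketed expression divided by $v_{\Ga_0(p^2)}$, which is the claimed formula for $\mathcal{C}_{\infty,0}^{\Gamma_0(p^2)}$. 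This completes the argument.

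There is essentially no obstacle at the level of the corollary itself; its entire content sits in Lemma~\ref{lem-phi-inf-0}. Were I to prove that lemma from scratch rather than assume it, the genuinely delicate steps would be: (i) the evaluation of the Kloosterman sum $S_{\infty,0}(0,0;c)$ using the Atkin--Lehner scaling matrix $\sigma_0=\frac{1}{\sqrt{p^2}}W_{p^2}$, which forces the congruence constraints $p\mid c$, $p^2\nmid c$ and yields $\phi(n)$ when $c=pn$ with $p\nmid n$; and (ii) the bookkeeping of combining the Taylor expansion of $p^{2s}-p$ and of $\frac{1}{p^{2s}(p^{2s}-1)}$ with the Laurent expansion of $\zeta(2s-1)$ and the $\frac{6}{\pi}+a(s-1)$ expansion of $\sqrt{\pi}\,\Gamma(s-\tfrac12)/(\Gamma(s)\zeta(2s))$, while tracking the $\log(p^2)$ contributions that produce the coefficient $\frac{p^2-p-1}{p^2-1}$. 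Granting the lemma, the corollary is immediate.
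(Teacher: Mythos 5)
Your proposal is correct and matches the paper's own (one-line) argument: the corollary is obtained by reading the constant term directly off the Laurent expansion of $\phi_{\infty,0}^{\Ga_0(p^2)}(s)$ in Lemma~\ref{lem-phi-inf-0}, using the definition~\eqref{C-Ga} of $\mathcal{C}_{\infty,0}^{\Gamma_0(p^2)}$. Nothing further is needed.
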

\subsection{Relation between Green's function and Eisenstein series}
Let $G_s(z,w)$ be the {\it automorphic} Green's function 
\cite[p. 4]{MR1437298}. 
The {\it automorphic} Green's function and the Eisenstein series are related by the following equation~\cite[Proposition E, p. 5]{MR1437298}
\begin{align*}
  \Gcan(\infty,0) = & -2\pi \lim_{s \to 1} \left( \phi_{\infty, 0}^{\Ga_0(p^2)}(s)-\frac{1}{v_{\Ga_0(p^2)}}\frac{1}{s-1}\right) 
  -\frac{2\pi}{v_{\Ga_0(p^2)}}\\
    & + 2\pi\lim_{s \to 1} \left(\frac{1}{v_{\Ga_0(p^2)}}\frac{1}{s(s-1)}+\int_{X_0(p^2) \times X_0(p^2)} G_s (z,w)\mu_{\text{can}}(z)     
  \mu_{\text{can}}(w) \right)\\
    & +2\pi \lim_{s \to 1} \left(\int_{X_0(p^2)} E_{\infty,0}(z,s) \mu_{\text{can}}(z) + \int_{X_0(p^2)} E_{0,0}(z,s)   
  \mu_{\text{can}}(z) -\frac{2}{v_{\Ga_0(p^2)}} \frac{1}{s-1} \right).
\end{align*}
For brevity, we write
\begin{equation}\label{R-Ga}
R_{\infty}^{\Gamma_0(p^2)}=\frac{1}{2} \lim_{s \to 1} \left(\int_{X_0(p^2)} E_{\infty,0}(z,s) \mu_{\text{can}}(z) + \int_{X_0(p^2)} E_{0,0}(z,s) \mu_{\text{can}}(z) -\frac{2}{v_{\Ga_0(p^2)}} \frac{1}{s-1} \right). 
\end{equation}
We will show in  Section~\ref{Essterm} that $R_{\infty}^{\Gamma_0(p^2)}$ as defined above coincides with $\mathcal{R}_{\infty}^{\Gamma_0(p^2)}$ that appears in Theorem~\ref{RSconst}. 
From \cite[Proposition 4.1.2, p. 65]{Grados:Thesis},  we know that
\[
2\pi\lim_{s \to 1} \left(\frac{1}{v_{\Ga_0(p^2)}}\frac{1}{s(s-1)}+\int_{X_0(p^2) \times X_0(p^2)} G_s(z,w) \mu_{\text{can}}(z) \mu_{\text{can}}(w) \right)=O\left(\frac{1}{g_{p^2}}\right).
\]
Note that $G_s(z,w)=-G_s^{\Ga_0(p^2)}(z,w)$ in loc. cit. 
The key inputs in the above important estimate are results of Jorgenson-Kramer (\cite[Lemma 3.7, p. 690]{MR2231197} and \cite{MR2521110}) on the constant term of the logarithmic derivate of the Selberg 
zeta function for varying congruence subgroups. 

Since $\frac{g_{p^2}}{v_{\Ga_0(p^2)}}=O(1)$,  we have
\begin{equation}\label{gcanimp}
\Gcan(\infty,0)= -2\pi\mathcal{C}_{\infty,0}^{\Gamma_0(p^2)} +4\pi R_{\infty}^{\Gamma_0(p^2)} + O\left(\frac{1}{g_{p^2}}\right).
\end{equation}

\subsection{Computation of $\mathcal{R}_{\infty}^{\Ga_0(p^2)}$}
\label{Essterm}
We first show that $R_{\infty}^{\Ga_0(p^2)}=\RG$ is the constant term in the Laurent series expansion of the Rankin-Selberg transform at the cusp $\infty$ of the Arakelov metric. We start by writing down the canonical volume form 
$\mucan$ in coordinates. Let 
$S_2\big(\GaP\big)$ be the space of holomorphic cusp forms of weight $2$ with respect to 
$\GaP$. We then have an isomorphism $S_2\big(\GaP\big) \cong H^0(X_0(p^2), \Omega^1)$ given by
 $f(z) \mapsto f(z) dz$. The space of cusp forms  $S_2\big(\GaP\big)$ is equipped with the Petersson inner product. Let $\{f_1, \ldots, f_{g_{p^2}}\}$ 
 be an orthonormal basis of $S_2\big(\Ga_0(p^2)\big)$. The Arakelov metric on $X_0(p^2)$ is the function given by 
\begin{equation}\label{F}
F(z):=\frac{\Im(z)^2}{g_{p^2}} \sum_{j=1}^{g_{p^2}} \vert f_j(z)\vert^2 .
\end{equation}
It is easy to see that the Arakelov metric as defined above is independent of the choice of 
orthonormal basis. In the local co-ordinate $z$, the canonical volume form $\mucan(z)$ is given by \cite[p. 18]{Grados:Thesis}
\begin{equation}\label{mucan-coordinate}
\mucan(z)=\frac{i}{2g_{p^2}} \sum_{j=1}^{g_{p^2}} \vert f_j(z)\vert^2 dz \wedge d\ov z = F(z) \muhyp. 
\end{equation}

Next, we recall the definition of Rankin-Selberg transform of a function at the cusp $\infty$ \cite[p. 19]{Grados:Thesis}.  Let $f$ be a $\Ga_0(p^2)$-invariant holomorphic function on $\mathbb{H}$ of rapid decay at the cusp $\infty$, i.e., the constant term in the Fourier series expansion of $f$ at the cusp $\infty$ 
\[
f(x+iy)= \sum_{n} a_n(y) e^{2\pi i nx}, 
\]
satisfies the asymptotic $a_0(y)=O(y^{-M})$ for some $M>0$ as $y \to \infty$. The Rankin-Selberg transform of $f$ at the cusp $\infty$, denoted by $R_f(s)$, is then defined as
\[
R_f(s):=\int_{Y_0(p^2)} f(z) E_{\infty,0}(z,s) \muhyp(z) = \int_0^{\infty} a_0(y) y^{s-2} \, dy.
\]
The function $R_f(s)$ is holomorphic for $\Re(s) >1$ and admits a meromorphic continuation to the whole complex plane and has a simple pole at $s=1$ with residue
\[
\frac{1}{v_{\Ga_0(p^2)}}\int_{Y_0(p^2)} f(z) \muhyp(z).
\]

The following lemma is similar to \cite[ Lemma 3.5]{MR3232770}. We determine the relation 
between the Rankin-Selberg transforms at the cusp $0$ (defined similarly by replacing $\infty$ with $0$) and $\infty$ from the following lemma. 
\begin{lemma}\label{E_0-E_infty}
\label{eqzi}
The Rankin-Selberg transforms of the Arakelov metric at the cusps $0$ and $\infty$ are related by
\[
\int_{Y_0(p^2)} E_{0,0}(z,s) \mucan(z)=\int_{Y_0(p^2)} E_{\infty,0}(z,s) \mucan(z).
\]
\end{lemma}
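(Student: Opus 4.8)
The plan is to realize the identity as a consequence of the Atkin--Lehner involution $W_{p^2}=\smmat{0}{1}{-p^2}{0}$, which supplies a measure-preserving symmetry of $Y_0(p^2)$ interchanging the cusps $0$ and $\infty$. Two facts drive the argument. First, $W_{p^2}$ normalizes $\Ga_0(p^2)$: a direct computation gives $W_{p^2}\smmat{a}{b}{c}{d}W_{p^2}^{-1}=\smmat{d}{-c/p^2}{-p^2b}{a}$, which lies in $\Ga_0(p^2)$ as soon as $p^2\mid c$. Second, as a M\"obius transformation $W_{p^2}(z)=-1/(p^2z)$ is an involution of $\tH$ sending $\infty\mapsto 0$ and $0\mapsto\infty$, so that conjugation by $W_{p^2}$ carries the stabilizer $\Ga_0(p^2)_\infty$ onto the stabilizer $\Ga_0(p^2)_0$ of the cusp $0$.

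The key step I would carry out is the pointwise identity $E_{\infty,0}(W_{p^2}z,s)=E_{0,0}(z,s)$. Recall from \eqref{scaling-0} that the scaling matrix at $0$ was chosen to be $\sigma_0^{-1}=p^{-1}W_{p^2}\in\SL_2(\rr)$; since a positive scalar does not affect the M\"obius action, $\Im(\sigma_0^{-1}\ga z)=\Im(W_{p^2}\ga z)$ for every $\ga$. Inserting $W_{p^2}z$ into the definition of $E_{\infty,0}$ and writing $\ga W_{p^2}=W_{p^2}\ga'$ with $\ga':=W_{p^2}^{-1}\ga W_{p^2}\in\Ga_0(p^2)$, the two facts above show that as $\ga$ ranges over coset representatives for $\Ga_0(p^2)_\infty\backslash\Ga_0(p^2)$, the element $\ga'$ ranges over representatives for $\Ga_0(p^2)_0\backslash\Ga_0(p^2)$. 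Hence
\[
E_{\infty,0}(W_{p^2}z,s)=\sum_{\ga}\Im(\ga W_{p^2}z)^s=\sum_{\ga'}\Im(W_{p^2}\ga'z)^s=\sum_{\ga'}\Im(\sigma_0^{-1}\ga'z)^s=E_{0,0}(z,s).
\]

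To conclude, I would use that $\mucan$ is invariant under the automorphism of $Y_0(p^2)$ induced by $W_{p^2}$. Indeed, any automorphism of the compact Riemann surface $X_0(p^2)$ acts on $H^0(X_0(p^2),\Omega^1)$ by pullback as a unitary operator for the Hodge inner product $\tfrac{i}{2}\int\phi\wedge\ov\psi$, hence sends an orthonormal basis to an orthonormal basis; as $\sum_j f_j\wedge\ov{f_j}$ is independent of that choice, the form $\mucan=\tfrac{i}{2g_{p^2}}\sum_j f_j\wedge\ov{f_j}$ is preserved. Substituting the identity of the previous step and changing variables $z\mapsto W_{p^2}z$ in $\int_{Y_0(p^2)}E_{0,0}(z,s)\mucan(z)$ then leaves the measure unchanged and produces $\int_{Y_0(p^2)}E_{\infty,0}(z,s)\mucan(z)$, as desired. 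The only delicate point is the coset bookkeeping in the middle step; once the normalization $\sigma_0^{-1}=p^{-1}W_{p^2}$ is invoked, everything else is formal.
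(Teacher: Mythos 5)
Your proposal is correct and follows essentially the same route as the paper: both prove the pointwise identity $E_{0,0}(z,s)=E_{\infty,0}(\sigma_0^{-1}z,s)$ via the Atkin--Lehner/scaling matrix conjugation (which permutes the cosets $\Ga_0(p^2)_\infty\backslash\Ga_0(p^2)$ and $\Ga_0(p^2)_0\backslash\Ga_0(p^2)$) and then change variables using the invariance of $\mucan$. The only difference is that you justify the $W_{p^2}$-invariance of $\mucan$ (via unitarity of pullback on $H^0(X_0(p^2),\Omega^1)$), which the paper simply asserts.
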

\begin{proof}
Let $\sigma_0$ be the scaling matrix of the cusp $0$ defined in (\ref{scaling-0}). Notice that
\begin{multline*}
  E_{0,0}(z,s)  =\sum_{\ga \in \Ga_0(p^2)_0 \backslash \Ga_0(p^2)} \big(\Im(\sigma_0^{-1}\ga z)\big)^s 
    = \sum_{\beta \in \Ga_0(p^2)_{\infty} \backslash \Ga_0(p^2)} \big(\Im(\beta\sigma_0^{-1} z)\big)^s \\
  =\sum_{\beta \in \Ga_0(p^2)_{\infty} \backslash \Ga_0(p^2)} \big(\Im(\sigma_{\infty}^{-1}\beta(\sigma_0^{-1} z))\big)^s 
    = E_{\infty,0}(\sigma_0^{-1}z,s),
\end{multline*}
by taking $\sigma_{\infty}=I$. Now substituting $z=\sigma_0 w$, and using the fact that $\mucan$ is invariant under $\sigma_0$, the result follows.
\end{proof}
As a result we have the following simpler formula for $R_{\infty}^{\Ga_0(p^2)}$ defined in \ref{R-Ga}
\begin{equation}\label{C-1}
R_{\infty}^{\Ga_0(p^2)}=\lim_{s \to 1} \left(\int_{Y_0(p^2)} E_{\infty,0}(z,s) \mu_{\text{can}}(z) -\frac{1}{v_{\Ga_0(p^2)}} \frac{1}{s-1}\right).
\end{equation}
Let $F$ be the Arakelov metric on $X_0(p^2)$ and write $\mu_{\text{can}}(z)=F(z) \muhyp$ in Equation~\eqref{C-1}. 
 Then we see that the integral is the Rankin-Selberg transform $R_F(s)$ of $F$ at the cusp $\infty$ and hence $R_{\infty}^{\Ga_0(p^2)}=\RG$. 
 The next section of the paper is devoted to finding an estimate of $\mathcal{R}_{\infty}^{\Ga_0(p^2)}$ using this formula. 
 In the rest of the paper, we denote by $R_G$ the Rankin-Selberg transform of a function $G$ at the cusp $\infty$. 
 \subsection{Epstein zeta functions}
\label{Quadraticforms}
In this subsection, we define the Epstein zeta functions and recall some basic properties of the same. 
We first recall a connection between quadratic forms and matrices in the group $\Ga_0(N)$. For $a,b,c \in \Z$, 
let $[a,b,c]$ be the quadratic form
\[
\Phi(X, Y)=aX^2+b XY+cY^2=(X,Y) \left(\begin{smallmatrix}
a & b/2\\
b/2 & c\\
\end{smallmatrix}\right)(X,Y)^t.
\]
The discriminant of $\Phi$ is by definition $\dis (\Phi) :=b^2-4ac$.
For any integer $l \in \zz$ with $\vert l \vert \neq 2$, define
\begin{align*}
Q_l & =\{\Phi| \dis (\Phi)=l^2-4\},\\
Q_l(N) & =\big\{\Phi |\Phi \in Q_l ; \Phi=[Na, b, c] : a,b, c \in \Z \big\}.
\end{align*}
The full modular group $\SL_2(\mathbb{Z})$ acts on $Q_l$ by
\begin{equation}\label{mainact}
\begin{aligned}
Q_l \times \SL_2(\Z) & \to Q_l\\
(\Phi, \de) & \mapsto \Phi \circ \de
\end{aligned}
\end{equation}
where $\Phi \circ \de(X,Y)=\Phi\big((X, Y)\de^t\big)$.
If $\delta=\left(\begin{smallmatrix}
x & y\\
z & t\\
\end{smallmatrix}\right)$, then it can be shown that
\[
\Phi \circ \delta=\big[\Phi(x,z), b(x t+y z)+2(axy+czt), \Phi(y,t)\big].
\]
Note that the above defines an action of $\Ga_0(N)$ on $Q_l(N)$. 
\begin{proposition}\label{stabPhi}
Let $\Phi \in Q_l$ be a quadratic form with $\dis (\Phi)=l^2-4$. If $\vert l \vert<2$ then $\SL_2(\mb{Z})_{\Phi}$ is finite and if $\vert l \vert >2$ then
\[
\SL_2(\Z)_{\Phi}=\left\{\pm M^n : n \in \mb{Z}\right\}
\]
for some $M \in \SL_2(\mb{Z})$ with positive trace and which is unique up to replacing $M$ by $M^{-1}$. Moreover, if $\Phi \in Q_l(N)$, then
\[
\Gamma_0(N)_{\Phi}=\SL_2(\Z)_{\Phi}.
\]
\end{proposition}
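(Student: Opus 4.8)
The plan is to identify $\SL_2(\Z)_\Phi$ with the group of \emph{automorphs} of the quadratic form $\Phi$ and to read off its structure from the classical theory of the associated Pell equation. Write $\Phi=[a,b,c]$ and let $A=\left(\begin{smallmatrix} a & b/2 \\ b/2 & c\end{smallmatrix}\right)$ be its Gram matrix, so that $\Phi(X,Y)=(X,Y)A(X,Y)^t$. Since $\Phi\circ\delta(X,Y)=(X,Y)\delta^tA\delta(X,Y)^t$, a matrix $\delta\in\SL_2(\Z)$ lies in $\SL_2(\Z)_\Phi$ if and only if $\delta^tA\delta=A$. First I would record that $\det A=(4-l^2)/4$, which is nonzero precisely because $|l|\neq 2$, so $A$ is nondegenerate. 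A direct entry comparison (writing $\delta=\left(\begin{smallmatrix} x & y \\ z & t\end{smallmatrix}\right)$ and solving the resulting linear relations together with $\det\delta=1$) shows that every such $\delta$ has the shape
\[
\delta=\begin{pmatrix} \frac{\tau-bu}{2} & -cu \\ au & \frac{\tau+bu}{2}\end{pmatrix},\qquad \tau^2-(l^2-4)u^2=4,\quad \tau,u\in\Z,
\]
with $\tau=\tr\delta$, and conversely every integer solution $(\tau,u)$ of $\tau^2-(l^2-4)u^2=4$ yields such an element (the parity $\tau\equiv bu\pmod 2$ needed for integrality is automatic). This sets up a bijection between $\SL_2(\Z)_\Phi$ and the solutions of the Pell equation $\tau^2-(l^2-4)u^2=4$, which is moreover a group isomorphism.

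Next I would treat the case $|l|<2$. Then $l^2-4<0$ and the Pell equation reads $\tau^2+(4-l^2)u^2=4$; since both terms are nonnegative this has only finitely many integer solutions, so $\SL_2(\Z)_\Phi$ is finite. Equivalently, $A$ is definite, so $\{g:g^tAg=A\}$ is conjugate to $\mathrm{O}(2)$ and hence compact, and $\SL_2(\Z)_\Phi$ is a discrete subgroup of a compact group.

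For $|l|>2$ the discriminant $l^2-4$ is positive, and since $(l-m)(l+m)=4$ has no integer solution with $|l|>2$, it is not a perfect square. Dirichlet's theory of the Pell equation then furnishes a fundamental solution $(\tau_1,u_1)$ with $\tau_1,u_1>0$ such that every solution is $\pm\big(\tfrac{\tau_1+u_1\sqrt{l^2-4}}{2}\big)^n$ for $n\in\Z$. Translating through the bijection above, the corresponding automorph $M$ satisfies $\SL_2(\Z)_\Phi=\{\pm M^n:n\in\Z\}$ with $\tr M=\tau_1>0$. The positive-trace normalization removes the sign ambiguity $M\leftrightarrow -M$, while replacing $u_1$ by $-u_1$ sends $M$ to $M^{-1}$ (which has the same trace, as $\tr M^{-1}=\tr M$ for $M\in\SL_2$); this is the only remaining ambiguity, giving uniqueness up to $M\leftrightarrow M^{-1}$.

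Finally, for the statement about $\Gamma_0(N)$ the inclusion $\Gamma_0(N)_\Phi\subseteq\SL_2(\Z)_\Phi$ is immediate. For the reverse inclusion, suppose $\Phi=[Na,b,c]\in Q_l(N)$ and let $\delta\in\SL_2(\Z)_\Phi$. By the parametrization the lower-left entry of $\delta$ equals (leading coefficient)$\,\times u=Na\,u$, which is divisible by $N$; hence $\delta\in\Gamma_0(N)$. This argument is uniform in both cases and completes the proof. The hard part will be establishing the Pell parametrization itself — namely that the entry-comparison computation exhausts all automorphs and that composition of automorphs corresponds to multiplication of Pell solutions; once this classical input is secured, the finiteness, the cyclic-up-to-sign structure, and the $\Gamma_0(N)$ equality all follow as above.
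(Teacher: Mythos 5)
Your proposal is correct and follows essentially the same route as the paper: both identify $\SL_2(\Z)_\Phi$ with the automorphs $U_\Phi(x,y)$ parametrized by solutions of the Pell equation $x^2-(l^2-4)y^2=4$ (the paper simply cites Zagier, Satz 2, for the classification you sketch), and both deduce $\Gamma_0(N)_\Phi=\SL_2(\Z)_\Phi$ from the fact that the lower-left entry of such an automorph is the leading coefficient times $y$, hence divisible by $N$.
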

\begin{proof}
Let $\Phi=[a,b,c]$ be a quadratic form with discriminant $\dis (\Phi)=\De$. If $(x,y) \in \Z^2$ is a solution of the Pell's equation 
$P_{\Delta} : x^2-\Delta y^2=4$, then the matrix 
\[
U_{\Phi}(x,y)=\left(\begin{smallmatrix}
\frac{x-yb}{2} & -cy\\
ay & \frac{x+yb}{2}\\
\end{smallmatrix}\right)
\]
is an automorphism of $\Phi$ and all the automorphisms of $\Phi$ are of this form \cite[p. 63, Satz 2]{MR631688}. Since $\Phi \in Q_l(N)$, we have $U_{\Phi} \in \Ga_0(N)$ and hence $\SL_2(\Z)_{\Phi}=\Gamma_0(N)_{\Phi}$.
\end{proof}

\begin{remark}\label{fd-unit}
For a quadratic form $\Phi \in Q_l$ with $\vert l \vert>2$, let $M$ be as in the proposition. The largest eigenvalue of $M$  is called its fundamental unit and is denoted by $\ep_{\Phi}$. Note that it is well-defined as the eigenvalues of $M^{-1}$ are reciprocals of the eigenvalues of $M$.
\end{remark}

Next, consider the set of matrices in the modular group $\Ga_0(N)$ of trace $l$
\[
\Ga_0(N)_l :=\big\{\ga \in \Ga_0(N) : \tr (\ga) =l\big\}.
\]
The full modular group $\SL_2(\Z)$ acts on this set by conjugation
\begin{align*}
\Ga_0(N)_l \times \SL_2(\Z) & \to \Ga_0(N)_l\\
(\ga, \delta) & \mapsto \de^{-1}\ga \de.
\end{align*}
Note that $\Ga_0(N) \subset \SL_2(\mb{Z})$ also acts on $\Ga_0(N)_l$ in the same way as above.

There is a $\Ga_0(N)$ equivariant one-one correspondence between $\Ga_0(N)_l$ and $Q_l(N)$ given by
\[
\psi: \gamma=\left(\begin{smallmatrix}
a & b\\
N c & d\\
\end{smallmatrix}\right) \mapsto \Phi_{\gamma}=[Nc,d-a,-b],
\]
with inverse
\[
\psi^{\prime}: \Phi=[aN,b,c] \mapsto \gamma_\Phi=\left(\begin{smallmatrix}
\frac{l-b}{2}& -c\\
Na  & \frac{l+b}{2}\\
\end{smallmatrix}\right). 
\]
 We observe that this correspondence induces a bijection
\[
Q_l(N)/\Ga_0(N) \cong \Ga_0(N)_l/\Ga_0(N).
\]

Now fix a quadratic form $\Phi \in Q_l$ and define an action of $\Phi$ on the set $\Z^2$ by 
\begin{align*}
\Phi \cdot (m,n) & =\Phi(n,-m), \quad (m,n) \in \mb{Z}^2.
\end{align*}
Also consider the following subset of $\zz^2$:
\begin{align*}
M^\Phi & =\big\{(m,n) \in \Z^2 | \Phi \cdot (m,n)>0\big\} \subset \zz^2.
\end{align*}
Observe that $\SL_2(\mb{Z})_{\Phi}$ acts on $M^{\Phi}$ by matrix multiplication from the right which follows from the identity
\begin{equation}\label{id-action}
(\Phi \circ \de) \cdot (m,n)\de = \Phi \cdot (m,n),\quad \de \in \SL_2(\mb{Z}).
\end{equation}
\begin{definition}
The Epstein zeta function associated to the quadratic form $\Phi$ is defined to be
\[
\zeta_\Phi(s)=\sum_{(m,n) \in M^\Phi \slash \SL_2(\Z)_\Phi} \frac{1}{(\Phi \cdot(m,n))^s}
\]
which is well-defined by \eqref{id-action}.
\end{definition}
The series converges absolutely for $\Re(s)>1$ and defines a holomorphic function. It has a 
meromorphic extension to the entire complex plane and has a simple pole at $s=1$ with residue (see \cite[\S 3.2.2]{MR1437298}):
\begin{equation}\label{res-ep-zeta}
\Res_{s=1}\zeta_{\Phi}(s)=
\begin{cases}
\frac{2\pi}{\sqrt{\vert \dis(\Phi)\vert}} \frac{1}{\vert  \SL_2(\Z)_\Phi\vert} & \dis (\Phi)<0,\\
\frac{1}{\sqrt{\vert \dis(\Phi)\vert}} \log(\ep_{\Phi}) & \dis \Phi>0,
\end{cases} 
\end{equation}
where $\ep_{\Phi}$ is the fundamental unit as in Remark~\ref{fd-unit}. 
Now let $\Phi \in Q_l(N)$ be a quadratic form and $d\mid N$. Consider the following sets
\begin{align*}
M_d & =\big\{(Nm, dn) \in \mb{Z}^2 \setminus\{0,0\}\big\},\\
M^{\Phi}_d & =\{(Nm, dn) | \Phi \cdot (Nm, dn) >0\}.
\end{align*}
Note that $\SL_2(\mb{Z})_{\Phi}=\Ga_0(N)_{\Phi}$ acts on $M_d^{\Phi}$ in view of \eqref{id-action}.

\begin{definition}
For a quadratic form $\Phi \in Q_l(N)$ and $d \mid N$,  define the zeta function
\[
\zeta_{\Phi,d}(s)=\sum_{(m,n) \in M^{\Phi}_d/\SL_2(\mb{Z})_{\Phi}} \frac{1}{(\Phi \cdot (m,n))^s}
\]
which is well-defined by \eqref{id-action}.
\end{definition}
 
Consider the group homomorphism $*d: \Ga_0(N) \to \Ga_0(d) $ defined by
\[
\ga=\begin{pmatrix}
x & y\\
Nz & t
\end{pmatrix}
\mapsto \ga^{*d}=
\begin{pmatrix}
x & \frac{N}{d}y\\
dz & t
\end{pmatrix}
\]
and note that this map induces the injection $*d : Q_l(N) \to Q_l(d)$
\[
\Phi=[Na,b,c] \mapsto \Phi^{*d}=\left[da, b, \frac{N}{d}c\right].
\]
The map $*d$ respects the action (see \eqref{mainact}) of $\Ga_0(N)$ on $Q_l(N)$ in the sense that $(\Phi \circ \ga)^{*d}=\Phi^{*d} \circ \ga^{*d}$. This immediately implies that $*d$ maps $ \Ga_0(N)_{\Phi}$ into $\Ga_0(d)_{\Phi^{*d}}$. This map is also surjective. Indeed, suppose $\Phi=[Na,b,c]$ and $\De=\dis(\Phi)=\dis(\Phi^{*d})$. If $B\in \Ga_0(d)_{\Phi^{*d}}$, then as in Proposition~\ref{stabPhi},
\[
B=U_{\Phi^{*d}}(x,y)=\begin{bmatrix}
\frac{x-yb}{2} & -\frac{N}{d}cy\\
day & \frac{x+yb}{2}
\end{bmatrix}
\]
where $(x,y)$ is a solution of the Pell's equation $x^2-\De y^2=4$. Set
\[
A=\begin{bmatrix}
\frac{x-yb}{2} & -cy\\
Nay & \frac{x+yb}{2}
\end{bmatrix}.
\]
It is evident that $A \in \Ga_0(N)_{\Phi}$ and $A^{*d}=B$.

To find a relation between $\zeta_{\Phi,d}$ and the Epstein zeta function, note that $\Phi \cdot (Nm, dn)=(Nd) \Phi^{*d}\cdot (m,n)$, and thus
\begin{equation}\label{Phi-Phi*}
\begin{aligned}
\zeta_{\Phi,d}(s) & = \sum_{(m,n) \in M^{\Phi}_d/\Ga_0(N)_{\Phi}} \frac{1}{(\Phi \cdot (m,n))^s}\\
& = \sum_{(m,n) \in M^{\Phi^{*d}}/\Ga_0(d)_{\Phi^{*d}}}\left(\frac{1}{(N d) \Phi^{*d}\cdot (m,n)}\right)^s\\
& =\frac{1}{(Nd)^s}[\SL_2(\Z)_{\Phi^{*d}}:\Ga_0(d)_{\Phi^{*d}}]\zeta_{\Phi^{*d}}(s).
\end{aligned}
\end{equation}
By Proposition~\ref{stabPhi}, we have $[\SL_2(\Z)_{\Phi^{*d}}:\Ga_0(d)_{\Phi^{*d}}]=1$. 
We now calculate the residue of $\zeta_{\Phi,d}(s)$  by expressing it in terms of Epstein zeta functions. 
Our computation is similar to that of \cite[Proposition 3.2.3]{MR1437298}.
 From \eqref{res-ep-zeta}, we get for $\dis (\Phi)>0$
\begin{equation}\label{res-ep*}
\Res_{s=1}\zeta_{\Phi,d}(s)=\frac{ \log \ep_{\Phi^{*d}}}{Nd\sqrt{\dis (\Phi^{*d})}} =\frac{\log \ep_{\Phi}}{Nd\sqrt{\dis (\Phi})}. 
\end{equation}
For $\dis (\Phi)<0$, we also obtain
\begin{equation}\label{res-ep-ell}
\Res_{s=1}\zeta_{\Phi,d}(s)=\frac{2\pi}{Nd\sqrt{\vert \dis (\Phi^{*d})\vert}\vert  \SL_2(\Z)_{\Phi^{*d}}\vert}=\frac{2\pi}{Nd\sqrt{\vert \dis (\Phi)\vert}\vert  \SL_2(\Z)_\Phi\vert}.
\end{equation}

\begin{definition}\label{zeta-Ga}
Let  $\mu$ be  the M\"obius function. Define  the zeta function
\[
\zeta_{\Ga_0(p^2)}(s,l)= \frac{1}{2 \zeta(2s) (1-p^{-2s})}\sum_{d\in\{1,p\}} \mu(d)  
\sum_{\Phi \in Q_l(p^2)/\GaP} \zeta_{\Phi,d}(s).
\]
\end{definition}
Again, these zeta functions are suitable linear combinations of Epstein zeta functions. Hence these functions have meromorphic 
continuations in the entire complex $s$ plane with simple pole at $s=1$. The residues 
can be computed using the formulae of residues of Epstein zeta functions. 
In other words, we have the following Laurent series expansion at $s=1$:
\begin{align*}
\zeta_{\GaP}(s,l) & =\frac{a_{-1}(l)}{s-1}+a_0(l)+O(s-1).
\end{align*}

\section{Spectral expansions of automorphic kernels} \label{Tracenice}
\subsection{Selberg trace formula}
 To compute $R_F(s)$, we follow the strategy carried out in \cite{MR1437298} and \cite{MR3232770}. Consider on one hand the spectral expansions of certain automorphic kernels that consist of the term $F$ together with some well-behaved terms and on the other hand the contribution of various motions of $\Ga_0(p^2)$ to these kernels. Computation of $R_F(s)$ then reduces to understanding the other terms in this identity which are easier to handle. To elaborate this process, let us recall the definitions of these kernels.  For $t>0$, let $h_t: \mathbb{R} \to \mathbb{R}$ be the test  function
\[
h_t(r)=e^{-t(\frac{1}{4}+r^2)},
\]
which is a function of rapid decay. 
\subsubsection{Automorphic kernels of weights $k=0,2$}The automorphic kernels involve the inverse Selberg/Harish-Chandra transform $\phi_k(t, \cdot)$ of $h_t$ of weights $k=0,2$. These are given by
\begin{equation}\label{k_0}
\begin{aligned}
& g_t(v) =\frac{1}{2\pi} \int_{-\infty}^{\infty} h_t(r) e^{-ivr} \, dr, \quad v \in \mathbb{R},\\
& q_t(e^v+e^{-v}-2) = g_t(v), \quad v\in \mathbb{R},\\
& \phi_{0}(t,u) =-\frac{1}{\pi}\int_{-\infty}^{\infty} q_t^{\prime}(u+v^2) \, dv, \quad u \geq 0,\\
& \phi_2(t,u)=-\frac{1}{\pi} \int_{-\infty}^{\infty} q^{\prime}_t(u+v^2) \frac{\sqrt{u+4+v^2} - v}{\sqrt{u+4+v^2} + v} \, dv, \quad u \geq 0.
\end{aligned}
\end{equation}
 Consider the functions
\[
u(z,w)=\frac{\vert z- w \vert^2}{4 \Im z \Im w} \quad
\text{and} \quad H(z,w)=\frac{w-\overline{z}}{z-\overline{w}},% \quad \text{and} \quad \pi_k(t, z,w)=H^{k/2}(z,w)\phi_k(t,u(z,w)).
\]
and for $\ga \in \SL_2(\zz)$ set
\[
\nu_k(t,\ga;z)=j_{\ga}(z,k) H^{k/2}(z,\ga z)\phi_k(t,u(z,\ga z)). 
\]
Here,  $j_\ga$ is the automorphic factor defined in (\ref{auto-factor}).

The automorphic kernel $K_k(t,z)$ of weight $k$ with respect to $\GaP$ is defined as
\begin{equation}\label{auto-ker}
\begin{aligned}
K_0(t,z) &:= \frac{1}{2} \sum_{\ga \in  \GaP} \nu_0(t,\ga;z)=\frac{1}{2} \sum_{\ga \in \GaP} \phi_0\big(t, u(z, \ga z)\big) ,\\
K_2(t,z)& :=\frac{1}{2} \sum_{\ga \in  \GaP} \nu_2(t,\ga;z) = \frac{1}{2} \sum_{\ga \in  \GaP} j_{\ga}(z,2)H(z, \ga z) \phi_2\big(t, u(z, \ga z)\big).
\end{aligned}
\end{equation}
We also denote the corresponding summations over the elliptic, hyperbolic, and parabolic elements of  
$\GaP$ by $\sE_k$, $H_k$, and $P_k$ respectively, i.e.,
\begin{equation}\label{auto-ker-parts}
\begin{aligned}
\sE_k(t, z) & :=\frac{1}{2} \sum_{\substack{\ga \in  \GaP\\ \vert \tr (\ga)\vert<2}} \nu_k(t,\ga;z),\\
H_k(t,z) & := \frac{1}{2} \sum_{\substack{\ga \in  \GaP\\ \vert \tr (\ga)\vert>2}} \nu_k(t,\ga;z),\\
P_k(t,z) & := \frac{1}{2} \sum_{\substack{\ga \in  \GaP\\ \vert \tr (\ga)\vert=2}} \nu_k(t,\ga;z),
\end{aligned}
\end{equation}
and write $\sE=\sE_2-\sE_0$, $H=H_2-H_0$ and $P=P_2-P_0$. Let $R_H$, $R_{\sE}$ and $R_P$ be the Rankin-Selberg transforms of these functions at the cusp $\infty$. 

We now simplify the calculation of the Rankin-Selberg transforms of various terms above. To do the same, we introduce
\begin{align}\label{F-R}
F_k^l(t, z) & := \sum_{\substack{\ga \in \GaP\\ \tr (\ga) =l}} \nu_k(t, \ga; z),
\end{align}
\begin{align}\label{F-R1}
R_k^l(t,s) & := \int_{Y_0(p^2)} E_{\infty,0}(z,s) F_k^l(t, z) \muhyp(z).
\end{align}
 We now compute $R_k^l$ by exploiting the connection between 
Epstein zeta function and Eisenstein series.
\begin{lemma}\label{changevariable}
Let $\ga \in \GaP_l$ and suppose $\Phi_\ga \cdot (m,n) >0$.  We have the following equality of integrals
\[
\int_{\mb{H}} \nu_k(t, \ga; z) \frac{y^s}{\vert mz +n \vert^{2s}} \muhyp(z) = \frac{1}{(\Phi_\ga \cdot (m,n))^s} \int_{\mb{H}} \nu_k(t, \ga_{\pm l}; z) y^s \muhyp(z),
\]
where $\gamma_{\pm l} =\left(\begin{smallmatrix}
\pm \frac{l}{2} & \frac{l^2}{4}-1\\
1 & \pm \frac{l}{2} \\
\end{smallmatrix}\right)$.
\end{lemma}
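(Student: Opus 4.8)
The plan is to prove the identity by a single change of variables $z = gw$ for a carefully chosen $g \in \SL_2(\rr)$ depending on $\ga$ and the pair $(m,n)$, designed to do two things at once: conjugate $\ga$ to the normal form $\ga_{\pm l}$, and ``straighten'' the weight $y^s/\vert mz+n\vert^{2s}$ into a constant multiple of $(\Im w)^s$. Since $\muhyp$ is $\SL_2(\rr)$-invariant, the only data to track under the substitution are the kernel factor $\nu_k(t,\ga;z)$ and this weight.

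First I would record the equivariance of the kernel factor: for every $g \in \SL_2(\rr)$,
\[
\nu_k(t, g\ga g^{-1}; gz) = \nu_k(t, \ga; z).
\]
This splits into two standard facts. The point-pair invariant satisfies $u(gz, gw) = u(z,w)$, so $\phi_k\big(t, u(z,\ga z)\big)$ is invariant; it is convenient to note here the closed form $u(z,\ga z) = \vert \Phi_\ga(z,1)\vert^2/(4(\Im z)^2)$, which follows from $\ga z - z = -\Phi_\ga(z,1)/(cz+d)$ together with $\Im(\ga z) = \Im z/\vert cz+d\vert^2$. The remaining factor $j_\ga(z;k)\,H(z,\ga z)^{k/2}$ is invariant by the cocycle relation for $cz+d$ and the transformation law $H(gz,gw) = H(z,w)\,(\rho z+\de)\overline{(\rho w+\de)}/\big((\rho w+\de)\overline{(\rho z+\de)}\big)$ for $g=\begin{pmatrix}\al&\beta\\\rho&\de\end{pmatrix}$, which is precisely the invariance for which the weight-$k$ Selberg kernel is built. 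Granting this, if $g^{-1}\ga g = \ga_{+l}$ then $z=gw$ turns $\nu_k(t,\ga;z)$ into $\nu_k(t,\ga_{+l};w)$.

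The heart of the argument is the explicit construction of $g$. Writing $\ga = \begin{pmatrix}a&b\\c&d\end{pmatrix}$, a direct computation gives, under $z=gw$,
\[
\frac{(\Im z)^s}{\vert mz+n\vert^{2s}} = \frac{(\Im w)^s}{\vert (m\al+n\rho)w + (m\beta+n\de)\vert^{2s}},
\]
so I want $m\al + n\rho = 0$ and $\vert m\beta+n\de\vert^2 = \Phi_\ga\cdot(m,n)$. I therefore take the first column of $g$ proportional to $(n,-m)$, say $(\al,\rho)=\la(n,-m)$, killing the $w$-coefficient, and solve the first-column equations of $g\ga_{+l}=\ga g$ for the rest, obtaining $\beta = \la\big(\tfrac{n(a-d)}{2}-bm\big)$ and $\de = \la\big(cn+\tfrac{m(a-d)}{2}\big)$. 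The key coincidence is that with these choices
\[
\det g = \la^2\big(cn^2+(a-d)mn-bm^2\big) = \la^2\,\big(\Phi_\ga\cdot(m,n)\big), \qquad m\beta+n\de = \la\,\big(\Phi_\ga\cdot(m,n)\big),
\]
so setting $\la = (\Phi_\ga\cdot(m,n))^{-1/2}$ forces $\det g = 1$ and simultaneously $\vert m\beta+n\de\vert^2 = \Phi_\ga\cdot(m,n)$. This is exactly where the hypothesis $\Phi_\ga\cdot(m,n)>0$ is used: it makes $\la\in\rr$, hence $g\in\SL_2(\rr)$. That the first-column equations already force the full relation $g\ga_{+l}g^{-1}=\ga$ follows because $\ga$ and $g\ga_{+l}g^{-1}$ share the characteristic polynomial $X^2-lX+1$ and agree on $v=(\al,\rho)^t$; by Cayley--Hamilton they then agree on $\ga v$, and $\{v,\ga v\}$ is a basis except for the degenerate pairs $(m,n)$ lying along an eigendirection of $\ga$, which are excluded or handled by continuity.

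Assembling the pieces, I substitute $z=gw$, use the $\SL_2(\rr)$-invariance of $\muhyp$, the weight identity with constant denominator $\Phi_\ga\cdot(m,n)$, and the kernel equivariance, so that the left-hand integral becomes $(\Phi_\ga\cdot(m,n))^{-s}\int_{\mb{H}}\nu_k(t,\ga_{+l};w)\,(\Im w)^s\,\muhyp(w)$, as claimed. I expect the main obstacle to be not the existence of $g$ (the construction above is explicit once $\Phi_\ga\cdot(m,n)>0$) but rather the clean verification of the weight-$k$ equivariance of $\nu_k$, i.e.\ the cocycle bookkeeping for the $j_\ga\,H^{k/2}$ factor, together with fixing the sign $\pm$: the trace already pins the $+l$ normal form when $l\neq 0$, so the $\pm$ records orientation, and one should keep the hyperbolic case ($\dis\Phi_\ga>0$, where $\Phi_\ga\cdot(m,n)$ changes sign with $(m,n)$) separate from the elliptic case ($\dis\Phi_\ga<0$, where $\Phi_\ga$ is definite and $\Phi_\ga\cdot(m,n)$ has constant sign), the hypothesis $\Phi_\ga\cdot(m,n)>0$ selecting precisely the admissible configurations.
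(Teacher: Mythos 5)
Your proposal is correct and follows essentially the same route as the paper: the matrix $g$ you construct (first column proportional to $(n,-m)$, normalized by $(\Phi_\ga\cdot(m,n))^{-1/2}$) is exactly the paper's matrix $T$, and the argument proceeds identically by substituting $z=Tw$, using the $\SL_2(\rr)$-invariance of $\muhyp$, the transformation of the weight, and the conjugation-equivariance of $\nu_k$. You merely spell out the verifications the paper labels as "easily checked" (the determinant computation, the Cayley--Hamilton argument for the full conjugation relation, and the role of the hypothesis $\Phi_\ga\cdot(m,n)>0$), which is a welcome but not substantively different elaboration.
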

\begin{proof}
For a matrix $\ga=\left(\begin{smallmatrix}
a & b\\
c & d\\
\end{smallmatrix}\right)$, we have the quadratic form $\Phi_{\ga}=[c, d-a, -b]$ associated to $\ga$. Consider the matrix
\[
T=
\frac{1}{\sqrt{\Phi_{\ga} \cdot (m,n)}}\begin{pmatrix}
n & -(d-a)\frac{n}{2}-bm\\
-m & cn - (d-a)\frac{m}{2}
\end{pmatrix}.
\]
We can easily verify that  $T \in \SL_2(\mb{R})$, $T^{-1}\ga T= \ga_l$, $\dfrac{\Im Tw}{\vert mTw+n\vert^2}=\dfrac{\Im w}{\Phi_\ga \cdot (m,n)}$.

A small check using matrix multiplication shows that $\nu_k(t, \ga; Tw)=\nu_k(t, T^{-1}\ga T; w)$. Thus the equality of the integrals follows immediately once we substitute $z=Tw$.
\end{proof}

\begin{proposition}\label{prop:Rkl}
For all $l \in \Z$ with $|l| \neq 2$, we have the following equality
\[
R_k^l(t,s)=\zeta_{\GaP}(s,l)I_k(t,s,l). 
\]
Here, we denote by $I_k^{\pm}(t,s,l) = \displaystyle\int_{\mb{H}}\nu_k(t, \ga_{\pm l}; z) y^s \muhyp(z)$ and $I_k(t,s,l)=I_k^{+}(t,s,l)-I_k^{-}(t,s,l)$.
\end{proposition}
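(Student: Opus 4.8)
The plan is to substitute the lattice‑sum expression for $E_{\infty,0}(z,s)$ from Proposition~\ref{Es1} into the definition \eqref{F-R1} of $R_k^l(t,s)$ and then to recognise the resulting sums as Epstein zeta functions via the change‑of‑variables Lemma~\ref{changevariable}. Since $\mu(1)=1$ and $\mu(p)=-1$, Proposition~\ref{Es1} reads $E_{\infty,0}(z,s)=\frac{1}{2\zeta(2s)(1-p^{-2s})}\sum_{d\in\{1,p\}}\mu(d)\sum_{(a,b)\in M_d}{}'\frac{y^s}{|az+b|^{2s}}$, because its two inner sums are exactly the sums over $M_1=\{(p^2m,n)\}$ and $M_p=\{(p^2m,pn)\}$. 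Pulling out the scalar prefactor, it suffices to treat, for each $d\in\{1,p\}$,
\[
B_d:=\int_{Y_0(p^2)}\Big(\sum_{(a,b)\in M_d}{}'\ \frac{y^s}{|az+b|^{2s}}\Big)\,F_k^l(t,z)\,\muhyp(z),
\]
and to show $B_d=\big(\sum_{\Phi\in Q_l(p^2)/\GaP}\zeta_{\Phi,d}(s)\big)\,I_k(t,s,l)$; comparison with Definition~\ref{zeta-Ga} then gives $R_k^l(t,s)=\zeta_{\GaP}(s,l)\,I_k(t,s,l)$, the factor $\zeta(2s)$ matching because both $\zeta_{\Phi,d}$ and the primed lattice sum range over all (not just primitive) pairs.

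For $\Re(s)$ large everything converges absolutely ($h_t$ is of rapid decay), so I would first interchange the integral with the two sums and then invoke meromorphic continuation at the end. The core step is an unfolding. Using the $\GaP$‑equivariant bijection $\GaP_l/\GaP\cong Q_l(p^2)/\GaP$ I rewrite $\sum_{\ga\in\GaP_l}$ as $\sum_{\Phi}\sum_{\de\in\GaP_\Phi\backslash\GaP}$ with $\ga=\de^{-1}\ga_\Phi\de$, where $\GaP_\Phi=\SL_2(\Z)_\Phi$ by Proposition~\ref{stabPhi}. The invariance to be checked is that $\nu_k(t,\de\ga\de^{-1};\de z)=\nu_k(t,\ga;z)$ and that, writing $(a',b')=(a,b)\de^{-1}$, one has $\Im(\de z)^s/|a'\,\de z+b'|^{2s}=y^s/|az+b|^{2s}$; hence the summand $\nu_k(t,\ga;z)\,\frac{y^s}{|az+b|^{2s}}\,\muhyp$ is invariant under the diagonal action $(\ga,(a,b),z)\mapsto(\de\ga\de^{-1},(a,b)\de^{-1},\de z)$. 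Granting this, unfolding the conjugation sum against the fundamental domain replaces $\int_{Y_0(p^2)}$ by $\int_{\GaP_\Phi\backslash\mathbb{H}}$, and unfolding the lattice sum modulo $\GaP_\Phi=\SL_2(\Z)_\Phi$ yields
\[
B_d=\sum_{\Phi\in Q_l(p^2)/\GaP}\ \ \sum_{(a,b)\in M_d/\SL_2(\Z)_\Phi}\ \ \int_{\mathbb{H}}\nu_k(t,\ga_\Phi;z)\,\frac{y^s}{|az+b|^{2s}}\,\muhyp(z).
\]

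Now I would apply Lemma~\ref{changevariable} orbit by orbit, splitting $M_d/\SL_2(\Z)_\Phi$ by the sign of $\Phi\cdot(a,b)$. The pairs with $\Phi\cdot(a,b)>0$ give $\frac{1}{(\Phi\cdot(a,b))^s}I_k^{+}(t,s,l)$, which sum over $M_d^{\Phi}/\SL_2(\Z)_\Phi$ to exactly $\zeta_{\Phi,d}(s)\,I_k^{+}(t,s,l)$. For the pairs with $\Phi\cdot(a,b)<0$ one applies the lemma to $-\ga_\Phi$, which has trace $-l$, satisfies $\nu_k(t,-\ga_\Phi;z)=\nu_k(t,\ga_\Phi;z)$, and reduces to $\ga_{-l}$; these produce the $I_k^{-}(t,s,l)$ term together with the Epstein zeta of $-\Phi$, and the fixed‑point‑free involution $\Phi\mapsto-\Phi$ on $Q_l(p^2)/\GaP$ (corresponding to $\ga_\Phi\mapsto\ga_\Phi^{-1}$) reindexes this back to $\sum_\Phi\zeta_{\Phi,d}(s)$. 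Collecting the two sign‑cases gives $B_d=\big(\sum_\Phi\zeta_{\Phi,d}(s)\big)\,I_k(t,s,l)$, the relative sign between $I_k^{+}$ and $I_k^{-}$ being precisely the one recorded in Lemma~\ref{changevariable}; reinserting $\frac{\mu(d)}{2\zeta(2s)(1-p^{-2s})}$ and summing over $d$ then yields the claim through Definition~\ref{zeta-Ga}.

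The hard part will be the bookkeeping in this final step. Two points require care: verifying the diagonal $\GaP$‑invariance $\nu_k(t,\de\ga\de^{-1};\de z)=\nu_k(t,\ga;z)$ for $k=2$, where the phase $j_{\ga}(z,2)H(z,\ga z)$ must be shown invariant under simultaneous conjugation and point‑translation (the factor $H^{k/2}$ exactly compensating the automorphy factor $j_\ga$); and pinning down the precise sign with which $I_k^{+}$ and $I_k^{-}$ enter as $\Phi\cdot(a,b)$ changes sign, which is governed by the orientation‑reversing nature of the substitution in Lemma~\ref{changevariable} for negative values together with the identity $\ga_{-l}=-\ga_{+l}^{-1}$. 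The convergence and unfolding issues over the non‑compact fundamental domain are routine once absolute convergence for large $\Re(s)$ is secured and the standard meromorphic continuations are invoked.
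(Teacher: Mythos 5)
Your proposal follows essentially the same route as the paper's proof: substitute the lattice-sum form of $E_{\infty,0}$ from Proposition~\ref{Es1}, unfold the integral over $Y_0(p^2)$ against the $\Gamma_0(p^2)$-action, apply Lemma~\ref{changevariable} orbit by orbit, and split according to the sign of $\Phi\cdot(m,n)$, treating the negative part via $-\Phi=\Phi_{-\gamma}$ and trace $-l$. The only difference is organizational --- the paper unfolds the componentwise free action on the pairs $(\Phi,(m,n))\in S_d^{\pm}(l)$ in a single step rather than unfolding conjugacy classes and lattice orbits separately --- and the two delicate points you flag (the diagonal invariance of $\nu_k$ for $k=2$ and the relative sign of $I_k^{+}$ versus $I_k^{-}$) are precisely the ones the paper itself dispatches with a ``small check''.
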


\begin{proof}
From Proposition \ref{Es1}, we obtain
\[
E_{\infty,0}(z,s)= \frac{1}{2 \zeta(2s) (1-p^{-2s})}\sum_{d\in\{1,p\}} \mu(d)\sum_{(m,n) \in M_d} \frac{y^s}{\vert m z +n\vert^{2s}}.
\]
From equations \eqref{F-R} and  \eqref{F-R1}, we now have 
\begin{align*}
R^l_k(t,s) & =\frac{1}{2 \zeta(2s) (1-p^{-2s})}\sum_{d\in\{1,p\}} \mu(d)\int_{Y_0(p^2)} \sum_{\tr (\ga) =l}  \sum_{(m,n) \in M_d}  \nu_k(t, \ga; z) \frac{y^s}{\vert m z+n \vert^{2s}} \muhyp(z).
\end{align*}
For a fixed $l \in \Z$ with $|l| \neq 2$,  denote
\begin{align*}
S_d^{\pm}(l) & =\Big\{\big(\Phi, (m,n) \big) | \Phi \in Q_l(p^2), (m,n) \in M^{\pm \Phi}_d\Big\},
\end{align*}
and 
\[
\sigma^{\pm}_d(l, t,z)  =\sum_{S_d^{\pm}(l)} \nu_k(t, \ga; z) \frac{y^s}{\vert m z+n \vert^{2s}}. 
\]
Recall the identification of the quadratic form $\Phi$ with the matrix $\ga_{\Phi}$  (see \S~\ref{Quadraticforms}). For simplicity of notation,  we write $\ga_{\Phi}=\ga$. For a fixed $\ga$,  we now break the summations inside the integral into two parts to get a simplification 
\[
 \sum_{\tr(\ga) =l}   \sum_{(m,n) \in M_d}  \nu_k(t, \ga; z) \frac{y^s}{\vert m z+n \vert^{2s}} = \sum_{\tr (\ga) =l} ( \sigma_d^{+}(l, t, z)+\sigma_d^{-}(l, t, z)).
\]
The congruence subgroup $\GaP$ acts freely on $S_d^{\pm}$ component wise
\[
\al \cdot \big(\Phi, (m,n)\big) = \big(\Phi \circ \al, (m,n)\al\big).
\]
Hence, we have 
\begin{align*}
R^l_k(t,s) & =\frac{1}{2 \zeta(2s) (1-p^{-2s})}\sum_{d\in\{1,p\}} \mu(d)\int_{Y_0(p^2)} \sum_{\tr (\ga) =l}  
(\sigma_d^{+}(l, t, z)+\sigma_d^{-}(l, t, z)) \muhyp(z).
\end{align*}
Recall that $\Phi \circ \al$ corresponds to $\al^{-1} \ga \al$.  Following \cite[p. 86]{Grados:Thesis}, write $(m_{\al},n_{\al})=(m,n)\al$. As in loc. cit., a small check shows that $\nu_k(t, \al^{-1}\ga\al; z)= \nu_k(t, \ga; \al z)$ and $ \frac{y^s}{\vert m_{\al}z+n_{\al}\vert^{2s}}=\frac{(\Im \al z)^s}{\vert m \al z+n\vert^{2s}}$. We deduce that
\begin{align*}
\sigma_d^{\pm}(l,t,z) & = \sum_{(\Phi, (m,n)) \in S^{\pm}_d/\GaP} \sum_{\al \in \GaP} \nu_k(t, \al^{-1}\ga\al; z) \frac{y^s}{\vert m_{\al}z+n_{\al}\vert^{2s}}\\
& = \sum_{(\Phi, (m,n)) \in S^{\pm}_d/\GaP} \sum_{\al \in \GaP} \nu_k(t, \ga; \al z) \frac{(\Im \al z)^s}{\vert m \al z+n\vert^{2s}}.
\end{align*}
Since the  group $\GaP$ acts on the set $S^{\pm}_d(l)$ component wise, we have
\[
S^{\pm}_d(l)/\GaP = \left\{(\Phi, (m,n)) | \Phi \in Q_l(p^2)/\GaP,  (m,n) \in M^{\pm \Phi}_d/\GaP_{\Phi}\right\}.
\]
Using Lemma~\ref{changevariable}, we deduce that
\begin{align*}
\int_{Y_0(p^2)} \sigma_d^{+}(l,t,z) \muhyp(z) & = \sum_{(\Phi, (m,n)) \in S^{+}_d(l)/\GaP} \int_{\mb{H}} \nu_k(t, \ga; z) \frac{y^s}{\vert m z+n\vert^{2s}} \muhyp(z)\\
&= \sum_{\Phi\in Q_l(p^2)/\GaP} \sum_{(m,n) \in M^{ \Phi}_d/\GaP_{\Phi}} \frac{1}{\Phi_\ga \cdot (m,n)^s} \int_{\mb{H}} \nu_k(t, \ga_{ l}; z) y^s \muhyp(z)\\
& = I_k^{+}(t,s,l) \zeta_{\GaP}(s,l). 
\end{align*}
Observe hat $-\Phi_\ga = \Phi_{-\ga}$ and $S_d^{-}(l)=S_d^{+}(-l)$. Hence, we obtain
\[
\int_{Y_0(p^2)} \sigma_d^{-}(t,z) \muhyp(z)= I_k^{-}(t,s,l) \zeta_{\GaP}(s,l)
\]
and the proposition now follows.
\end{proof}
\subsubsection{Spectral expansions}
Recall that the hyperbolic Laplacian $\De_k$ defined in \eqref{hyp-lap} acts as a positive self-adjoint operator on $L^2(\GaP\backslash \mb{H},k)$---the space of square integrable automorphic forms of weight $k$ \cite[Definition 
3.1.1, p. 22]{MR1437298}. Both these operators have the same discrete spectrum \cite{MR1513277} say
\[
0=\la_0 <\la_1 \leq \la_2 \cdots. 
\]
For weights $k\in \{0,2\}$, the eigenspaces $L^2_{\la_j}(\GaP\backslash \mb{H},k)$ corresponding to eigenvalue $\la_j\neq 0$ are isomorphic via the {\it Maass} operators of weight $0$:
\[
\La_0=iy\frac{\pa}{\pa x}+y\frac{\pa}{\pa y}: L^2_{\la_j}(\GaP\backslash \mb{H},0) \to L^2_{\la_j}(\GaP\backslash \mb{H},2).
\]
We write $\la_j=1/4+r_j^2$ where $r_j$ is real or purely imaginary and let $\{u_j\}$ be an orthonormal basis of eigenfunctions of $\De_0$ corresponding to $\la_j$.

\begin{theorem}{\em \cite[Theorem 1.5.7, p. 16]{Grados:Thesis}}\label{trf}
The spectral expansions are given by
\begin{align*}
K_0(t,z) & =
\frac{1}{v_{\Ga_0(p^2)}}+\sum_{j=1}^\infty h_t(r_j)|u_j(z)|^2 +\frac{1}{4\pi}\sum_{P \in \partial(X_0(p^2))}\int_{-\infty}^{\infty}h_t(r) \left \vert E_{P,0}\left(z,\frac{1}{2}+ir\right)\right\vert^2dr,\\
K_2(t,z) & = g_{p^2} F(z) +\sum_{j=1}^{\infty} \frac{h_t(r_j)}{\la_j} \vert \La_0 u_j (z) \vert^2
+\frac{1}{4\pi} \sum_{P \in \partial(X_0(p^2))} \int_{-\infty}^{\infty} h_t(r) \left\vert E_{P,2}\left(z,\frac{1}{2}+ir\right) \right\vert^2 dr.
\end{align*}
\end{theorem}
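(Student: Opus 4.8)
The plan is to recognize $K_0(t,z)$ and $K_2(t,z)$ as diagonal restrictions of automorphic point-pair kernels and to apply the spectral resolution of the weight-$k$ Laplacian $\De_k$ on $L^2(\GaP\backslash\mb{H},k)$. The starting point is that the inverse Selberg/Harish-Chandra transforms $\phi_0(t,\cdot)$ and $\phi_2(t,\cdot)$ in \eqref{k_0} are constructed precisely so that the corresponding weight-$k$ point-pair kernel $\nu_k(t,\ga;z)$, viewed as an integral operator on $L^2(\mb{H},k)$, acts on a $\De_k$-eigenfunction of eigenvalue $\la=1/4+r^2$ by multiplication by $h_t(r)$. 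I would first confirm this eigenvalue relation by testing against the standard weight-$k$ eigenfunctions $y^{1/2+ir}$ on $\mb{H}$; this is exactly the computation that dictates the defining integrals for $\phi_0$ and $\phi_2$, including the role of the twisting factors $j_\ga(z,2)$ and $H(z,\ga z)$ in the weight-$2$ case.

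Next I would automorphize. The factor $\frac12$ in \eqref{auto-ker} accounts for $\pm I\in\GaP$, so that the double kernel $\frac12\sum_{\ga\in\GaP}\nu_k(t,\ga;z,w)$ descends to $(\GaP\backslash\mb{H})^2$. By Selberg's spectral theorem, $L^2(\GaP\backslash\mb{H},k)$ decomposes into a discrete part, spanned by an orthonormal system of $\De_k$-eigenfunctions, together with a continuous part furnished by the Eisenstein series $E_{P,k}(z,\tfrac12+ir)$ over the cusps $P\in\partial(X_0(p^2))$. Expanding the kernel against this resolution, using the eigenvalue property, and restricting to the diagonal $w=z$ yields
\[
K_k(t,z)=\sum_{j} h_t(r_j)\,|\psi_j^{(k)}(z)|^2+\frac{1}{4\pi}\sum_{P}\int_{-\infty}^{\infty}h_t(r)\,\Big|E_{P,k}\big(z,\tfrac12+ir\big)\Big|^2\,dr,
\]
where $\{\psi_j^{(k)}\}$ is an orthonormal basis of the discrete part for weight $k$. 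Convergence and the legitimacy of interchanging the $\ga$-sum with the spectral integral follow from the rapid (Gaussian) decay of $h_t$.

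It remains to identify the discrete contributions. For $k=0$ the eigenvalue $\la_0=0$ corresponds to the constant $u_0=v_{\GaP}^{-1/2}$, and since $\la_0=1/4+r_0^2$ forces $h_t(r_0)=1$, this term contributes $v_{\GaP}^{-1}$, while the remaining $\la_j>0$ terms give $\sum_{j\ge 1}h_t(r_j)|u_j(z)|^2$, producing the first formula. For $k=2$ I would split the weight-$2$ discrete spectrum into its eigenvalue-zero part and its strictly positive part. The strictly positive eigenspaces are obtained from the weight-$0$ ones via the Maass raising operator $\La_0$: the normalized weight-$2$ eigenfunction attached to $u_j$ is $\la_j^{-1/2}\La_0 u_j$, since $\|\La_0 u_j\|^2=\la_j$, so this part contributes $\sum_{j\ge 1}\frac{h_t(r_j)}{\la_j}|\La_0 u_j(z)|^2$. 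The eigenvalue-zero part of weight $2$ is exactly the image of the holomorphic cusp forms $S_2\big(\GaP\big)$: a holomorphic $f$ gives the weight-$2$ Maass form $yf(z)$ with $\De_2(yf)=0$, and for a Petersson-orthonormal basis $\{f_1,\dots,f_{g_{p^2}}\}$ one has $\sum_j |yf_j(z)|^2=\Im(z)^2\sum_j|f_j(z)|^2=g_{p^2}F(z)$ by \eqref{F}; together with $h_t=1$ at $\la=0$ this yields the leading term $g_{p^2}F(z)$.

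The main obstacle will be the careful bookkeeping of the continuous spectrum and of the weight-$2$ normalizations rather than any single hard estimate: one must set up Selberg's spectral resolution (including the Eisenstein contribution over all cusps) rigorously, justify the weight-$2$ eigenvalue property of $\phi_2$ through the $H(z,\ga z)$ and $j_\ga(z,2)$ factors, and track the $\la_j^{-1/2}$ normalization from $\La_0$ so that the holomorphic part separates cleanly as $g_{p^2}F(z)$. Since the statement is quoted as \cite[Theorem 1.5.7]{Grados:Thesis}, I would follow that reference for the remaining analytic details.
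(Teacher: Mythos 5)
Your outline is correct and is essentially the standard proof of this spectral expansion: the paper itself gives no argument, quoting the statement directly from \cite[Theorem 1.5.7]{Grados:Thesis}, and the route you describe (point-pair kernel acting by $h_t(r)$ on $\De_k$-eigenfunctions, Selberg's spectral resolution with the cuspidal Eisenstein contribution, the constant function giving $v_{\GaP}^{-1}$ in weight $0$, the Maass-operator normalization $\|\La_0 u_j\|^2=\la_j$ for $\la_j>0$, and the identification of the weight-$2$ kernel of $\De_2$ with $y\,S_2(\GaP)$ yielding $g_{p^2}F(z)$) is exactly the argument carried out in that reference. No gaps.
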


\subsection{Different contributions in the Rankin-Selberg transform of the Arakelov metric}
We now prove Proposition~\ref{contb} in this subsection that will determine the term $\RG$ of our Theorerm~\ref{RSconst}. 
To obtain the same, let us first define the following quantities for weights $k \in \{0,2\}$:
\begin{equation}
\begin{aligned}
D_0(t,z) &=\sum_{j=1}^\infty h_t(r_j)|u_j(z)|^2, \quad D_2(t,z)=\sum_{j=1}^{\infty} \frac{h_t(r_j)}{\la_j} \vert \La_0 u_j (z) \vert^2,\\
C_k(t,z) & = \frac{1}{4\pi} \sum_{P \in \partial(X_0(p^2))} \int_{-\infty}^{\infty} h_t(r) \left\vert E_{P,k}\left(z,\frac{1}{2}+ir\right) \right\vert^2 \, dr+\frac{2-k}{2}\frac{1}{v_{\Ga_0(p^2)}};
\end{aligned}
\end{equation}
and write $D=D_2-D_0$ and $C=C_2-C_0$. From Theorem \ref{trf},  we then have
\begin{equation}\label{K2-K0-expn}
\begin{aligned}
K_2(t,z)-K_0(t,z) & =g_{p^2} F(z) +D(t,z) + C(t,z).
\end{aligned}
\end{equation}
On the other hand from \eqref{auto-ker-parts}
\begin{equation}\label{K2-K0-motion}
\begin{aligned}
K_2(t,z)-K_0(t,z)&= H(t,z)+\sE(t,z)+P(t,z).
\end{aligned}
\end{equation}
Combining these two equations, we obtain the identity
\begin{equation}\label{final}
  g_{p^2} F(z)+D(t,z)+C(t,z)=H(t,z)+\sE(t,z)+P(t,z).
\end{equation}
Note that our  equation~\eqref{final} is exactly same as the identity of 
~\cite[p. 72]{Grados:Thesis} as $\gp=P-C$ in loc. cit.   By integrating with respect to $E_{\infty,0}(z,s)\muhyp$,  we obtain the key identity 
\begin{equation}\label{key-form}
g_{p^2}R_F(s)=-R_D(t,s) +R_{H}(t,s) + R_{\sE}(t,s) + R_{P-C}(t,s)
\end{equation}
as announced in the beginning of this section.

\begin{proposition} \label{contb}
The contributions in the Rankin-Selberg transform arising from different motions are given as follows:
\begin{itemize}
\item [(i)] The discrete contribution $R_D(t,s)$ is holomorphic at $s=1$ for all $t$ and $\Rdis_0(t):=R_D(t,1)$ satisfies
\[
\Rdis_0(t)\to 0
\]
as $t \to \infty$.
\item [(ii)]The hyperbolic contribution $R_H(t,s)$ is holomorphic at $s=1$ for all $t$ and $\Rhyp_0(t):=R_H(t,1)$ is of the form
\begin{equation*}
 \Rhyp_0(t) = \Ehyp(t) - \frac{t-1}{2 v_{\GaP}}
\end{equation*}
where $\lim_{t \to \infty} \Ehyp(t) = \frac{1}{2 v_{\GaP}} O_{\epsilon}(p^{2 \epsilon})$.
\item [(iii)] The elliptic contribution $R_{\sE}(t,s)$ at $s=1$ is of the form
\[
R_{\sE}(t,s)=\frac{\Rell_{-1}(t)}{s-1} + \Rell_{0}(t) + O(s-1),
\]
where $\Rell_{-1}(t)$ and $\Rell_{0}(t)$ have finite limits as $t \to \infty$, and $\Rell_0= \displaystyle \lim_{t \rightarrow \infty} \Rell_0(t)=o(\log(p))$.
\item [(iv)] 
The parabolic and spectral contribution
$R_{P-C}(t,s)$ is given by:
\[
R_{P-C}(t,s)= \frac{\Rpar_{-1}(t)}{s-1} + \Rpar_{0} (t) + O(s-1).
\]
Here,  $\Rpar_0(t) = \frac{t+1}{2 v_{\GaP}} + \Epar(t)$ with $\lim_{t \to \infty} \Epar(t) = 
\frac{1-\log(4\pi)}{4\pi} + O\left(\frac{\log p}{p^2}\right)$.
\end{itemize}
\end{proposition}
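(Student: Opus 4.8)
The plan is to read off all four contributions from the master identity \eqref{key-form},
\[
g_{p^2}R_F(s)=-R_D(t,s)+R_H(t,s)+R_{\sE}(t,s)+R_{P-C}(t,s),
\]
handling the three geometric terms $R_H,R_{\sE},R_{P-C}$ through Proposition~\ref{prop:Rkl}, which expresses every trace-$l$ Rankin--Selberg transform as the product $\zeta_{\GaP}(s,l)\,I_k(t,s,l)$, and the spectral term $R_D$ through Theorem~\ref{trf}. In each case the pole and constant term at $s=1$ are obtained by combining the Laurent expansion $\zeta_{\GaP}(s,l)=a_{-1}(l)(s-1)^{-1}+a_0(l)+O(s-1)$ of Definition~\ref{zeta-Ga} with the explicit $s$-dependence of the integrals $I_k^{\pm}(t,s,l)=\int_{\mb{H}}\nu_k(t,\ga_{\pm l};z)\,y^s\,\muhyp(z)$, which are elementary once the defining formulas \eqref{k_0} for $\phi_k$ are inserted.

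For the discrete contribution (i), I would first observe that $D=D_2-D_0$ is assembled from the $L^2$-eigenfunctions $u_j$ with $\la_j>0$, hence decays rapidly at every cusp, so its Rankin--Selberg transform is holomorphic away from the residue contributed by $E_{\infty,0}$. That residue equals $\tfrac{1}{v_{\GaP}}\sum_j h_t(r_j)\big(\la_j^{-1}\!\int_{Y_0(p^2)}|\La_0 u_j|^2\muhyp-\int_{Y_0(p^2)}|u_j|^2\muhyp\big)$, and the Maass-operator identity $\int_{Y_0(p^2)}|\La_0 u_j|^2\muhyp=\la_j\int_{Y_0(p^2)}|u_j|^2\muhyp$ forces it to vanish; so $R_D(t,s)$ is holomorphic at $s=1$, and setting $s=1$ with $h_t(r_j)=e^{-t\la_j}$, $\la_j\geq\la_1>0$, dominated convergence gives $\Rdis_0(t)\to0$. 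The hyperbolic contribution (ii) is treated in the same spirit: for $|l|>2$ one has $\dis(\Phi)=l^2-4>0$, and I would verify that the weight-$2$ and weight-$0$ integrals agree at $s=1$, i.e.\ $(I_2-I_0)(t,1,l)=0$, so the simple pole of $\zeta_{\GaP}(s,l)$ is invisible to $R_H$ and holomorphy follows; summing over hyperbolic classes and using the residue formula \eqref{res-ep*} with fundamental units $\ep_\Phi$ then produces the shape $\Rhyp_0(t)=\Ehyp(t)-\frac{t-1}{2v_{\GaP}}$, with the bound on $\lim_{t\to\infty}\Ehyp(t)$ coming from estimating $\sum_\Phi\log\ep_\Phi$ over the relevant classes, yielding the $O_{\epsilon}(p^{2\epsilon})$ factor.

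For the parabolic and spectral contribution (iv), the parabolic terms $P=P_2-P_0$ correspond to $|\tr(\ga)|=2$; I would evaluate their Rankin--Selberg transforms by direct integration in horocyclic coordinates at the cusps, which supplies the simple pole $\Rpar_{-1}(t)(s-1)^{-1}$ together with the linear-in-$t$ piece $\frac{t+1}{2v_{\GaP}}$. The continuous-spectrum correction $C=C_2-C_0$ is computed from the spectral side of Theorem~\ref{trf}; here the constant terms of the scattering coefficients enter, and I would substitute the explicit values of $\mathcal{C}_{\infty,\infty}^{\Gamma_0(p^2)}$ and $\mathcal{C}_{\infty,0}^{\Gamma_0(p^2)}$ from Corollaries~\ref{Secondterminfty} and~\ref{Secondterm}. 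Assembling $R_P$ with $-R_C$ and expanding about $s=1$ then isolates $\Epar(t)$, whose limit $\frac{1-\log(4\pi)}{4\pi}+O(\log p/p^2)$ follows from the $p\to\infty$ asymptotics of these constant-term data.

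The main obstacle is the elliptic contribution (iii), namely $l\in\{0,\pm1\}$ with $\dis(\Phi)=l^2-4<0$, because the square-free factorization of Epstein zeta functions exploited by Abbes--Ullmo is unavailable for $N=p^2$. My plan is to leave $\zeta_{\GaP}(s,l)$ intact as in Definition~\ref{zeta-Ga} and to \emph{bound}, rather than evaluate, its Laurent coefficients $a_{-1}(l)$ and $a_0(l)$. The residue $a_{-1}(l)$ is obtained by summing the residue formula \eqref{res-ep-ell} over the finite quotient $Q_l(p^2)/\GaP$, whose size is governed by class numbers of binary quadratic forms of discriminant $l^2-4$; combining M.\ Grados Fukuda's observation with the estimates for such zeta functions in Zagier~\cite{MR631688} then controls $a_0(l)$. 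After multiplying by the finite integrals $I_k(t,s,l)$ and passing to $t\to\infty$ this gives $\Rell_0=o(\log p)$. The delicate point will be extracting $a_0(l)$ with an error uniform in $p$ in the absence of an explicit Dirichlet-series factorization, and this is exactly where the single-prime hypothesis $N=p^2$ keeps the class count manageable.
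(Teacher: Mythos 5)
Your overall architecture matches the paper's: you start from the identity \eqref{key-form}, reduce the three geometric contributions to Proposition~\ref{prop:Rkl}, and handle the elliptic term by bounding the Laurent coefficients of $\zeta_{\GaP}(s,l)$ via the class-count \eqref{card-bd} and the identification of $\zeta_\Phi$ with a Dedekind zeta function of class number one --- that is exactly what the paper does for parts (i), (iii) and (iv), and your sketch of (i) via the Maass-operator identity is the content of the cited Lemma 5.2.4 of Mayer.

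There is, however, a genuine gap in part (ii), at the point where you claim the bound $\lim_{t\to\infty}\Ehyp(t)=\frac{1}{2v_{\GaP}}O_\epsilon(p^{2\epsilon})$ will come ``from estimating $\sum_\Phi\log\ep_\Phi$ over the relevant classes.'' The quantity you must control is not a finite sum of regulators but the regularized integral $\int_0^{\infty}\bigl(\Theta_{\GaP}(\xi)-1\bigr)\,d\xi$, where $\Theta_{\GaP}$ aggregates the \emph{entire} hyperbolic length spectrum; the raw sum $\sum_{|l|>2}\sum_{\Phi}\log\ep_\Phi$ diverges (by the prime geodesic theorem the number of classes with $\log\ep_\Phi\le X$ grows exponentially), so no termwise estimate of class numbers times regulators can produce the cancellation against the subtracted constant $1$. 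The paper instead identifies this integral with $\lim_{s\to1}\bigl(Z'_{\GaP}(s)/Z_{\GaP}(s)-\tfrac{1}{s-1}\bigr)-1$ via \cite[Lemma 3.3.6]{MR1437298} and then invokes the nontrivial bound \eqref{zeta-lim} on the constant term of the logarithmic derivative of the Selberg zeta function for congruence subgroups (the Jorgenson--Kramer type input cited as \cite{MR1876283}); this spectral input, which encodes the absence of exceptionally small eigenvalues and the distribution of short geodesics uniformly in $p$, is the essential missing ingredient in your plan and cannot be replaced by direct arithmetic counting. A smaller inaccuracy in part (iv): the linear-in-$t$ term $\tfrac{t+1}{2v_{\GaP}}$ does not arise from direct integration of the parabolic orbital sum alone but from the combined term $\M_3$ involving the scattering coefficient $\phi_{\infty,\infty}(\tfrac12-ir)$; the individual pieces $P_k$ and $C_k$ are not separately integrable against $E_{\infty,0}(z,s)\muhyp$, so the splitting into the Mellin transforms $\M_1,\dots,\M_4$ (and, for $\M_1$, the case analysis $p\mid\gamma$, $p\,\|\,m$, $p^2\mid m$ special to $N=p^2$) has to be set up before expanding at $s=1$.
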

Note that proof of part (i) of Proposition~\ref{contb} follows from \cite[Lemma 5.2.4]{MR3232770}. 
The explicit description of the modular curves are not used in this portion of 
 \cite{MR3232770} and hence the same proof works verbatim in our case also.
The proof of remaining three parts will be given in the next three subsections.

 As mentioned in the introduction, the underlying strategies for proving the other parts of 
 Proposition~\ref{contb} are also same as that of Abbes-Ullmo~\cite{MR1437298},  Mayer  \cite{MR3232770} and  M. Grados Fukuda~\cite{Grados:Thesis}. However, they used the squarefree assumption of $N$ crucially to write down the Eisenstein series and 
 hence implement the strategy of Zagier \cite{MR633667} involving Selberg's trace formula. We  prove 
Proposition~\ref{Es1} that is analogous to Proposition 3.2.2 of \cite{MR1437298}. The hyperbolic and elliptic contributions are obtained using that.  We made the necessary changes to the computations of the above mentioned papers to suit our specific modular curves $X_0(p^2)$.

 In the rest of this section, 
we simplify the calculation of the Rankin-Selberg transforms of various terms above. For all $l \in \Z$, recall the definitions of  $F_k^l(t, z)$ and $R_k^l(t,s)$ from the  previous section (see ~\eqref{F-R} and ~\eqref{F-R1}).
Then observe that $R_H$, $R_{\sE}$ and $R_P$ can be obtained by summing up $R_2^l-R_0^l$ respectively over $\vert l \vert >2$, $\vert l \vert <2$ and $\vert l \vert =2$. 

We now  prove Proposition~\ref{contb} which is then used to get an expression for
$\RG$. Consequently we derive an asymptotic expression of $\Gcan(\infty,0)$ in terms of $p$.

\subsubsection{The hyperbolic contribution} \label{hyperbolic}
Recall that  the hyperbolic contribution in the Rankin-Selberg transform is determined by a suitable theta function. We proceed to define these theta functions in our context.

Let $\ga \in \GaP$ be a hyperbolic element, i. e., $\tr(\ga)=l; |l|>2$.  Let $v$ the eigenvalue of $\ga$ 
with $v^2>1$. Recall that the norm of the matrix $\ga$ 
is defined to be $N(\ga):=v^2$.
Since $\tr(\ga)=l$, it is easy to see that $n_l=(l+\sqrt{l^2-4})/2$ is the larger eigenvalue of $\ga$.
The theta function for $X_0(p^2)$ is defined by~\cite{MR0473166}
\[
\Theta_{\Ga_0(p^2)}(\xi)=\sum_{|l|>2} \sum_{\Phi \in Q_l(p^2) /\Gamma_0(p^2)} \frac{\log \ep_{\Phi}}{\sqrt{l^2-4}} \frac{1}{\sqrt{4\pi \xi}}e^{-\frac{\xi^2+(\log n_l^2)^2}{4\xi}}.
\]
 Note that this function is exactly equal to the one defined in \cite[p. 54]{MR1437298} as $\Phi \mapsto \ga_{\Phi}$, $N(\ga_0)=\ep_{\Phi}^2$ and $N(\ga)=n_l^2$.
	
\begin{proposition}
The hyperbolic contribution $R_H(t,s)$ in the trace formula  is holomorphic at $s=1$ and
it has the following integral representation:
\begin{align*}
R_{H}(t,1)=-\frac{1}{2v_{\GaP}}\int_0^{t} \Theta_{\Ga_0(p^2)}(\xi) \, d\xi.
\end{align*}
\end{proposition}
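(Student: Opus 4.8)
The plan is to express the hyperbolic contribution $R_H(t,s)$ as a sum over traces $|l|>2$ of the quantities $R_2^l(t,s)-R_0^l(t,s)$, invoke Proposition~\ref{prop:Rkl} to factor each summand as $\zeta_{\GaP}(s,l)\,I(t,s,l)$ with $I=I_2-I_0$, and then analyze the behavior at $s=1$. Since each $\zeta_{\GaP}(s,l)$ has a simple pole at $s=1$ with residue $a_{-1}(l)$, the holomorphy of $R_H$ at $s=1$ must come from the vanishing of the archimedean factor $I(t,1,l)$ at $s=1$, or more precisely from the interplay of the two. So first I would record the Laurent data: write $\zeta_{\GaP}(s,l)=\frac{a_{-1}(l)}{s-1}+a_0(l)+O(s-1)$ as in Definition~\ref{zeta-Ga}, and separately compute the Taylor expansion of $I(t,s,l)=I^+(t,s,l)-I^-(t,s,l)$ at $s=1$.

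The key computation is the archimedean integral $I_k^\pm(t,s,l)=\int_{\mb H}\nu_k(t,\ga_{\pm l};z)\,y^s\,\muhyp(z)$. Here I would exploit that $\ga_{\pm l}$ is a fixed hyperbolic element with eigenvalue $n_l$, diagonalize its action, and pass to coordinates adapted to its geodesic so that the integral becomes a one-dimensional transform of $\phi_k(t,\cdot)$ against $y^{s}$. The crucial structural fact to establish is that $I_2-I_0$, evaluated near $s=1$, produces exactly a factor that converts the Selberg/Harish-Chandra transform data into the Gaussian kernel $\frac{1}{\sqrt{4\pi\xi}}e^{-\frac{\xi^2+(\log n_l^2)^2}{4\xi}}$ appearing in $\Theta_{\Ga_0(p^2)}$, after integrating the heat-type parameter. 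Concretely, I expect $I(t,1,l)$ to evaluate to $-\frac{1}{v_{\GaP}}\sqrt{l^2-4}$ times the relevant $\xi$-integral of that Gaussian, so that when multiplied by the residue $a_{-1}(l)=\frac{\log\ep_\Phi}{\sqrt{l^2-4}}$ summed over $\Phi\in Q_l(p^2)/\GaP$ (read off from \eqref{res-ep*}), the factors $\sqrt{l^2-4}$ cancel and one assembles precisely $\Theta_{\Ga_0(p^2)}$.

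To get the clean integral form $R_H(t,1)=-\frac{1}{2v_{\GaP}}\int_0^t\Theta_{\Ga_0(p^2)}(\xi)\,d\xi$, I would differentiate in $t$ or, equivalently, track the $t$-dependence through the inverse transforms $\phi_k(t,u)$ defined in \eqref{k_0}. The test function $h_t(r)=e^{-t(1/4+r^2)}$ satisfies the heat equation in $t$, and the integral $\int_0^t$ arises naturally from integrating the heat kernel relation: the combination $I_2-I_0$ at weight difference $2$ yields a total $t$-derivative whose primitive, vanishing appropriately at $\xi=0$, is the stated integral. This is essentially the mechanism of Zagier's method via the Selberg trace formula, and the factorization from Proposition~\ref{prop:Rkl} plus the theta-function identification paralleling \cite[p.~54]{MR1437298} does the rest.

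\textbf{The main obstacle} I anticipate is verifying holomorphy at $s=1$ cleanly, i.e.\ confirming that the simple pole of $\zeta_{\GaP}(s,l)$ is exactly cancelled. This requires showing that $I(t,s,l)=I_2-I_0$ has a simple zero (or at least the right leading behavior) at $s=1$ uniformly enough that the sum over $l$ converges and no residual pole survives. The weight-$2$ versus weight-$0$ kernels $\phi_2$ and $\phi_0$ differ by the extra factor $\frac{\sqrt{u+4+v^2}-v}{\sqrt{u+4+v^2}+v}$ in \eqref{k_0}, and it is precisely this difference that must generate the cancellation; making that explicit — and matching constants so the final coefficient is exactly $-\frac{1}{2v_{\GaP}}$ — is the delicate bookkeeping step, handled in the same spirit as Mayer \cite{MR3232770} and Grados \cite[p.~86]{Grados:Thesis}.
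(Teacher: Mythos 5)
Your proposal follows essentially the same route as the paper: decompose $R_H(t,s)=\sum_{|l|>2}\zeta_{\GaP}(s,l)\big(I_2(t,s,l)-I_0(t,s,l)\big)$ via Proposition~\ref{prop:Rkl}, cancel the simple pole of $\zeta_{\GaP}(s,l)$ against a simple zero of $I_2-I_0$ at $s=1$, and read off the value from the product of the residue with the first-order coefficient $A_l(t)=-\frac{\pi}{2}\int_0^t\frac{1}{\sqrt{4\pi\xi}}e^{-(\xi^2+(\log n_l^2)^2)/4\xi}\,d\xi$ — the paper simply imports this expansion from Abbes--Ullmo (Propositions 3.3.2--3.3.3 of \cite{MR1437298}) rather than rederiving it. One small correction to your anticipated bookkeeping: the factors $\frac{1}{\pi v_{\GaP}}$ and $\frac{1}{\sqrt{l^2-4}}$ both sit in the residue $\frac{1}{\pi v_{\GaP}}\sum_{\Phi}\frac{\log\ep_{\Phi}}{\sqrt{l^2-4}}$ and the $\frac{1}{\sqrt{l^2-4}}$ does \emph{not} cancel — it survives as the weight in $\Theta_{\Ga_0(p^2)}$ — while $I_2-I_0$ contributes only the $p$-independent coefficient $A_l(t)(s-1)$; the product is the same, so this does not affect the result.
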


\begin{proof}
From Proposition \ref{prop:Rkl}, the hyperbolic contribution is given by 
\[
R_H(t,s)=\sum_{\vert l \vert >2} \zeta_{\GaP}(s,l)\big(I_2(t,s,l)-I_0(t,s,l)\big).
\]
Observe that from \eqref{res-ep*}
\begin{equation}\label{res-zeta-hyp}
\Res_{s=1}\zeta_{\GaP}(s,l) = \frac{1}{\pi v_{\GaP}}  \sum_{\Phi \in Q_l(p^2) \slash \Gamma_0(p^2)} \frac{\log \ep_{\Phi}}{\sqrt{l^2-4}}.%=\frac{1}{2\zeta(2)p(p+1)}  \sum_{\Phi \in Q_l(p^2) / \Gamma_0(p^2)} \frac{\log \ep_{\Phi}}{\sqrt{l^2-4}}.
\end{equation}
Let us consider the integral
\[
A_l(t)=-\frac{\pi}{2}\int_0^t \frac{1}{\sqrt{4\pi \xi}}e^{-\frac{\xi^2+(\log n_l^2)^2}{4\xi}} \, d \xi.
\]

From \cite[Propositions 3.3.2, 3.3.3]{MR1437298},  we obtain
\[
I_2(t,s,l)-I_0(t,s,l)=A_l(t)(s-1)+O\big((s-1)^2\big);
\]
for $\Re(s)<1+\de$ and $\de>0$.

It follows that $R_H(t,s)$ is holomorphic near $s=1$ and has the required form.
\end{proof} 

\begin{proof}[Proof of Proposition \ref{contb} (ii)]
By the above proposition, we write
\[
\Rhyp_0 (t) = R_{H}(t,1) = \Ehyp(t) - \frac{t-1}{2v_{\GaP}}
\]
with
\[
\Ehyp(t) = -\frac{1}{2v_{\GaP}}\left(\int_0^t (\Theta_{\GaP}(\xi)-1)\,d\xi +1\right).
\]
Let $Z_{\GaP}$ be the Selberg's zeta function for $X_0(p^2)$.
Note that by \cite[Lemma 3.3.6]{MR1437298}:
\begin{equation}\label{theta-zeta}
\int_0^{\infty} (\Theta_{\GaP}(\xi)-1)\,d\xi= \lim_{s \to 1}\left(\frac{Z_{\GaP}^{\prime}(s)}{Z_{\GaP}(s)} -\frac{1}{s-1}\right)-1. 
\end{equation}
Using \cite[p. 27]{MR1876283}, we obtain
\begin{equation}\label{zeta-lim}
\lim_{s \rightarrow 1} \left(\frac{Z'_{\Ga_0(p^2)}}{Z_{\Ga_0(p^2)}}-\frac{1}{s-1}\right)=O_{\epsilon}(p^{2\epsilon}). 
\end{equation}
It follows that $\lim_{t \to \infty} \Ehyp(t) =\frac{1}{2 v_{\GaP}} O_{\epsilon}(p^{2\epsilon})$ as required. 
\end{proof}

\subsubsection{The elliptic contribution} \label{elliptic}$\phantom{M}$

Recall that (see \S~\ref{Quadraticforms}) there is a bijective correspondence between matrices of traces $l$ and quadratic forms of discriminant $l^2-4$. The explicit map is given by: 
\[
 \Phi=[a,b,c] \mapsto \gamma_\Phi=\left(\begin{smallmatrix}
\frac{l-b}{2}& -c\\
a  & \frac{l+b}{2}\\
\end{smallmatrix}\right). 
\]
To obtain the elliptic contribution in the trace formula, consider matrices with traces 
$l \in \{0, \pm 1\}$. 

 For $D=b^2-4ac=l^2-4$, consider the imaginary quadratic field $K=\qq(\sqrt{-D})$. In this section, we only consider  $D \in \{1,3\}$. For the complex number
$\theta=\frac{b+\sqrt{D}}{2a}$, set $a_{\theta}=\Z+\Z \theta$
and $\cA$ be the ideal class (same as narrow ideal classes for imaginary quadratic fields) corresponding to $a_{\theta}$. For any number field $K$, let $\zeta_K(s)$ be the Dedekind zeta 
function of $K$. 

Let $\zeta(s, \cA)$ be the partial zeta function associated to a (narrow) ideal class $\cA$ \cite[p.131, Appendix C]{Grados:Thesis}.
By \cite[p.78]{Grados:Thesis}, observe that $\zeta_{\Phi}(s)=\zeta(s, \cA)$.
In loc. cit. the author assumed that $|l|>2$. 
Since all the ingredients to write down  this equation
are there in \cite{MR631688} and valid for $|l| \neq 2$, 
it is easy to see that same proof works for $|l| \neq 2$. 
 
In our case,  the class numbers of the quadratic fields $K=\qq(\sqrt{-D})$ are $1$. We have 
an equality of two different zeta functions
 \begin{equation}
 \label{Dedimp}
 \zeta(s,\cA)=\zeta_K(s).
 \end{equation}
 For any $N \in \N$, let $h_l(N)$ be the cardinality of the set $|Q_l(N) \slash \Ga_0(N)|$. In particular for $l \in \{0, \pm 1\}$, we have $h_l(1)=1$ (since the class numbers of the corresponding imaginary quadratic fields are one). We have the following estimate for $h_l(p^2)$:
\begin{align}\label{card-bd}
\big| Q_l(p^2) \slash \Gamma_0(p^2) \big| \leq  \big| Q_l \slash \SL_2(\Z) \big| \big| \SL_2(\Z):\Gamma_0(p^2)\big| \leq h_l(1) \big| \SL_2(\Z):\Gamma_0(p^2) \big| \leq \big| \SL_2(\Z):\Gamma_0(p^2) \big|.
\end{align}

\begin{proof}[Proof of Proposition \ref{contb} (iii)]
The elliptic contribution is obtained by putting $l=0, 1, -1$ in Proposition~\ref{prop:Rkl}. We have
  \begin{align*}
    R_{\sE}(t,s) = & \sum_{l \in \{0,1,-1\}}\big(I_2(t,s,l)-I_0(t,s,l)\big) \zeta_{\GaP}(s,l). %\\
             %= & \zeta_{\GaP}(s,l) \Big( \cI(t,s,0) + 2 \cI(t,s,1) \Big).
  \end{align*}
  Recall the Laurent series expansions of the following functions [see  Definition~\ref{zeta-Ga} and \cite[\S 3.3.3, p. 57]{MR1437298}]:
  \begin{align*}
\zeta_{\GaP}(s,l) & =\frac{a_{-1}(l)}{s-1}+a_0(l)+O(s-1),\\
I_2(t,s,l)-I_0(t,s,l) & = b_0(t,l)+b_1(t,l)(s-1)+O((s-1)^2).
\end{align*}
Observe that
\begin{align*}
\Rell_{-1}(t) & =\sum_{l=-1}^{1}a_{-1}(l)b_0(t,l),\\
\Rell_{0}(t) & = \sum_{l=-1}^{1} a_0(l)b_0(t,l)+a_{-1}(l)b_1(t,l).
\end{align*}
Note that $b_i(t,l)$ differ from $C_{i,l}(t)$ of \cite[\S 3.3.3, p. 57]{MR1437298} by a multiplicative function that does not depend on $t$ and  therefore $\lim_{t \to \infty} b_i(t,l)$ exists, say $b_i(\infty,l)$.

We now proceed to find an estimate on $\Rell_{0}(t) $. From Definition~\ref{zeta-Ga} and \eqref{Phi-Phi*}, we obtain
\begin{align*}
\zeta_{\GaP}(s,l) &= \frac{1}{2 \zeta(2s) (1-p^{-2s})}\sum_{d\in\{1,p\}} \mu(d)  \frac{1}{(p^2d)^s}\sum_{\Phi \in Q_l(p^2)/\GaP} \zeta_{\Phi^{*d}}(s)\\
&= \frac{1}{2 \zeta(2s) (1-p^{-2s})} \frac{1}{p^{2s}} \left(
\sum_{\Phi \in Q_l(p^2)/\GaP} \zeta_{\Phi^{*1}}(s)-\frac{1}{p^s}\sum_{\Phi \in Q_l(p^2)/\GaP} \zeta_{\Phi^{*p}}(s)\right)\\
&=I_1(s) I_2(s)  
\end{align*}
with 
$I_1(s)=  \frac{1}{2 \zeta(2s) (1-p^{-2s})} \frac{1}{p^{2s}}$
and 
\[
I_2(s)=\sum_{\Phi \in Q_l(p^2)/\GaP} \zeta_{\Phi^{*1}}(s)-\frac{1}{p^s}\sum_{\Phi \in Q_l(p^2)/\GaP} \zeta_{\Phi^{*p}}(s).
\]

For $t \in \{1,p\}$, consider the following series (depending on $l$):
\begin{align*}
J_t(s)=\sum\limits_{\Phi \in Q_l(p^2)\slash \GaP}  \zeta_{\Phi^{*t}}(s).
\end{align*}

To study the Laurent series expansion of the above series, write
\begin{align*}
J_t(s)= \frac{c_{-1,t}}{s-1}+c_{0,t} +O((s-1)).
\end{align*}
For the Dedekind zeta functions $\zeta_K(s)$ with $K=\qq(\sqrt{l^2-4})$ and $l \in \{0,\pm 1\}$, the residues and the constants of the Laurent series expansions
depend only on the fields $K$. 
For $i \in \{0,-1\}$ and $t \in \{1,p\}$, we then get the following bounds of the coefficients of the above expansion using the estimate~\eqref{card-bd}:
\[
\vert c_{i,t}\vert \leq  C_i\cdot\,  \big|\SL_2(\Z):\Gamma_0(p^2)\big|
\]
for some constants $C_i$'s. These constants are determined by the Dedekind zeta functions and are independent of the prime $p$. 
The corresponding  Laurent series expansion is given by 
\begin{align*}
I_2(s)
& =J_1(s)-\frac{1}{p^s} J_p(s) \\
 & =
 \frac{c_{-1,1}}{s-1}+c_{0,1}+O(s-1)-\frac{1}{p^s}\left( \frac{c_{-1,p}}{s-1}+c_{0,p}+O(s-1)\right)\\
  & =
   \frac{c_{-1,1}}{s-1}+c_{0,1}+O(s-1)-\left(\frac{1}{p}-\frac{\log p}{p}(s-1)+O((s-1)^2)\right)\left( \frac{c_{-1,p}}{s-1}+c_{0,p}+O(s-1)\right)\\
    & =
  \frac{1}{s-1}\left(c_{-1,1}-\frac{c_{-1,p}}{p}\right)+\left(c_{0,1}-\frac{c_{0,p}}{p}+\log p\frac{c_{-1,p}}{p}+O(s-1)\right)\\
    & =
  \frac{A_{-1}(p)}{s-1}+A_0(p)+O(s-1);
\end{align*}
with $A_{-1}(p)=c_{-1,1}-\frac{c_{-1,p}}{p}$ and $A_0(p)=c_{0,1}-\frac{c_{0,p}}{p}+\log p\frac{c_{-1,p}}{p}$. 
A small check shows that:
\begin{align*}
I_1(s) =\frac{3}{\pi^2(p^2-1)} + 
\left(\frac{D_1\log p +D_2}{p^2-1}-\frac{6\log p}{\pi^2(p^2-1)}\right)(s-1)+O(s-1)^2; 
\end{align*}
where the constants $D_1$ and $D_2$ are independent of $p$. We conclude that
\begin{align*}
\zeta_{\Ga_0(p^2)}(s,l) &= \frac{1}{2 \zeta(2s) (1-p^{-2s})}\sum_{d\in\{1,p\}} \mu(d)  \frac{1}{(p^2d)^s}\sum_{\Phi \in Q_l(p^2)/\GaP} \zeta_{\Phi^{*d}}(s)\\
&= I_1(s) I_2(s) \\
&=\frac{3 A_{-1}(p)}{\pi^2(p^2-1)} \frac{1}{s-1}+\frac{3 A_0(p)}{\pi^2(p^2-1)}+\left(\frac{D_1\log(p) +D_2}{p^2-1}-\frac{6\log p}{\pi^2(p^2-1)}\right)A_{-1}(p)+O((s-1)). 
\end{align*}
Hence, we obtain $a_{-1}(l)=\frac{3 A_{-1}(p)}{\pi^2(p^2-1)}$  and $a_0(l)=\frac{3 A_0(p)}{\pi^2(p^2-1)}+\left(\frac{D_1\log p +D_2}{p^2-1}-\frac{6\log p }{\pi^2(p^2-1)}\right)A_{-1}(p)$. 
Observe that all the terms in the above expression have $p^2-1$ in the denominator and it 
is of same order in $p$ as $ |\SL_2(\Z):\Gamma_0(p^2)|$. Since the constants 
$D_1$  and $D_2$  are independent of $p$, we get $\Rell_0 =o( \log p)$. 
\end{proof}

\subsubsection{The parabolic and spectral contribution} \label{parabolic}

Recall that we are interested in the term $R_{P-C}$ of \eqref{key-form}. 
Let $a_0(y,s;q,\infty,k)$ be the zero-th Fourier coefficient of $E_{q,k}(z,s)$. It is given by
\begin{align*}
a_0\left(y,s;q,\infty,0\right) & =\de_{q,\infty} y^s +\phi_{q,\infty}(s)y^{1-s},\\
a_0\left(y,s;q,\infty,2\right) & = \de_{q,\infty}y^s+\phi_{q,\infty}(s)\frac{1-s}{s} y^{1-s}.
\end{align*}

Consider the series
\[
\ti{E}_{q,k}(z,s)=E_{q,k}(z,s)-a_0\left(y,\frac{1}{2}+ir;q\infty,k\right). 
\]

For any $z \in \hh$ with $z=x+iy$,  define the functions
\begin{align*}
  p_1(t,y,k) & = \frac{1}{2}\int_{-1/2}^{1/2} 
               \sum_{\substack{\ga \in \Ga_0(p^2),\\ \vert \tr(\ga)\vert=2, \ga \not \in \GaP_{\infty}}} 
               \nu_k(t, \ga; z) dx, \\
  p_2(t,y,k) & =\frac{1}{2} \int_{-1/2}^{1/2} \sum_{\ga \in \GaP_{\infty}} 
               \nu_k(t,\ga; z) \, dx \ 
               - \ \frac{y}{2\pi}\int_{-\infty}^{\infty} h_t(r) \, dr, \\
  p_3(t,y,k) & =-\frac{y}{2\pi} \int_{-\infty}^{\infty} h_t(r) \phi_{\infty,\infty}\left(\frac{1}{2}-ir\right)
               \left(\frac{\frac{1}{2}+ir}{\frac{1}{2}-ir}\right)^{k/2}y^{2ir} \,dr 
               - \frac{2-k}{2} \frac{1}{v_{p^2}},\\
  p_4(t,y,k)  & = -\frac{1}{4\pi} \sum_{q \in \partial X_0(p^2)} \int_{-1/2}^{1/2} 
                \int_{-\infty}^{\infty} h_t(r)  \left\vert 
                \ti{E}_{q,k}\left(x+iy, \frac{1}{2}+ir\right)\right\vert^2\, dr dx. 
\end{align*}
For $j \in \{1,2,3,4\}$, the corresponding Mellin transforms are defined as
\[
\M_j(t,s)=\int_0^{\infty} \big(p_j(t,y,2)-p_j(t,y,0)\big) y^{s-2}dy.
\]

With our assumption on the prime $p$, observe that $g_{p^2} > 1$. By \cite[Lemma 4.4.1]{Grados:Thesis}, we have for $\Re(s) >1$
\begin{equation}\label{p-s-c}
R_{P-C}(t,s) = \M_1(t,s)+\M_2(t,s)+\M_3(t,s)+\M_4(t,s). 
\end{equation}
To study the function $p_1(t,y,k)$,  we examine the matrices 
that appear in the sum. 
Say $\GaInf =\Ga_0(N)_{\infty}=  \left\{\pm\smmat{1}{m}{0}{1} \mid m \in \zz\right\}$ be the parabolic subgroup of $\Ga_0(N)$.
A simple computation  \cite[p. 37]{MR1437298} involving matrices shows that any matrix  in $\SL_2(\zz)$ of trace $2$ is of the form 
$ \left(\begin{smallmatrix}
1-a & b\\
-c & 1+a\\
\end{smallmatrix}\right)$ with $a^2=bc$.
Matrices with traces $-2$ can be treated in a similar manner by multiplying by $-I$. 
 Write the matrices as 
\[
\left(\begin{smallmatrix}
1-a & b\\
-c & 1+a\\
\end{smallmatrix}\right)=\left(\begin{smallmatrix}
\delta & -\beta\\
-\gamma & \alpha\\
\end{smallmatrix}\right)\left(\begin{smallmatrix}
1 & m\\
0 & 1\\
\end{smallmatrix}\right)\left(\begin{smallmatrix}
\alpha & \beta\\
\gamma & \delta\\
\end{smallmatrix}\right)=\left(\begin{smallmatrix}
1-m\gamma \delta & m \delta^2\\
-m \gamma^2 & 1+m\gamma \delta\\
\end{smallmatrix}\right) \in \Ga_0(p^2). 
\]
To ensure the same we need  $p^2 \mid m \gamma^2$. Hence, we have three possibilities (i)  $p \mid \gamma$ but 
$p \nmid m$ (ii)
 $p \mid m$ but $p^2 \nmid m$, $p \mid \gamma$, and (iii) $p^2 \mid m$. 
In other words, any matrix $\ga \in \GaP$ with trace $2$ is of the form $\sigma^{-1}\smmat{1}{m}{0}{1} \sigma$ with $m\in\mb{Z}$ and for some unique matrix $\sigma$. 
In the first two cases,  $\sigma \in \GaInf \backslash \Ga_0(p)$  where as in the third case 
$\sigma \in \GaInf \backslash \SL_2(\mb{Z})$. 

For weights $k \in \{0,2\}$ and $d \in \{1,p\}$, define the following functions that appear as sub-sums 
in   $p_1(t,y,k)$ corresponding to the cases (i) and (ii):
\begin{align*}
q_d(t,y,k)& = \frac{1}{2}\int_{-1/2}^{1/2} \ \sum_{m \neq 0, (m,p)=1} \ 
\sum_{\substack{\sigma \in \GaInf \backslash \Gamma_0(p),
\\ \sigma \neq \GaInf }} \nu_k\left(t, \sigma^{-1}
\left(\begin{smallmatrix}
\pm 1 & md\\
0 & \pm1\\
\end{smallmatrix} \right)\sigma; z\right)dx \\
& =\int_{-1/2}^{1/2} \ \sum_{m \neq 0, (m,p)=1} \ \sum_{\substack{\sigma \in \GaInf \backslash \Ga_0(p) \slash \GaInf,\\  
\sigma \neq \GaInf}} \,
\sum_{n=-\infty}^{\infty}\nu_k\left(t, \sigma^{-1}
\left(\begin{smallmatrix}
1 & md\\
0 & 1\\
\end{smallmatrix} \right)\sigma; z+n \right)dx.
\end{align*}

Next, we consider the other sub-sum
that appears in $p_1(t,y,k)$ corresponding to case (iii):
\begin{align*}
q_{p^2}(t,y,k)& = \frac{1}{2}\int_{-1/2}^{1/2} \sum_{m \neq 0} 
\sum_{\substack{\sigma \in \GaInf \backslash  \SL_2(\Z), \\ \sigma \neq \GaInf}} \nu_k \left(t, \sigma^{-1}
\left(\begin{smallmatrix}
\pm 1 & mp^2\\
0 & \pm 1\\
\end{smallmatrix} \right)\sigma; z \right)dx \\ 
& =\int_{-1/2}^{1/2} \sum_{m \neq 0} \, 
\sum_{\substack{\sigma \in \GaInf \backslash \SL_2(\Z) \slash \GaInf, \\ 
\sigma \neq \GaInf}}\, \sum_{n=-\infty}^{\infty}\nu_k \left( \sigma^{-1}
\left(\begin{smallmatrix}
1 & mp^2\\
0 & 1\\
\end{smallmatrix} \right)\sigma; z+n \right)dx.
\end{align*}
From the above observation, we have a following decomposition of the function $p_1$:
\[
p_1(t,y,k) = q_1(t,y,k)+ q_p(t,y,k)+q_{p^2}(t,y,k).
\]
Recall that $\GaInf =\Ga_0(N)_{\infty}=  \left\{\pm\smmat{1}{m}{0}{1} \mid m \in \zz\right\}$ is the parabolic subgroup of $\Ga_0(N)$.
\begin{lemma} \label{parasecond}
For any matrix $\tau = \smmat{a}{b}{c}{d} \in \SL_2(\rr) - \GaInf$ with $\tr (\tau)=\pm 2$, we have
\[
\int_{\tH} \nu_k(t,\tau;z) \Im(z)^s \muhyp(z) =
\frac{1}{|c|^s}\int_{\tH} \nu_k(t, L_{\pm};z) \Im(z)^s \muhyp(z);
\]
where $L_{\pm} = \smmat{\pm 1}{0}{1}{\pm 1}$.
\end{lemma}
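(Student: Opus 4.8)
The plan is to run the same change-of-variables device as in Lemma~\ref{changevariable}, now for a parabolic element: produce an explicit $T \in \SL_2(\rr)$ that conjugates $\tau$ to the model matrix $L_\pm$ while simultaneously rescaling $\Im(z)^s$ by the factor $|c|^{-s}$. First I would record the geometry of $\tau$. Since $\tau \neq \pm I$ (because $\pm I \in \GaInf$), $\tau$ is genuinely parabolic, fixing a single point of $\rr \cup \{\infty\}$; its lower-left entry is $c \neq 0$ (automatic for the matrices $\tau = \sigma^{-1}\smmat{\pm 1}{m}{0}{\pm 1}\sigma$ with $\sigma \neq \GaInf$ to which the lemma is applied, and in any case presupposed by the factor $|c|^{-s}$), so the fixed point is finite. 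Writing $\varepsilon = \tfrac12\tr(\tau) \in \{\pm 1\}$ and $\tau = \smmat{a}{b}{c}{2\varepsilon-a}$, the conditions $\det\tau = 1$ and $\tr\tau = 2\varepsilon$ are equivalent to the single relation $bc = -(a-\varepsilon)^2$, and the fixed point is $z_0 = (a-\varepsilon)/c \in \rr$.

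Next I would set
\[
T = \begin{pmatrix} |c|^{-1/2} & z_0\,|c|^{1/2} \\ 0 & |c|^{1/2}\end{pmatrix} \in \SL_2(\rr).
\]
Being upper triangular with lower-right entry $|c|^{1/2}$, it satisfies $\Im(Tw) = \Im(w)/|c|$ for every $w \in \tH$, hence $\Im(Tw)^s = |c|^{-s}\Im(w)^s$. A direct multiplication, in which the off-diagonal entry collapses to $0$ precisely because $bc = -(a-\varepsilon)^2$, gives $T^{-1}\tau T = \smmat{\varepsilon}{0}{\mathrm{sgn}(c)}{\varepsilon}$; when $c > 0$ this is exactly $L_\pm$. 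Substituting $z = Tw$ and invoking the equivariance $\nu_k(t,\tau;Tw) = \nu_k(t,T^{-1}\tau T;w)$ recorded in the proof of Lemma~\ref{changevariable}, together with the $\SL_2(\rr)$-invariance of $\muhyp$, yields
\[
\int_{\tH}\nu_k(t,\tau;z)\Im(z)^s\muhyp(z) = |c|^{-s}\int_{\tH}\nu_k\big(t,T^{-1}\tau T;w\big)\Im(w)^s\muhyp(w),
\]
which is the assertion for $c > 0$.

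The genuinely delicate case is $c < 0$: then $T^{-1}\tau T = \smmat{\varepsilon}{0}{-1}{\varepsilon}$ is the inverse, as a Möbius transformation, of $L_\pm$ rather than $L_\pm$ itself, and no upper-triangular $T$ can reach $L_\pm$, since parabolics of a fixed trace split into two $\SL_2(\rr)$-conjugacy classes distinguished by $\mathrm{sgn}(c)$. I would dispose of this with the anti-holomorphic isometry $\rho(z) = -\bar z$ of $\tH$, which preserves both $\Im$ and $\muhyp$ and conjugates the Möbius action of $L_\pm^{-1}$ to that of $L_\pm$. Tracking $u$, $H$, and $j_\ga$ through $\rho$ (and using that $\nu_k$ is insensitive to $\ga \mapsto -\ga$ for $k \in \{0,2\}$) shows that $\int_{\tH}\nu_k(t,L_\pm^{-1};z)\Im(z)^s\muhyp(z)$ equals the complex conjugate of $\int_{\tH}\nu_k(t,L_\pm;z)\Im(z)^s\muhyp(z)$. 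Since these model integrals are real-valued—for $k=0$ immediately, and for $k=2$ because the phase averages out, exactly as in the hyperbolic and elliptic computations of \cite{MR1437298}—the two agree, and the stated formula holds for both signs of $c$.

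I expect the main obstacle to be precisely this sign bookkeeping, namely the splitting of trace-$(\pm 2)$ parabolics into two conjugacy classes and the reflection argument needed to identify the $L_\pm^{-1}$ integral with the $L_\pm$ integral. The construction of $T$, the verification $T^{-1}\tau T = \smmat{\varepsilon}{0}{\mathrm{sgn}(c)}{\varepsilon}$ via the relation $bc = -(a-\varepsilon)^2$, and the resulting change of variables are otherwise entirely parallel to Lemma~\ref{changevariable}.
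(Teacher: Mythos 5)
Your construction of $T$ and the main change of variables reproduce the paper's own (one-line) proof: for $c>0$ your matrix coincides with the $T=\tfrac{1}{\sqrt c}\smmat{1}{(a-d)/2}{0}{c}$ that the paper substitutes, and the verification $T^{-1}\tau T=\smmat{\varepsilon}{0}{\mathrm{sgn}(c)}{\varepsilon}$ together with $\Im(Tw)=\Im(w)/|c|$ and the equivariance of $\nu_k$ is exactly what is needed. You are also right to flag $c<0$ as a genuine issue that the paper silently ignores (its $\sqrt c$ is not even real there): trace-$\pm 2$ parabolics do split into two $\SL_2(\rr)$-conjugacy classes according to $\mathrm{sgn}(c)$, so no $T\in\SL_2(\rr)$ reaches $L_{\pm}$ in that case.

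The gap is in your repair of the $c<0$ case. The reflection $z\mapsto-\bar z$ does show that $\int_{\tH}\nu_k(t,L_{\pm}^{-1};z)\Im(z)^s\muhyp$ is the complex conjugate of $\int_{\tH}\nu_k(t,L_{\pm};z)\Im(z)^s\muhyp$ for real $s$, but the latter integral is \emph{not} real when $k=2$: one computes $j_{L_+}(z,2)H(z,L_+z)=\frac{2iy-|z|^2}{2iy+|z|^2}$ and $u(z,L_+z)=|z|^4/(4y^2)$, so the imaginary part of the integrand is $\frac{4y|z|^2}{|z|^4+4y^2}\,\phi_2\big(t,|z|^4/(4y^2)\big)y^{s-2}$; passing to polar coordinates and substituting $r=2\rho\sin\theta$ factors its integral as a positive constant (depending on $s$) times $\int_0^\infty\frac{\rho^s}{\rho^2+1}\phi_2(t,\rho^2)\,d\rho$, which cannot vanish identically in $s$ unless $\phi_2(t,\cdot)\equiv 0$. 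Since $\nu_k(t,L_-;z)=\nu_k(t,-L_-;z)=\nu_k(t,L_+^{-1};z)$, this means $\int\nu_2(t,L_+;z)\Im(z)^s\muhyp\neq\int\nu_2(t,L_-;z)\Im(z)^s\muhyp$ in general, so the two conjugacy classes genuinely give different model integrals and the lemma as literally stated fails for $k=2$ when $\mathrm{sgn}(c)$ disagrees with the expected sign; the honest output of the change of variables is $|c|^{-s}\int\nu_k\big(t,\smmat{\pm1}{0}{\mathrm{sgn}(c)}{\pm1};z\big)\Im(z)^s\muhyp$. What actually rescues the downstream use is not realness of the individual model integrals but the pairing $m\leftrightarrow-m$ in Proposition~\ref{M_1}: $\sigma^{-1}\smmat{1}{m}{0}{1}\sigma$ and $\sigma^{-1}\smmat{1}{-m}{0}{1}\sigma$ have the same $|c|=|m|\gamma^2$ and opposite signs of $c$, so each pair contributes $|c|^{-s}I_k(t,s,2)$ with $I_k(t,s,2)=\int\nu_k(t,L_+;z)\Im(z)^s\muhyp+\int\nu_k(t,L_-;z)\Im(z)^s\muhyp$, which is automatically real. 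You should either prove the lemma in that sign-corrected form, or restrict it to $c>0$ and carry out the pairing explicitly in the application.
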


\begin{proof}
Substitute $z=Tw$ where
$T=\dfrac{1}{\sqrt{c}}\left(\begin{smallmatrix}
1 & \frac{a-d}{2}\\
0& c\\
\end{smallmatrix}\right)$.
\end{proof}
\noindent For any positive integer $M$, define
\[
\L_M(s)=\sum_{\substack{\sigma \in \GaInf \backslash \Gamma_0(M) \slash  \GaInf, 
\\ \sigma=\left(\begin{smallmatrix}
* & *\\
c & *\\
\end{smallmatrix} \right),\ c \neq 0}} \frac{1}{|c|^{2s}}
\]
and 
\[
\zeta_M(s)=\sum_{m \geq 1, (m,M)=1} \frac{1}{m^s}. 
\]
For $k \in \{0,2\}$, consider the following integrals:
\[
I_k(t,s,2)=\int_{\tH} \nu_k(t,L_+;z) (\Im z)^s \muhyp(z)+\int_{\tH} \nu_k(t,L_-;z)(\Im z)^s \muhyp(z).
\]
\begin{proposition}\label{M_1}
For the modular curves of the form $X_0(p^2)$, the Mellin transform $\M_1(t,s)$ can be written as 
 product of following simple functions
\[
\M_1(t,s)=\frac{p}{p-1}  \left(1-\frac{1}{p^{2s}}\right) \zeta(s)\mathcal{L}_p(s)\big(I_2(t,s,2)-I_0(t,s,2)\big). 
\]

\end{proposition}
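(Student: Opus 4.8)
The plan is to compute $\M_1(t,s)$ head-on from the decomposition $p_1 = q_1 + q_p + q_{p^2}$ established just above, handling the three pieces in parallel and assembling them at the end. Since $\M_1(t,s)$ is linear in $p_1$, I would write $\M_1 = \M_1^{(1)} + \M_1^{(p)} + \M_1^{(p^2)}$, where $\M_1^{(d)}$ is the Mellin transform of $q_d(t,y,2) - q_d(t,y,0)$. Throughout I would work in the half-plane $\Re(s) > 1$, where the defining series converge absolutely and \eqref{p-s-c} is valid, which justifies all the interchanges of summation and integration below.

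For each piece the first move is an unfolding. In the double-coset form of $q_d$, the inner sum over the $\GaInf$-translates $z \mapsto z+n$, together with the $x$-integration over $[-1/2,1/2]$ and the Mellin integral $\int_0^\infty (\cdot)\,y^{s-2}\,dy$, recombine into a single integral over all of $\hh$ against $(\Im z)^s\,\muhyp(z)$. This turns $\M_1^{(d)}(t,s)$ into
\[
\sum_{m}\ \sum_{\sigma}\ \int_{\hh}\big(\nu_2 - \nu_0\big)\!\left(t, \sigma^{-1}\smmat{1}{md}{0}{1}\sigma; z\right)(\Im z)^s\,\muhyp(z),
\]
with $m$ and $\sigma$ ranging as in the definition of $q_d$. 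Writing $\sigma = \smmat{\alpha}{\beta}{\gamma}{\delta}$, the conjugate $\sigma^{-1}\smmat{1}{md}{0}{1}\sigma$ has trace $2$ and lower-left entry $-md\gamma^2$, so Lemma~\ref{parasecond} reduces each inner integral to the model integral $\int_{\hh}\nu_k(t,L_+;z)(\Im z)^s\,\muhyp(z)$ times the scalar $(|m|\,d\,\gamma^2)^{-s}$. By the reflection symmetry $z\mapsto -\bar z$ the $L_+$ and $L_-$ model integrals coincide, so each class contributes exactly $\tfrac12 I_k(t,s,2)$; combined with the doubling from $\pm m$ in the remaining sums, the two factors of $2$ cancel. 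The scalars then separate into an $m$-sum and a $\sigma$-sum: the $m$-sum gives $\zeta_p(s)$ for $d\in\{1,p\}$ (where $(m,p)=1$) or $\zeta(s)$ for $d=p^2$ (where $m$ is unrestricted), with the power $d^{-s}$, while the $\sigma$-sum is precisely $\sum_\sigma |\gamma|^{-2s} = \L_p(s)$ for $q_1,q_p$ (as $\sigma \in \GaInf\backslash\Ga_0(p)/\GaInf$) and $\L_1(s)$ for $q_{p^2}$ (as $\sigma\in\GaInf\backslash\SL_2(\zz)/\GaInf$). This yields
\[
\M_1 = \Big[(1 + p^{-s})\,\zeta_p(s)\,\L_p(s) + p^{-2s}\,\zeta(s)\,\L_1(s)\Big]\big(I_2(t,s,2) - I_0(t,s,2)\big).
\]

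The remaining and decisive step is to collapse this into a single $\L_p$-expression. First, $\zeta_p(s) = (1-p^{-s})\zeta(s)$ turns $(1+p^{-s})\zeta_p(s)$ into $(1-p^{-2s})\zeta(s)$, so the $q_1$- and $q_p$-contributions merge into $(1-p^{-2s})\zeta(s)\L_p(s)$. The genuine obstacle is to relate the $\L_1$ produced by $q_{p^2}$ to $\L_p$. I would parametrize the cosets $\GaInf\backslash\Ga_0(M)/\GaInf$ by their lower-left entry $c$, noting that for each admissible $c>0$ there are exactly $\phi(c)$ of them, whence $\L_1(s) = \sum_{c\geq 1}\phi(c)c^{-2s}$ and $\L_p(s) = \sum_{p\mid c}\phi(c)c^{-2s}$. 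Comparing the Euler products (the factors at primes $q\neq p$ are identical, and the $p$-factor of $\L_p$ omits only the constant term $1$) gives the identity
\[
\L_1(s) = \frac{p^{2s}-1}{p-1}\,\L_p(s),
\]
hence $p^{-2s}\zeta(s)\L_1(s) = \tfrac{1}{p-1}(1-p^{-2s})\zeta(s)\L_p(s)$. Adding this to the merged first two terms produces the factor $1 + \tfrac{1}{p-1} = \tfrac{p}{p-1}$, giving exactly
\[
\M_1(t,s) = \frac{p}{p-1}\left(1 - \frac{1}{p^{2s}}\right)\zeta(s)\,\L_p(s)\big(I_2(t,s,2) - I_0(t,s,2)\big),
\]
as claimed. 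Thus the main difficulty is not the unfolding or Lemma~\ref{parasecond}, which are routine, but recognizing that the three a priori different double-coset sums conspire—through the Dirichlet-series identity $\L_1 = \tfrac{p^{2s}-1}{p-1}\L_p$—to a single multiple of $\L_p(s)$.
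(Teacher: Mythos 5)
Your proposal is correct and follows the paper's proof essentially step for step: the same decomposition $p_1=q_1+q_p+q_{p^2}$, the same unfolding and application of Lemma~\ref{parasecond} to reach the intermediate formula $\M_1=\big[(1-p^{-2s})\L_p(s)+p^{-2s}\L_1(s)\big]\zeta(s)\big(I_2(t,s,2)-I_0(t,s,2)\big)$, and the same concluding identity $\L_1(s)=\frac{p^{2s}-1}{p-1}\L_p(s)$. The only divergence is how that identity is obtained: the paper gets it by comparing two cited formulas for $\phi_{\infty,\infty}^{\Gamma_0(p)}(s)$ (via \cite[Lemma 3.2.19]{MR1437298} and $\L_1(s)=\zeta(2s-1)/\zeta(2s)$ from \cite{MR1942691}), whereas you count the double cosets by their lower-left entry and compare Euler products of $\sum\phi(c)c^{-2s}$ directly --- a more self-contained derivation of the same fact, with no effect on the rest of the argument.
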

\begin{proof}
For any matrix $\sigma \in \GaInf \backslash \GaP$ with $\sigma = \left(\begin{smallmatrix}
a & b\\
c & d\\
\end{smallmatrix}\right)$, denote $c(\sigma)=c$. Note that this choice is independent of the 
right coset representative. 

By Lemma \ref{parasecond}, we obtain
\begin{align*}
  \int_0^{\infty} q_1(t,y,k) y^{s-2} dy &= \sum_{\substack{m \neq 0,\\ (m,p)=1}} \ 
    \sum_{\substack{\sigma \in \GaInf \backslash \Gamma_0(p) / \GaInf, \\ \sigma \neq \GaInf}} 
    \int_{\hh} \nu_k(t,\sigma^{-1} \smmat{1}{m}{0}{1}\sigma; z) (\Im z)^{s} \muhyp \\
    &= \sum_{\substack{m \neq 0,\\ (m,p)=1}} \ 
    \sum_{\substack{\sigma \in \GaInf \backslash \Gamma_0(p) / \GaInf, \\ \sigma \neq \GaInf}} 
  \frac{1}{|c(\sigma^{-1} \smmat{1}{m}{0}{1}\sigma)|^s}\ I_k(t,s,2)\\
   & = \zeta_p(s)  I_k(t,s,2) \L_p(s),\\
  \int_0^{\infty} q_p(t,y,k) y^{s-2} dy
    &= \sum_{\substack{m \neq 0,\\ (m,p)=1}}\ 
    \sum_{\substack{\sigma \in \GaInf \backslash \Gamma_0(p) / \GaInf, \\ \sigma \neq \GaInf}} \ 
    \int_{\hh}  \nu_k(t,\sigma^{-1}\smmat{1}{mp}{0}{1} \sigma; z) (\Im z)^{s} \muhyp\\
  &= \frac{\zeta_p(s)}{p^s} I_k(t,s,2) \L_p(s),\\
  \int_0^{\infty} q_{p^2}(t,y,k) y^{s-2} dy &= \sum_{m \neq 0} \ 
    \sum_{\substack{\sigma \in \GaInf \backslash \SL_2(\Z) / \GaInf,\\ \sigma \neq \GaInf}}\ 
    \int_{\hh} \nu_k(t,\sigma^{-1} \smmat{1}{mp^2}{0}{1} \sigma; z) (\Im z)^{s} \muhyp \\
  &=\frac{\zeta(s)}{p^{2s}} I_k(t,s,2) \L_1(s).
\end{align*}
Recall the identity $\zeta_p(s)=\zeta(s)(1-p^{-s})$. 
By summing up, we deduce that
\begin{equation}\label{m_1-form}
\M_1(t,s)=\left(\left(1-\frac{1}{p^{2s}}\right) 
        \L_p(s)+\frac{1}{p^{2s}} \L_1(s)\right)  \zeta(s)\big(I_2(t,s,2)-I_0(t,s,2)\big).
\end{equation}
By \cite[p. 49]{MR1942691} and  \cite[Theorem 2.7, p. 46]{MR1942691}, we now get
\[
\L_1(s)=\frac{\zeta(2s-1)}{\zeta(2s)}.
\]

From \cite[Lemma 3.2.19]{MR1437298}, we deduce that
\begin{equation}\label{impac0}
\phi_{\infty, \infty}^{\Gamma_0(p)}(s)=\sqrt{\pi} \frac{\Gamma(s-\frac{1}{2})}{\Gamma(s)}
\frac{p-1}{p^{2s}-1}\frac{\zeta(2s-1)}{\zeta(2s)}.
\end{equation}
Hence, we obtain  \cite[p.56]{MR1437298}
\begin{equation}\label{impac}
\phi_{\infty, \infty}^{\Gamma_0(p)}(s)=\sqrt{\pi} \frac{\Gamma(s-\frac{1}{2})}{\Gamma(s)}\L_p(s).
\end{equation}
Comparing \eqref{impac0} and \eqref{impac}, we have $\L_1(s)=\dfrac{p^{2s}-1}{p-1}\L_p(s)$. 
The result follows by substituting this in \eqref{m_1-form}.
\end{proof}
   
\begin{proposition} \label{prop:M1}
The Laurent series expansion of $\M_1(t,s)$ at $s=1$ is given by
\begin{align*}
 \M_1(t,s) = & \left( \frac{1}{v_{\Ga_0(p)}}\frac{p+1}{p}A_1(t) \right)\frac{1}{s-1} \\ 
             & + \frac{1}{v_{\Ga_0(p)}} \frac{p+1}{p} 
             \bigg(\left(3\gem + \frac{a\pi}{6} - \log (p^2)\right) A_1(t) + B_1(t) \bigg) \\ 
             & + O(s-1);
\end{align*}
where the functions $A_1(t)$ and $B_1(t)$ are independent of $p$. Furthermore, we have $\lim_{t \to \infty} A_1(t) = 1/2$
and $B_1(t)$ has a finite limit as $t \to \infty$, which we call $B_1(\infty)$.
\end{proposition}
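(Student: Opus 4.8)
The proof starts from the factorisation of $\M_1(t,s)$ obtained in Proposition~\ref{M_1},
\[
\M_1(t,s)=\frac{p}{p-1}\left(1-\frac{1}{p^{2s}}\right)\zeta(s)\,\L_p(s)\,\big(I_2(t,s,2)-I_0(t,s,2)\big),
\]
and the plan is to insert the Laurent expansion of each factor at $s=1$ and multiply out. Among the five factors, $\frac{p}{p-1}$ and $1-p^{-2s}$ are holomorphic and nonzero at $s=1$, while $\zeta(s)$ and $\L_p(s)$ each contribute a simple pole. Writing $\L_p(s)=\frac{p-1}{p^{2s}-1}\frac{\zeta(2s-1)}{\zeta(2s)}$ as in the proof of Proposition~\ref{M_1}, the pole of $\L_p$ comes from $\zeta(2s-1)$, so that $\zeta(s)\L_p(s)$ has a \emph{double} pole at $s=1$. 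The parabolic integral $I_2(t,s,2)-I_0(t,s,2)$ vanishes to first order at $s=1$, and this simple zero reduces the double pole to the simple pole claimed in the statement. I would define $A_1(t)$ and $B_1(t)$ through the expansion of the $t$-dependent part of $I_2-I_0$, noting that these coefficients depend only on $t$ because the integrands $\nu_k(t,L_\pm;z)(\Im z)^s$ do not involve $p$.

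First I would record the elementary expansions: the Laurent expansion $\zeta(s)=\frac{1}{s-1}+\gem+O(s-1)$; the expansion $\zeta(2s-1)=\frac{1}{2(s-1)}+\gem+O(s-1)$ from \eqref{LS-z}; and the Taylor expansions of $1-p^{-2s}$ and of $\frac{p-1}{p^{2s}-1}$ about $s=1$, each of which carries a single factor of $\log p$ in its linear coefficient. The key organising observation is that the parabolic integrals $I_k(t,s,2)$ carry a gamma-function normalisation $\sqrt{\pi}\,\Gamma(s-\tfrac12)/\Gamma(s)$, which combines with the factor $1/\zeta(2s)$ sitting inside $\L_p$ to reproduce exactly the function $\sqrt{\pi}\,\Gamma(s-\tfrac12)/(\Gamma(s)\zeta(2s))$ whose Taylor coefficient $a$ at $s=1$ is recorded in \eqref{TS-Ga}. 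This explains the reappearance of the constant $a$ from Lemma~\ref{phiinfty}.

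Collecting the powers of $(s-1)$ is then mechanical. The residue is the product of the leading coefficients; after the simplifications $\frac{p}{p-1}(1-p^{-2})=\frac{p+1}{p}$ and $\Res_{s=1}\L_p=\frac{1}{\pi v_{\Ga_0(p)}}$ it collapses to $\frac{1}{v_{\Ga_0(p)}}\frac{p+1}{p}A_1(t)$. For the constant term one pairs the subleading coefficient of each pole factor with the leading coefficients of the others: the $\gem$ from $\zeta(s)$ and the $2\gem$ carried by $\zeta(2s-1)$ inside $\L_p$ assemble the $3\gem$; the linear coefficients of $1-p^{-2s}$ and $\frac{p-1}{p^{2s}-1}$ combine to $-\log(p^2)$; and the gamma-over-$\zeta(2s)$ factor contributes $\frac{a\pi}{6}$. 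Adding the quadratic coefficient $B_1(t)$ of $I_2-I_0$ paired against the double pole produces the stated constant term $\frac{1}{v_{\Ga_0(p)}}\frac{p+1}{p}\big((3\gem+\frac{a\pi}{6}-\log(p^2))A_1(t)+B_1(t)\big)$.

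The genuine analytic content, and the step I expect to be the main obstacle, is the determination of the limits of $A_1(t)$ and $B_1(t)$ as $t\to\infty$. These are the low-order Taylor coefficients at $s=1$ of the weight difference of the parabolic integrals $I_k(t,s,2)=\int_{\tH}\nu_k(t,L_+;z)(\Im z)^s\muhyp+\int_{\tH}\nu_k(t,L_-;z)(\Im z)^s\muhyp$, so their behaviour is governed by the large-$t$ asymptotics of the inverse Selberg/Harish-Chandra transforms $\phi_k(t,\cdot)$ evaluated on the parabolic elements $L_\pm$. I would extract these limits from the corresponding parabolic computation in Abbes--Ullmo~\cite{MR1437298}, obtaining $\lim_{t\to\infty}A_1(t)=\tfrac12$ together with the existence of $B_1(\infty):=\lim_{t\to\infty}B_1(t)$; the $p$-independence established above guarantees that both limits are universal constants.
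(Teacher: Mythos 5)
Your proposal is correct and follows essentially the same route as the paper: both start from the factorisation in Proposition~\ref{M_1}, expand each factor in a Laurent series at $s=1$, use the first-order vanishing of the normalised parabolic integral (citing Abbes--Ullmo/Grados for the expansion $A_1(t)(s-1)+B_1(t)(s-1)^2+\cdots$ and for $\lim_{t\to\infty}A_1(t)=1/2$ and the existence of $B_1(\infty)$), and multiply out. The only cosmetic difference is that the paper packages $\sqrt{\pi}\,\Gamma(s-\tfrac12)\L_p(s)/\Gamma(s)$ as $\phi_{\infty,\infty}^{\Gamma_0(p)}(s)$ and quotes its known Laurent expansion, whereas you expand $\L_p(s)=\frac{p-1}{p^{2s}-1}\frac{\zeta(2s-1)}{\zeta(2s)}$ into its elementary constituents; the resulting arithmetic (the $3\gem$, the $-\log(p^2)$, the $\frac{a\pi}{6}$, and the residue $\frac{1}{v_{\Gamma_0(p)}}\frac{p+1}{p}A_1(t)$) is identical.
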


\begin{proof}
From Proposition \ref{M_1} and \eqref{impac}, we obtain
\begin{align}
\label{M1}
\M_1(t,s)  & = \frac{p}{p-1} \left(1-\frac{1}{p^{2s}}\right) \zeta(s)\phi_{\infty, \infty}^{\Ga_0(p)}(s) 
        \left(\frac{\Ga(s)}{\sqrt{\pi}\Ga(s-\frac{1}{2})} \big(I_2(t,s,2)-I_0(t,s,2)\big)\right). 
\end{align}
By \cite[p.59]{MR1437298} we have
\[
  \phi_{\infty, \infty}^{\Ga_0(p)}(s)=
  \frac{1}{v_{\Ga_0(p)}}\frac{1}{s-1} + 
  \frac{1}{v_{\Ga_0(p)}}\left(2 \gem + a \frac{\pi}{6}-\frac{p^2}{p^2-1} \log(p^2)\right) + 
  O(s-1),
\]
and
\[
  \zeta(s)=\frac{1}{s-1} + \gem + O(s-1).
\]
Recall that we have a well-known Laurent series expansion
\begin{equation*}
  \left(1-\frac{1}{p^{2s}}\right) = \left(1-\frac{1}{p^2}\right) + \frac{\log(p^2)}{p^2}(s-1)+ O((s-1)^2).
\end{equation*} 
By a verbatim generalization of \cite[Lemma~B.2.1]{Grados:Thesis} and \cite[Proposition~3.3.4]{MR1437298} we have:
\[
  \frac{\Ga(s) } {\sqrt{\pi}\Ga(s-\frac{1}{2})} [I_2(t,s,2)-I_0(t,s,2)]
  = A_1(t) (s-1)+B_1(t) (s-1)^2+O((s-1)^3);
\]
where $\lim_{t \to \infty}A_1(t)=1/2$. It also follows from \cite[Lemma~3.3.10]{MR1437298} that $B_1(t)$ has a finite
limit as $t \to \infty$.
Putting these in Equation~\eqref{M1}, we get the required result. 
\end{proof}

From \cite[Lemma 4.4.7-9]{Grados:Thesis}, we obtain
\begin{equation}
\begin{aligned} \label{eq:M2}
  \M_2(t,s) & = \frac{1}{s-1}\left( A_2(t)+ \frac{1}{4\pi} \right) 
                + \left( \dfrac{1-\log(4\pi)}{4\pi}+\gem A_2(t)+B_2(t) \right) 
                + O(s-1),\\
  \M_3(t,s) & = \frac{1}{v_{\GaP}}\frac{1}{s-1}  
                + \left( \frac{\C_{\infty, \infty}^{\GaP}}{2}+\frac{t+1}{2 v_{\GaP}} \right)
                + O(s-1),\\
  \M_4(t,s) & = A_4(t)+O(s-1),
\end{aligned}
\end{equation}
where $A_2$, $B_2$, $A_4$ depend only on $t$ and tends to zero as $t \rightarrow \infty$, and 
$\C_{\infty, \infty}^{\GaP}$ is the constant term of $\phi_{\infty, \infty}^{\GaP}$ (see
Lemma~\ref{phiinfty}).

\begin{proof}[Proof of Proposition \ref{contb} (iv)] 
Combining \eqref{p-s-c}, Proposition~\ref{prop:M1} and \eqref{eq:M2}, we get
\begin{equation*}
 R_{P-C}(t,s) = \frac{\Rpar_{-1}(t)}{s-1} + \Rpar_0(t) + O(s-1);
\end{equation*}
where 
\begin{equation*}
\Rpar_{-1} = \frac{1}{4\pi} + \frac{1}{v_{\GaP}} + \frac{1}{v_{\Ga_0(p)}}\frac{p+1}{p}A_1(t) + A_2(t)
\end{equation*}
and
\begin{align*}
\Rpar_0 = & \frac{1}{v_{\Ga_0(p)}} \frac{p+1}{p} 
                       \bigg(\left(3\gem + \frac{a\pi}{6} - \log (p^2)\right) A_1(t) + B_1(t) \bigg) \\
                     & + \dfrac{1-\log(4\pi)}{4\pi}+\gem A_2(t)+B_2(t)
                       + \frac{\C_{\infty,\infty}^{\Ga_0(p^2)}}{2}+\frac{t+1}{2 v_{\GaP}}
                       + A_4(t).
\end{align*}
Moreover writing $\Rpar_0(t) = \dfrac{t+1}{2 v_{\GaP}} + \Epar(t)$ we have
\begin{align*}
    \lim_{t \to \infty} \Epar(t) = & \frac{1}{2 v_{\Ga_0(p)}} \frac{p+1}{p} 
                                     \left(3\gem + \frac{a\pi}{6} - \log (p^2) + 2 B_1(\infty) \right)
                                     + \frac{1-\log(4\pi)}{4\pi} + \frac{\C_{\infty,\infty}^{\GaP}}{2}\\
                                 = & \frac{1-\log(4\pi)}{4\pi}+ \frac{\C_{\infty,\infty}^{\GaP}}{2} + O\left(\frac{\log (p^2)}{p^2} \right).   
\end{align*}
We only need to show that $\C_{\infty, \infty}^{\GaP} = O\left(\frac{\log(p^2)}{p^2}\right)$ 
which follows from our computation of $\phi_{\infty, \infty}^{\GaP}$ in Lemma~\ref{phiinfty}. This concludes the proof of Proposition~\ref{contb}.
\end{proof}

\subsection{Asymptotics of the canonical Green's function}

\begin{proof}[Proof of Theorem ~\ref{RSconst}]
From Equation~\eqref{key-form}, we have
\[
g_{p^2}R_F(s)=-R_D(t,s) +R_{H}(t,s) + R_{E}(t,s) + R_{P-C}(t,s).
\]
Hence, we get the following equality:
\[
g_{p^2}\RG = -\Rdis_0(t) + \Rhyp_0(t) + \Rell_0(t) + \Rpar_0(t). 
\]
According to the Proposition~\ref{contb} as $t \to \infty$, we have $\Rdis_0(t)\to 0, 
\Rell_0=o(\log(p))$. We also have $ \Rhyp_0(t) = \Ehyp(t) - \frac{t-1}{2 v_{\GaP}}$
and $\Rpar_0(t) = \frac{t+1}{2 v_{\GaP}} + \Epar(t)$ and hence 
\begin{align*}
  \Rhyp_0(t) +\Rpar_0(t)=\Ehyp(t)+ \Epar(t)+\frac{1}{ v_{\GaP}}. 
\end{align*}
From Proposition~\ref{contb}, recall that
 $\lim_{t \to \infty} \Ehyp(t) = \frac{1}{2 v_{\GaP}} O_{\epsilon}(p^{2 \epsilon})$
and   $\lim_{t \to \infty} \Epar(t) = 
\frac{1-\log(4\pi)}{4\pi} + O\left(\frac{\log (p^2)}{p^2}\right)$.
As $t \to \infty$, we have
\begin{eqnarray*}
\RG
&= &
\frac{1}{g_{p^2}} \lim_{t \rightarrow \infty} \left(-\Rdis_0(t) + \Rhyp_0(t) + \Rell_0(t) + \Rpar_0(t)\right)\\
&= &
\frac{1}{g_{p^2}} \lim_{t \rightarrow \infty} \left(\Rhyp_0(t) +\Rpar_0(t)+\Rell_0\right) \\
&= &
\frac{1}{g_{p^2}}\lim_{t \rightarrow \infty} \left(\Ehyp(t)+ \Epar(t)+\frac{1}{ v_{\GaP}} \right)+o\left( \frac{\log(p^2)}{g_{p^2}} \right) \\
&= &
\frac{1}{g_{p^2}} \left( \frac{1-\log(4\pi)}{4\pi} + O\left(\frac{\log (p^2)}{p^2}\right)+ \frac{1}{2 v_{\GaP}} O_{\epsilon}(p^{2 \epsilon})+\frac{1}{ v_{\GaP}} \right)+o\left( \frac{\log(p^2)}{g_{p^2}} \right)\\
&= & o\left( \frac{\log(p^2)}{g_{p^2}} \right).\\
\end{eqnarray*}
The last equality follows from $v_{\GaP} = \frac{\pi}{3}p(p+1)$. 
\end{proof}

\begin{proposition} \label{lem:analysis-main}
 For $p > 7$, we have the following asymptotic expression
  \begin{equation*}
    \Gcan(\infty,0) = \frac{6\log (p^2)}{p(p+1)} + o\left(\frac{\log(p^2)}{g_{p^2}}\right).
  \end{equation*}
\end{proposition}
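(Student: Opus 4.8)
The plan is to read off the asymptotics directly from the fundamental identity~\eqref{gcanimp},
\[
\Gcan(\infty,0)= -2\pi\,\mathcal{C}_{\infty,0}^{\Gamma_0(p^2)} +4\pi R_{\infty}^{\Gamma_0(p^2)} + O\!\left(\frac{1}{g_{p^2}}\right),
\]
by substituting the explicit value of the constant term $\mathcal{C}_{\infty,0}^{\Gamma_0(p^2)}$ from Corollary~\ref{Secondterm} and the estimate on $R_{\infty}^{\Gamma_0(p^2)}=\RG$ supplied by Theorem~\ref{RSconst} together with the identification established in Section~\ref{Essterm}. Since all the genuine analytic input has already been produced, the remaining argument is essentially a careful bookkeeping of error terms.

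First I would insert $v_{\Ga_0(p^2)}=\frac{\pi}{3}p(p+1)$ into Corollary~\ref{Secondterm}, so that the prefactor $\frac{2\pi}{v_{\Ga_0(p^2)}}$ becomes $\frac{6}{p(p+1)}$ and
\[
-2\pi\,\mathcal{C}_{\infty,0}^{\Gamma_0(p^2)} = \frac{6}{p(p+1)}\,\frac{p^2-p-1}{p^2-1}\,\log(p^2) - \frac{6}{p(p+1)}\left(2\gem + \frac{a\pi}{6}\right).
\]
The second summand is $O(p^{-2})$, because $2\gem+\frac{a\pi}{6}$ is a fixed constant independent of $p$ (see Lemma~\ref{phiinfty}). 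For the first summand I would write $\frac{p^2-p-1}{p^2-1}=1-\frac{p}{p^2-1}$, which isolates the leading piece $\frac{6\log(p^2)}{p(p+1)}$ and leaves a remainder $\frac{6\log(p^2)}{p(p+1)}\cdot\frac{p}{p^2-1}=O\!\left(\frac{\log(p^2)}{p^3}\right)$.

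Next I would dispose of the two remaining contributions in~\eqref{gcanimp}: the term $4\pi R_{\infty}^{\Gamma_0(p^2)}$ is $o\!\left(\frac{\log(p^2)}{g_{p^2}}\right)$ by Theorem~\ref{RSconst}, and the trailing term is $O\!\left(\frac{1}{g_{p^2}}\right)$. To combine the error terms I would invoke the genus formula of Remark~\ref{genus-form}, which gives $g_{p^2}=\Theta(p^2)$ and hence $\frac{1}{g_{p^2}}=\Theta\!\left(\frac{1}{p^2}\right)$. Comparing everything against $\frac{\log(p^2)}{g_{p^2}}$, the pieces $O(p^{-2})$ and $O(g_{p^2}^{-1})$ are both $o\!\left(\frac{\log(p^2)}{g_{p^2}}\right)$ since $\log(p^2)\to\infty$, and $O\!\left(\frac{\log(p^2)}{p^3}\right)$ is negligible against $\frac{\log(p^2)}{p^2}$. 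Collecting terms then yields $\Gcan(\infty,0)=\frac{6\log(p^2)}{p(p+1)}+o\!\left(\frac{\log(p^2)}{g_{p^2}}\right)$, as claimed.

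There is no serious obstacle here; the entire difficulty was already absorbed into Theorem~\ref{RSconst}. The only subtlety requiring care is that the leading term $\frac{6\log(p^2)}{p(p+1)}$ is itself of exact order $\frac{\log(p^2)}{g_{p^2}}$ and must be retained rather than folded into the remainder, while every other contribution is strictly of smaller order. This is precisely why one keeps the factor $\frac{p^2-p-1}{p^2-1}$ explicit and separates out its leading value $1$, rather than merely estimating it as $1+o(1)$ from the outset.
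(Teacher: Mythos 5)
Your proposal is correct and follows essentially the same route as the paper's own proof: both start from the identity \eqref{gcanimp}, substitute Corollary~\ref{Secondterm} for $-2\pi\,\mathcal{C}_{\infty,0}^{\Gamma_0(p^2)}$, absorb $4\pi R_{\infty}^{\Gamma_0(p^2)}$ via Theorem~\ref{RSconst}, and use $v_{\Gamma_0(p^2)}=\frac{\pi}{3}p(p+1)$ together with $g_{p^2}=\Theta(p^2)$ to fold the remaining terms into $o\bigl(\log(p^2)/g_{p^2}\bigr)$. Your write-up is in fact somewhat more explicit than the paper's (which compresses the bookkeeping into two displayed lines), but the mathematical content is identical.
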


\begin{proof}
Using Theorem~\ref{RSconst}, Remark~\ref{genus-form}  and \eqref{gcanimp}, we obtain
\[
 \Gcan(\infty,0) = -2\pi \CG + o\left(\frac{\log(p^2)}{g_{p^2}}\right).
\]
By Corollary~\ref{Secondterm}, we also have
\begin{align*}
 -2\pi \CG =  \frac{6 \log (p^2)}{p(p+1)} + o\left(\frac{\log(p^2)}{g_{p^2}}\right);
\end{align*}
noting that $v_{\GaP} = \frac{\pi}{3}p(p+1)$. Hence, asymptotically the main contribution for 
$\Gcan(\infty,0)$ comes from $\CG$ and the proposition follows.
\end{proof} 

\begin{remark}
We need the assumption $p > 7$  in the parabolic part as the computations are carried out under the assumption that  $g_{p^2} > 1$.
\end{remark}
In \cite{MR3343899}, the estimates on Arakelov-Green's functions are provided  for general non-compact orbisurfaces. 

\section{Minimal regular models of Edixhoven} \label{sec:MinimalModel}
For primes $p \geq 7$, the modular curve $X_0(p^2)$ is an algebraic curve defined over $\qq$. 

In \cite{MR1056773}, Bas Edixhoven constructed regular models $\tcX_0(p^2)$ for all such primes. Note that the 
above mentioned models are arithmetic surfaces over $\spec \zz$. These models however are not minimal. In 
this section, we recall the regular models of Edixhoven and describe the minimal regular models obtained from 
them.

For any prime $q$ of $\zz$ such that $q \neq p$ the fiber $\tcX_0(p^2)_{\ff_q}$ is a smooth curve of genus 
$g_{p^2}$, the genus of $X_0(p^2)$. For the prime $p$ the fiber $\tcX_0(p^2)_{\ff_p}$ is reducible and non-reduced, 
of arithmetic genus $g_{p^2}$, and whose geometry depends on the class of $p$ in $\zz/ 12\zz$. To describe 
$\tcX_0(p^2)$ it is thus enough to describe the special fiber. 

The minimal regular model $\cX_0(p^2)$ is obtained from $\tcX_0(p^2)$ by three successive blow downs 
of curves in the special fiber $\tcX_0(p^2)_{\ff_p}$ and we shall denote by $\pi: \tcX_0(p^2) \to \cX_0(p^2)$ 
the morphism from Edixhoven's model. 
In the computations, we shall use \cite[Chapter 9, Theorem~2.12]{MR1917232} repeatedly. 
Let $\cdot$ be the local intersection pairing as in \cite{MR1917232}. 

In the following subsections, we shall explicitly describe the special fiber of the minimal regular model 
$\cX_0(p^2)$. We shall also compute the local intersection numbers among the various components in the fiber. 
The Arakelov intersections in this case are obtained by simply multiplying the local intersection numbers 
by $\log(p)$. The following proposition is the key finding of this section:

\begin{proposition} \label{keyprop}
  The special fiber of $ \cX_0(p^2)$ consists of two curves $C_{2,0}'$ and $C_{0,2}'$  and the 
  local intersection numbers are given by:
  \begin{equation*}
    C_{2,0}'\cdot C_{0,2}'= - (C_{2,0}')^2 = - (C_{0,2}')^2 = \frac{p^2-1}{24}.
  \end{equation*} 
\end{proposition}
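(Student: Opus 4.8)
The plan is to extract the complete intersection data from Edixhoven's explicit description of the special fibre $\tcX_0(p^2)_{\ff_p}$ recalled in Section~\ref{sec:MinimalModel} and to track it through the contraction $\pi : \tcX_0(p^2) \to \cX_0(p^2)$. First I would write down, case by case according to the class of $p$ in $\zz/12\zz$, the irreducible components of $\tcX_0(p^2)_{\ff_p}$, their multiplicities in the fibre, their arithmetic genera, and the matrix of local intersection numbers \cite{MR1056773}; the two distinguished components are the ``type $(2,0)$'' and ``type $(0,2)$'' curves $C_{2,0}$ and $C_{0,2}$, and the three remaining prime vertical divisors are exactly the ones contracted by $\pi$. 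I would then verify, using \cite[Chapter~9, Theorem~2.12]{MR1917232} together with the blow-down transformation rule (when a $(-1)$-curve $E$ is contracted, two curves $A,B$ with $A\cdot E = m_A$ and $B\cdot E = m_B$ acquire $A'\cdot B' = A\cdot B + m_A m_B$ and $(A')^2 = A^2 + m_A^2$), that after the three successive contractions only $C_{2,0}'$ and $C_{0,2}'$ survive in the special fibre of $\cX_0(p^2)$, as asserted.

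For the three claimed equalities I would separate the symmetric part from the single numerical value. The Atkin-Lehner involution $W_{p^2}$ is a $\qq$-automorphism of $X_0(p^2)$ exchanging the cusps $0$ and $\infty$; by uniqueness of the minimal model it extends to an automorphism of $\cX_0(p^2)$ and therefore interchanges $C_{2,0}'$ and $C_{0,2}'$. Consequently these two components enter the special fibre $F := \cX_0(p^2)_{\ff_p}$ with one and the same multiplicity $m$, and $(C_{2,0}')^2 = (C_{0,2}')^2$. Since $F$ is a fibre of $\cX_0(p^2) \to \spec \zz$, one has $F\cdot C = 0$ for every component $C$ of $F$; writing $F = m\,(C_{2,0}' + C_{0,2}')$ and expanding $F \cdot C_{2,0}' = 0$ gives $(C_{2,0}')^2 = -\,C_{2,0}'\cdot C_{0,2}'$. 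Together these identities already yield $-(C_{2,0}')^2 = -(C_{0,2}')^2 = C_{2,0}'\cdot C_{0,2}'$, reducing the proposition to the computation of the single number $C_{2,0}'\cdot C_{0,2}'$.

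To pin down that number I would read $C_{2,0}'\cdot C_{0,2}'$ off the blow-down bookkeeping of the first paragraph: the two end components meet only along the fibres over the supersingular points, and each contraction raises their mutual intersection according to the rule above, so the final value is a weighted count of those points. I would cross-check the outcome against the genus relation: the arithmetic genus of $F$ equals the generic genus $g_{p^2}$, and expanding $2 p_a(F) - 2 = F\cdot(F + K_{\cX_0(p^2)})$ with the adjunction formula for $C_{2,0}'$ and $C_{0,2}'$ expresses $g_{p^2}$ in terms of $C_{2,0}'\cdot C_{0,2}'$, the common multiplicity $m$, and the genera of the two components, so that substituting Remark~\ref{genus-form} must return $C_{2,0}'\cdot C_{0,2}' = \tfrac{p^2-1}{24}$. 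I expect the main obstacle to be precisely this bookkeeping: keeping the multiplicities, genera, and exceptional chains straight across all residue classes of $p$ modulo $12$ (the elliptic points at $j=0,1728$ alter the local picture), and confirming that each contracted divisor is a $(-1)$-curve whose removal introduces no singularity, so that the resulting model is genuinely minimal.
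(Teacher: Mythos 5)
Your proposal is correct and follows essentially the same route as the paper: read off the component data and intersection matrix from Edixhoven's special fibres case by case modulo $12$, contract $C_{1,1}$, then the image of $E$, then the image of $F$, and track the intersection numbers through the three contractions (your push-forward rule $A'\cdot B' = A\cdot B + (A\cdot E)(B\cdot E)$ is equivalent to the paper's computation of $\pi^* C_{2,0}'$ and $\pi^* C_{0,2}'$ followed by the projection formula, and it does reproduce $\tfrac{p^2-1}{24}$). Your Atkin--Lehner symmetry argument and the fibral relation $F\cdot C=0$ reducing the three equalities to a single number, plus the adjunction cross-check, are sound shortcuts the paper does not use, but they do not change the substance of the argument.
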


The proof of the previous proposition shall take up the rest of this section. It is divided into four cases
depending on the residue of $p$ modulo $12$.

\subsection{Case $p \equiv 1 \pmod{12}$.}
Following Edixhoven~\cite{MR1056773}, we draw the special fiber $V_p = \tcX_0(p^2)_{\ff_p}$ in Figure~\ref{fig:1mod12} in this case. Each component is a $\pp^1$ and the pair
$(n,m)$ adjacent to each component denotes the multiplicity of the component $n$ and the local 
self-intersection number $m$. The arithmetic genus is given by $g_{p^2} = 12k^2 -3k -1$ where $p = 12k+1$.
\begin{figure}
  \begin{center}
    \includegraphics[scale=0.8]{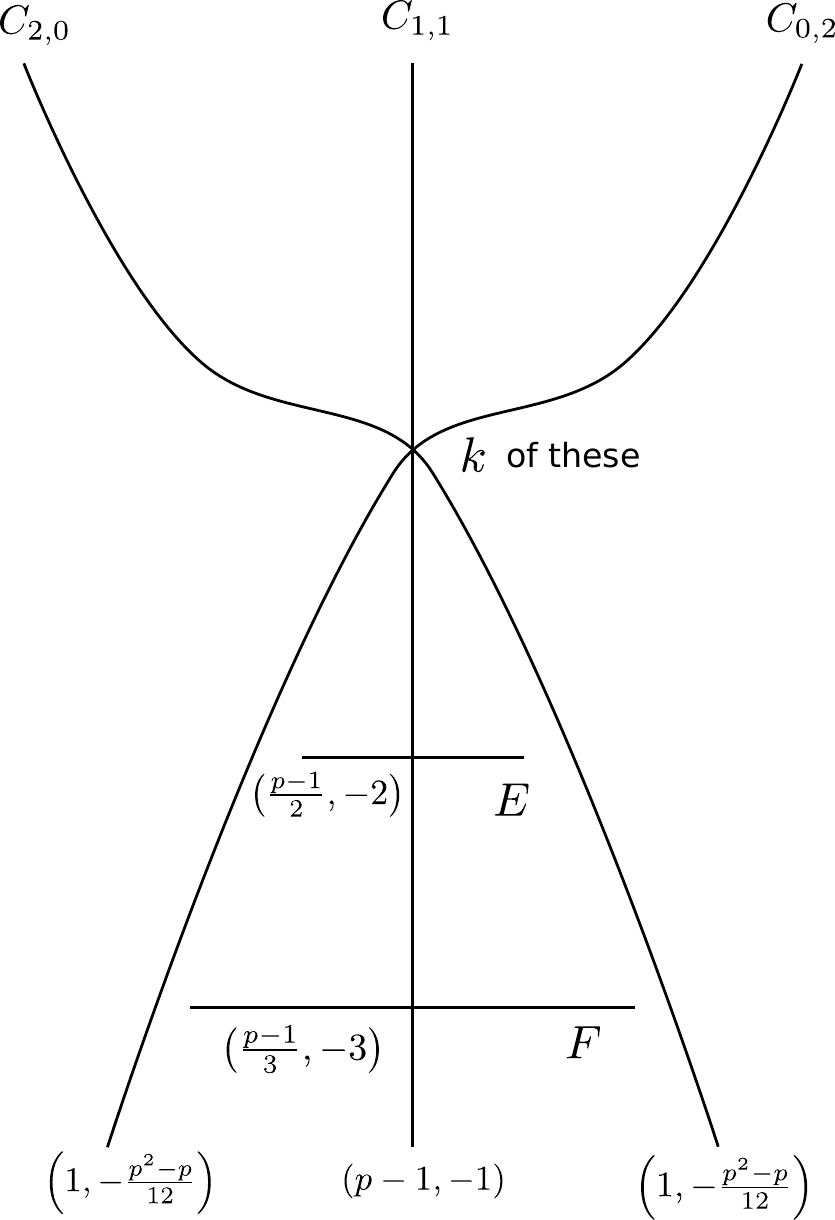}
  \end{center}      
  \caption{The special fiber $\tcX_0(p^2)_{\ff_p}$ when $p \equiv 1 \pmod{12}$.} 
  \label{fig:1mod12} 
\end{figure}

\begin{proposition} \label{pmod1imp}
  The local intersection numbers of the vertical components  supported on the special fiber of $\tcX_0(p^2)$ are given 
  in the following table.
  \begin{equation*}
    \renewcommand*{\arraystretch}{2}
    \begin{array}{l|ccccc}  
      & C_{2,0} & C_{0,2} & \hphantom{0} C_{1,1} \hphantom{0} & \hphantom{000} E \hphantom{000} 
      & \hphantom{000} F \hphantom{000} \\ \hline
      C_{2,0} & -\frac{p(p-1)}{12} & \frac{p-1}{12} & \frac{p-1}{12} & 0 & 0 \\
      C_{0,2} & \frac{p-1}{12} & -\frac{p(p-1)}{12} & \frac{p-1}{12} & 0 & 0 \\
      C_{1,1} & \frac{p-1}{12} & \frac{p-1}{12} & -1 & 1 & 1 \\        
      E & 0 & 0 & 1 & -2 & 0 \\
      F & 0 & 0 & 1 & 0 & -3
    \end{array}
  \end{equation*}
\end{proposition}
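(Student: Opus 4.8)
The plan is to take the five multiplicities and the five self-intersection numbers as data read directly off Edixhoven's Figure~\ref{fig:1mod12} (the pair $(n,m)$ attached to each component records its multiplicity $n$ and its self-intersection $m$), and to reduce the whole proposition to computing the off-diagonal entries of the table. Concretely, from the figure I would record the multiplicities $d_{C_{2,0}}=d_{C_{0,2}}=1$, $d_{C_{1,1}}=p-1$, $d_E=(p-1)/2$, $d_F=(p-1)/3$ --- all integral precisely because $p\equiv 1\pmod{12}$ --- together with the dual graph of the fiber: the components $C_{2,0},C_{0,2},C_{1,1}$ meet pairwise, while $E$ and $F$ each meet only $C_{1,1}$ and satisfy $E\cap F=\emptyset$. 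The disjointness statements yield at once all the vanishing entries $C_{2,0}\cdot E=C_{2,0}\cdot F=C_{0,2}\cdot E=C_{0,2}\cdot F=E\cdot F=0$.

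The engine for the remaining entries is the relation $V_p\cdot\Gamma=0$ for every prime vertical divisor $\Gamma$ supported on the special fiber $V_p=\tcX_0(p^2)_{\ff_p}=\sum_i d_i\Gamma_i$, which is exactly the repeated application of \cite[Chapter 9, Theorem~2.12]{MR1917232} announced in the text and is valid because $\tcX_0(p^2)$ is regular. Writing this relation at the terminal components $E$ and $F$, where the only incident edge is to $C_{1,1}$, gives $(p-1)(C_{1,1}\cdot E)=-d_E E^2=p-1$ and $(p-1)(C_{1,1}\cdot F)=-d_F F^2=p-1$, so that $C_{1,1}\cdot E=C_{1,1}\cdot F=1$.

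For the three remaining crossings I would write the fiber relation at $C_{2,0}$, at $C_{0,2}$, and at $C_{1,1}$. Subtracting the first two and using $d_{C_{2,0}}=d_{C_{0,2}}$ forces $C_{2,0}\cdot C_{1,1}=C_{0,2}\cdot C_{1,1}=:a$, with no appeal to any symmetry of the model. The relation at $C_{1,1}$, now that $C_{1,1}\cdot E$ and $C_{1,1}\cdot F$ are known, reads $2a=(p-1)-d_E-d_F=(p-1)/6$, hence $a=(p-1)/12$. Feeding this into the relation at $C_{2,0}$ gives $C_{2,0}\cdot C_{0,2}=\tfrac{p(p-1)}{12}-(p-1)a=(p-1)/12$, which completes the table. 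As a consistency check I would confirm the arithmetic genus: since $V_p^2=0$ and each $\Gamma_i\cong\pp^1$, adjunction gives $p_a(V_p)=1+\tfrac12\sum_i d_i(-2-\Gamma_i^2)=-1+\tfrac{(p-1)(p-4)}{12}$, which equals $12k^2-3k-1=g_{p^2}$ for $p=12k+1$.

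The main obstacle is not the linear algebra, which is routine, but the faithful transcription of Edixhoven's local analysis into the two inputs above. One must establish the dual graph --- in particular that the exceptional components $E$ and $F$ (with $E^2=-2$ and $F^2=-3$, coming from the resolution of the $\zz/2$- and $\zz/3$-quotient singularities at the elliptic points) attach only to the multiplicity-$(p-1)$ component $C_{1,1}$, are mutually disjoint, and are disjoint from $C_{2,0}$ and $C_{0,2}$ --- and confirm the multiplicities together with the self-intersections $C_{2,0}^2=C_{0,2}^2=-p(p-1)/12$ of the two principal components. This is precisely where the hypothesis $p\equiv 1\pmod{12}$ is used, both to make $(p-1)/12$, $(p-1)/2$, $(p-1)/3$ integral and to single out the correct shape of $V_p$ among Edixhoven's four congruence cases.
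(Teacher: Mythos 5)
Your proposal is correct and follows essentially the same route as the paper: read the multiplicities, self-intersections, and dual graph off Edixhoven's figure, then solve the linear relations $V_p\cdot\Gamma=0$ for the off-diagonal entries. The only additions are cosmetic strengthenings the paper leaves implicit (deriving $C_{2,0}\cdot C_{1,1}=C_{0,2}\cdot C_{1,1}$ by subtracting the two fiber relations rather than by symmetry, and the arithmetic-genus consistency check), and your arithmetic reproduces the table exactly.
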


\begin{proof}
  The self-intersections $C_{1,1}^2, E^2$ and $F^2$ were calculated by Edixhoven (see \cite[Fig. 1.5.2.1]{MR1056773}). 
  Since $V_p$ is the principal divisor $(p)$, we must have $V_p \cdot D = 0$ for any vertical divisor $D$. Moreover, 
  $V_p = C_{2,0} + C_{0,2} + (p-1) C_{1,1} + \frac{p-1}{2} E  + \frac{p-1}{3} F$ is the linear combination of all 
  the prime divisors of the special fiber counted with multiplicities. All the other intersection numbers can be 
  easily calculated using these information. For example $V_p\cdot E = 0$ gives 
  $(p-1) C_{1,1}\cdot E + \frac{(p-1)}{2} E^2 = 0$, now since $E^2 = -2$ we have $C_{1,1}\cdot E = 1$. The other
  calculations are analogous and we omit them here.
\end{proof}

\begin{proof}[Proof of Proposition~\ref{keyprop} when $p \equiv 1 \pmod {12}$]
  By Proposition~\ref{pmod1imp}, note that the component $C_{1,1}$ is rational and has 
  self-intersection $-1$. By Castelnuovo's criterion \cite[Chapter~9, Theorem~3.8]{MR1917232} we can thus 
  blow down $C_{1,1}$ without introducing a singularity. Let $\cX_0(p^2)'$ be the corresponding arithmetic surface 
  and $\pi_1: \tcX_0(p^2) \to \cX_0(p^2)'$, be the blow down morphism.

  For $E'  = \pi_1(E)$,  we see that $\pi_1^* E' = E + C_{1,1}$. In fact $\pi_1^* E' = E + \mu C_{1,1}$ 
  (see Liu \cite[Chapter 9, Proposition 2.18]{MR1917232}). 
  Using  \cite[Chapter 9, Theorem~2.12]{MR1917232}, we obtain $0 = C_{1,1}\cdot \pi_1^* E' = 1  - \mu$. 
  Hence, we deduce that  $(E')^2 = (\pi_1^* E')^2 = \langle E + C_{1,1}, E +C_{1,1} \rangle = -1$. Thus $E'$  
  is a rational curve in the special fiber of $\cX_0(p^2)'$ with self intersection $-1$. It can thus be blown 
  down again and the resulting scheme is again regular. Let $\cX_0(p^2)''$ be the blow down and $\pi_2: \tcX_0(p^2) 
  \to \cX_0(p^2)''$ the morphism from $\tcX_0(p^2)$. 

  Let $F' = \pi_2(F)$, and if $\pi_2^*F' = F + \mu C_{1,1} + \nu E$ for $\mu, \nu \in \qq$ then using the 
  fact that $C_{1,1} \cdot \pi_2^*F' = E \cdot \pi_2^*F' = 0$ we find $\mu = 2$ and $\nu =1$. This 
  yields $\pi_2^*F' = F + 2 C_{1,1} + E$ and hence $(F')^2 = -1$.  We can thus blow down $F'$  further to 
  arrive finally at an arithmetic surface $\cX_0(p^2)$. This is the minimal regular model of $X_0(p^2)$ since 
  no further blow down is possible. Let $\pi: \tcX_0(p^2) \to \cX_0(p^2)$ be the morphism obtained by composing
  the sequence of blow downs.

  The special fiber of $\cX_0(p^2)$ consists of two curves $C_{2,0}'$ and $C_{0,2}'$, that are the images 
  of $C_{2,0}$ and $C_{0,2}$ respectively under $\pi$. They intersect with high multiplicity at a single point. 
  To calculate the intersections we notice that 
  \begin{align*}
    \pi^* C_{2,0}' = C_{2,0} + \frac{p-1}{2} C_{1,1} + \frac{p-1}{4} E + \frac{p-1}{6} F, \\[10pt]
    \pi^* C_{0,2}' = C_{0,2} + \frac{p-1}{2} C_{1,1} + \frac{p-1}{4} E + \frac{p-1}{6} F,
  \end{align*} 
  obtained as before from the fact that the intersections of $\pi^* C'_{2,0}$ and $\pi^* C'_{0,2}$ 
  with $C_{1,1}$, $E$ and $F$ are all $0$. This yields
  \begin{equation*}
    C_{2,0}'\cdot C_{0,2}'= - (C_{2,0}')^2 = - (C_{0,2}')^2 = \frac{p^2-1}{24}.
  \end{equation*}
\end{proof}

\subsection{Case $p \equiv 5 \pmod{12}$.}

In this case the special fiber $V_p = \tcX_0(p^2)_{\ff_p}$ is described by 
Figure~\ref{fig:5mod12}. Each component is a $\pp^1$ and the genus is given by 
$g_{p^2} = 12k^2 + 5k$ where $p = 12k+5$.

\begin{figure}[h]
  \begin{center}
    \includegraphics[scale=0.8]{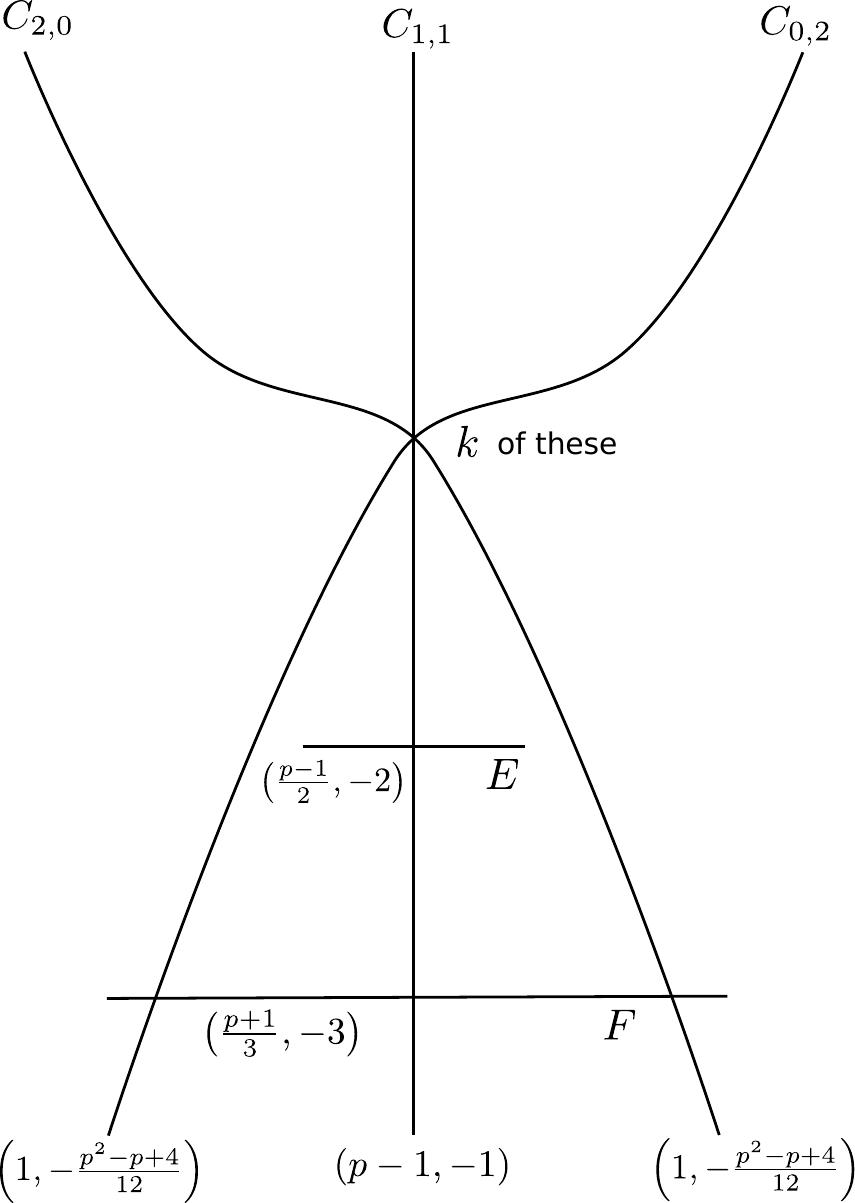}
  \end{center}
  \caption{The special fiber $\tcX_0(p^2)_{\ff_p}$ when $p \equiv 5 \pmod{12}$.}
  \label{fig:5mod12}
\end{figure}  

\begin{proposition} \label{pmod5imp}
  The local intersection numbers of the vertical components  supported on the special fiber of 
  $\tcX_0(p^2)$ for $p \equiv 5 \pmod {12}$ are given in the following table.  
  \begin{equation*} \displaystyle
    \renewcommand*{\arraystretch}{2}
    \begin{array}{l|ccccc}
       & C_{2,0} & C_{0,2} & \hphantom{0} C_{1,1} \hphantom{0} & \hphantom{000} E \hphantom{000} 
       & \hphantom{000} F \hphantom{000} \\ \hline
    C_{2,0} & -\frac{p^2-p+4}{12} & \frac{p-5}{12} & \frac{p-5}{12} & 0 & 1 \\
    C_{0,2} & \frac{p-5}{12} & -\frac{p^2-p+4}{12} & \frac{p-5}{12} & 0 & 1 \\
    C_{1,1} & \frac{p-5}{12} & \frac{p-5}{12} & -1 & 1 & 1 \\        
    E & 0 & 0 & 1 & -2 & 0 \\
    F & 1 & 1 & 1 & 0 & -3
    \end{array}
  \end{equation*}
\end{proposition}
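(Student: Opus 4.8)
The plan is to run the argument of Proposition~\ref{pmod1imp} (the case $p \equiv 1 \pmod{12}$) essentially verbatim, the only genuine inputs being Edixhoven's description of the special fiber $V_p = \tcX_0(p^2)_{\ff_p}$ and the fact that $V_p$ is the principal divisor $(p)$, so that $V_p \cdot D = 0$ for every vertical prime divisor $D$. First I would read off from Figure~\ref{fig:5mod12}: every component is a $\pp^1$; the self-intersections of the exceptional components are $C_{1,1}^2 = -1$, $E^2 = -2$, $F^2 = -3$; the multiplicities of the five components in $V_p$; and the dual graph, i.e.\ which components meet. The one structural change from the $p \equiv 1$ case---caused by $p \equiv 1 \pmod 4$ but $p \equiv 2 \pmod 3$---is that $F$ is now attached to $C_{2,0}$ and $C_{0,2}$ instead of being a tail off $C_{1,1}$; this shows up as $C_{2,0}\cdot F = C_{0,2}\cdot F = 1$ and $E \cdot F = 0$ in the target table, and it forces a different multiplicity for $F$.

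Concretely, I expect the principal divisor to read
\[
V_p = C_{2,0} + C_{0,2} + (p-1)\,C_{1,1} + \tfrac{p-1}{2}\,E + \tfrac{p+1}{3}\,F,
\]
where $\tfrac{p+1}{3}$ (an integer precisely because $p \equiv 2 \pmod 3$) replaces the $\tfrac{p-1}{3}$ of the previous case. Since $E$ is a tail attached only to $C_{1,1}$, the relation $V_p \cdot E = 0$ gives $C_{1,1}\cdot E = 1$ exactly as in Proposition~\ref{pmod1imp}; the remaining adjacent pairs $C_{1,1}\cdot F$, $C_{2,0}\cdot F$, $C_{0,2}\cdot F$ are transverse crossings equal to $1$, and $V_p \cdot F = 0$ then serves as an automatic consistency check on the multiplicity of $F$.

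Next, using the symmetry $C_{2,0}^2 = C_{0,2}^2$ and $C_{2,0}\cdot C_{1,1} = C_{0,2}\cdot C_{1,1}$, the equation $V_p \cdot C_{1,1} = 0$ gives $C_{2,0}\cdot C_{1,1} = \tfrac{p-5}{12}$; reading the remaining transverse value $C_{2,0}\cdot C_{0,2} = \tfrac{p-5}{12}$ off the figure (the number of points at which the two principal components meet), the equation $V_p \cdot C_{2,0} = 0$ then yields $C_{2,0}^2 = -\tfrac{p^2-p+4}{12}$. All of this is a short, routine linear solve once the configuration is in place, and it completes the table, hence this case of Proposition~\ref{keyprop}.

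The main obstacle is therefore not the computation but the faithful transcription of Edixhoven's special fiber in this residue class: getting the dual graph right, and in particular the changed attachment and multiplicity of $F$, is exactly the point at which the $p \equiv 1$ bookkeeping must be altered. Once the multiplicities and the self-intersections $-1,-2,-3$ of the exceptional curves are correctly recorded, the system $\{V_p \cdot D = 0\}$ is over-determined and internally consistent, so the few read-offs needed are cross-checked automatically and the entries of the table are forced.
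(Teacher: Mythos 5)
Your proposal is correct and is exactly the argument the paper intends: the paper omits the proof of this case, stating that the calculations are "very similar to the previous case," and your linear solve from $V_p\cdot D=0$ with $V_p = C_{2,0}+C_{0,2}+(p-1)C_{1,1}+\frac{p-1}{2}E+\frac{p+1}{3}F$ reproduces every entry of the table (and is consistent with the pullback formulas $\pi^*C'_{2,0}$, $\pi^*C'_{0,2}$ used later in the paper's proof of Proposition~\ref{keyprop} for this residue class). The only slight imprecision is the remark that $F$ is attached to $C_{2,0}$ and $C_{0,2}$ "instead of" $C_{1,1}$ --- it meets all three, as your subsequent list of transverse crossings correctly records.
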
 

The calculations are very similar to the previous case hence we omit the proof.

\begin{proof}[Proof of Proposition~\ref{keyprop} when $p \equiv 5 \pmod {12}$]
  The minimal regular model is obtained by blowing down $C_{1,1}$, then the image of $E$ and then the 
  image of $F$ as in the previous section. We again denote the minimal regular model by $\cX_0(p^2)$,
  and by $\pi: \tcX_0(p^2) \to \cX_0(p^2)$ the morphism obtained by the successive blow downs.

  The special fiber of $\cX_0(p^2)$ again consists of two curves $C_{2,0}'$ and $C_{0,2}'$ that are the 
  images of $C_{2,0}$ and $C_{0,2}$ respectively under $\pi$. The curves $C_{2,0}'$ and $C_{0,2}'$ 
  intersect at a single point. In this case we have
  \begin{align*}
    \pi^* C_{2,0}' = C_{2,0} + \frac{p-1}{2} C_{1,1} + \frac{p-1}{4} E + \frac{p+1}{6} F, \\[10pt]
    \pi^* C_{0,2}' = C_{0,2} + \frac{p-1}{2} C_{1,1} + \frac{p-1}{4} E + \frac{p+1}{6} F,
  \end{align*} 
  obtained as before from the fact that the intersections of $\pi^* C'_{2,0}$ and $\pi^* C'_{0,2}$ 
  with $C_{1,1}$, $E$ and $F$ are all $0$. This yields
  \begin{equation*}
    C_{2,0}' \cdot C_{0,2}' = - (C_{2,0}')^2 = - (C_{0,2}')^2 = \frac{p^2-1}{24}.
  \end{equation*}
\end{proof}

\subsection{Case $p \equiv 7 \pmod{12}$.}

In this case the special fiber $V_p = \tcX_0(p^2)_{\ff_p}$ is described by Figure~\ref{fig:7mod12}. 
Each component is a $\pp^1$ occurring with the specified multiplicity. The genus is given by 
$g_{p^2} = 12k^2 + 9k+1$ where $p = 12k+7$. 
\begin{figure}[h]
  \begin{center}
    \includegraphics[scale=0.8]{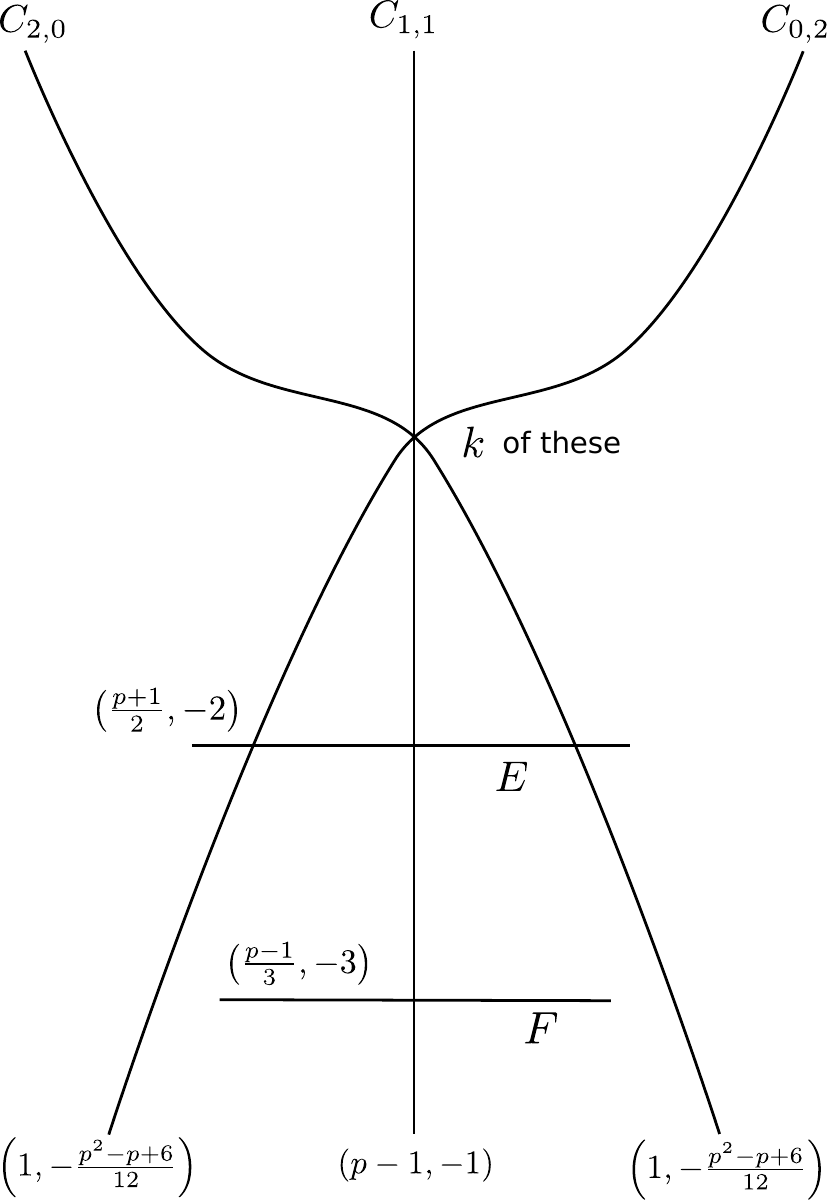}
  \end{center}
  \caption{The special fiber $\tcX_0(p^2)_{\ff_p}$ when $p \equiv 7 \pmod{12}$.}
  \label{fig:7mod12}
\end{figure}

\begin{proposition} \label{pmod7imp}
  The local intersection numbers of the prime divisors supported on the special fiber
  of $\tcX_0(p^2)$ for $p \equiv 7 \pmod {12}$ are given in the following table.
  \begin{equation*}
    \renewcommand*{\arraystretch}{2}
    \begin{array}{l|ccccc}
       & C_{2,0} & C_{0,2} & \hphantom{0} C_{1,1} \hphantom{0} & \hphantom{000} E \hphantom{000} 
       & \hphantom{000} F \hphantom{000} \\ \hline
    C_{2,0} & -\frac{p^2-p+6}{12} & \frac{p-7}{12} & \frac{p-7}{12} & 1 & 0 \\
    C_{0,2} & \frac{p-7}{12} & -\frac{p^2-p+6}{12} & \frac{p-7}{12} & 1 & 0 \\
    C_{1,1} & \frac{p-7}{12} & \frac{p-7}{12} & -1 & 1 & 1 \\
    E & 1 & 1 & 1 & -2 & 0 \\
    F & 0 & 0 & 1 & 0 & -3
    \end{array}
  \end{equation*}
\end{proposition}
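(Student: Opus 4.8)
The plan is to follow verbatim the method used for Proposition~\ref{pmod1imp}, now reading the combinatorial data off Figure~\ref{fig:7mod12}. From Edixhoven's computation I take as input the self-intersections of the exceptional components, $C_{1,1}^2 = -1$, $E^2 = -2$ and $F^2 = -3$, together with the dual graph recorded in the figure: the only nonzero off-diagonal intersections are those among $C_{2,0}, C_{0,2}, C_{1,1}$ and the transversal meetings $C_{2,0}\cdot E = C_{0,2}\cdot E = C_{1,1}\cdot E = C_{1,1}\cdot F = 1$, with $C_{2,0}\cdot F = C_{0,2}\cdot F = E\cdot F = 0$. The single guiding principle is that $V_p = \tcX_0(p^2)_{\ff_p}$ is the principal divisor $(p)$, so that $V_p \cdot D = 0$ for every vertical divisor $D$.

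First I would fix the multiplicities, writing $V_p = C_{2,0} + C_{0,2} + (p-1) C_{1,1} + \frac{p+1}{2} E + \frac{p-1}{3} F$, where the congruence $p \equiv 7 \pmod{12}$ guarantees that $\tfrac{p+1}{2}$ and $\tfrac{p-1}{3}$ are integers. The one genuine change from the case $p\equiv 1$ is that here $E$ also meets $C_{2,0}$ and $C_{0,2}$, which is why its multiplicity becomes $\tfrac{p+1}{2}$ rather than $\tfrac{p-1}{2}$; this is confirmed a posteriori because $V_p\cdot E = 0$ and $V_p\cdot F = 0$ reduce, after inserting the transversal values $1$, to the identities $(p+1) - (p+1) = 0$ and $(p-1)-(p-1)=0$.

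Next I would extract the remaining entries. By the symmetry interchanging $C_{2,0}$ and $C_{0,2}$ it suffices to determine $a = C_{2,0}\cdot C_{1,1} = C_{0,2}\cdot C_{1,1}$, $b = C_{2,0}\cdot C_{0,2}$ and $c = C_{2,0}^2 = C_{0,2}^2$. The equation $V_p\cdot C_{1,1} = 0$ reads $2a - (p-1) + \tfrac{p+1}{2} + \tfrac{p-1}{3} = 0$ and yields $a = \tfrac{p-7}{12}$. Finally $V_p\cdot C_{2,0} = 0$ gives $c + b + (p-1)a + \tfrac{p+1}{2} = 0$; substituting $a$ and the value of $b$ produces $c = -\tfrac{p^2-p+6}{12}$, the asserted self-intersection, and the full table is then forced.

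The main obstacle is that the relations $V_p\cdot D = 0$ alone do not separate $b$ and $c$: they pin down $a$ and the sum $b + c$, but one further input is required. This is precisely the geometric fact that the three thick components $C_{2,0}, C_{0,2}, C_{1,1}$ all pass through the same supersingular points, so that $C_{2,0}\cdot C_{0,2} = C_{2,0}\cdot C_{1,1} = \tfrac{p-7}{12}$; here I must invoke Edixhoven's explicit description underlying Figure~\ref{fig:7mod12} rather than pure linear algebra. Once $b = \tfrac{p-7}{12}$ is granted, the argument is, as in the case $p\equiv 5$, a routine variant of the one already given for $p \equiv 1 \pmod{12}$.
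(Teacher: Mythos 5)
Your proposal is correct and follows exactly the method the paper uses (the paper only spells it out for $p\equiv 1\pmod{12}$ and silently omits it here): read the multiplicities, the configuration, and the self-intersections $C_{1,1}^2=-1$, $E^2=-2$, $F^2=-3$ from Edixhoven's figure, then solve the linear relations $V_p\cdot D=0$; all your arithmetic checks out, including the consistency of $V_p = C_{2,0}+C_{0,2}+(p-1)C_{1,1}+\frac{p+1}{2}E+\frac{p-1}{3}F$ with $\pi^*C_{2,0}'+\pi^*C_{0,2}'$ later in the paper. Your observation that the relations $V_p\cdot D=0$ pin down only $b+c$ and that one must additionally take $C_{2,0}\cdot C_{0,2}=\frac{p-7}{12}$ (the count of supersingular points with $j\neq 0,1728$) from Edixhoven's description is a genuine and worthwhile clarification of a step the paper glosses over, but it does not change the approach.
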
 

\begin{proof}[Proof of Proposition~\ref{keyprop} when $p \equiv 7 \pmod {12}$]
  The minimal regular model is obtained by blowing down $C_{1,1}$, then the image of $E$ and then the 
  image of $F$ as in the previous sub-section. Let $\cX_0(p^2)$ be the minimal regular model
  and $\pi: \tcX_0(p^2) \to \cX_0(p^2)$ the morphism obtained by the successive blow downs.

  The special fiber of $\cX_0(p^2)$ consists of two curves $C_{2,0}'$ and $C_{0,2}'$ that are the images of 
  $C_{2,0}$ and $C_{0,2}$ respectively under $\pi$. The curves $C_{2,0}'$ and $C_{0,2}'$ intersect at a single 
  point. Here
  \begin{align*}
    \pi^* C_{2,0}' = C_{2,0} + \frac{p-1}{2} C_{1,1} + \frac{p+1}{4} E + \frac{p-1}{6} F, \\[10pt]
    \pi^* C_{0,2}' = C_{0,2} + \frac{p-1}{2} C_{1,1} + \frac{p+1}{4} E + \frac{p-1}{6} F,
  \end{align*} 
  easily calculated using fact that the intersections of $\pi^* C'_{2,0}$ and $\pi^* C'_{0,2}$ 
  with $C_{1,1}$, $E$ and $F$ are all $0$. This yields
  \begin{equation*}
    C_{2,0}' \cdot C_{0,2}'  = - (C_{2,0}')^2 = - (C_{0,2}')^2 = \frac{p^2-1}{24}.
  \end{equation*}
\end{proof}

\subsection{Case $p \equiv 11 \pmod{12}$.}

In this final case the special fiber $V_p = \tcX_0(p^2)_{\ff_p}$ is described by Figure~\ref{fig:11mod12}. 
Each component is a $\pp^1$. The genus is given by $g_{p^2} = 12k^2 + 17k+6$ where $p = 12k+11$.

\begin{figure}[h]
  \begin{center}
    \includegraphics[scale=0.8]{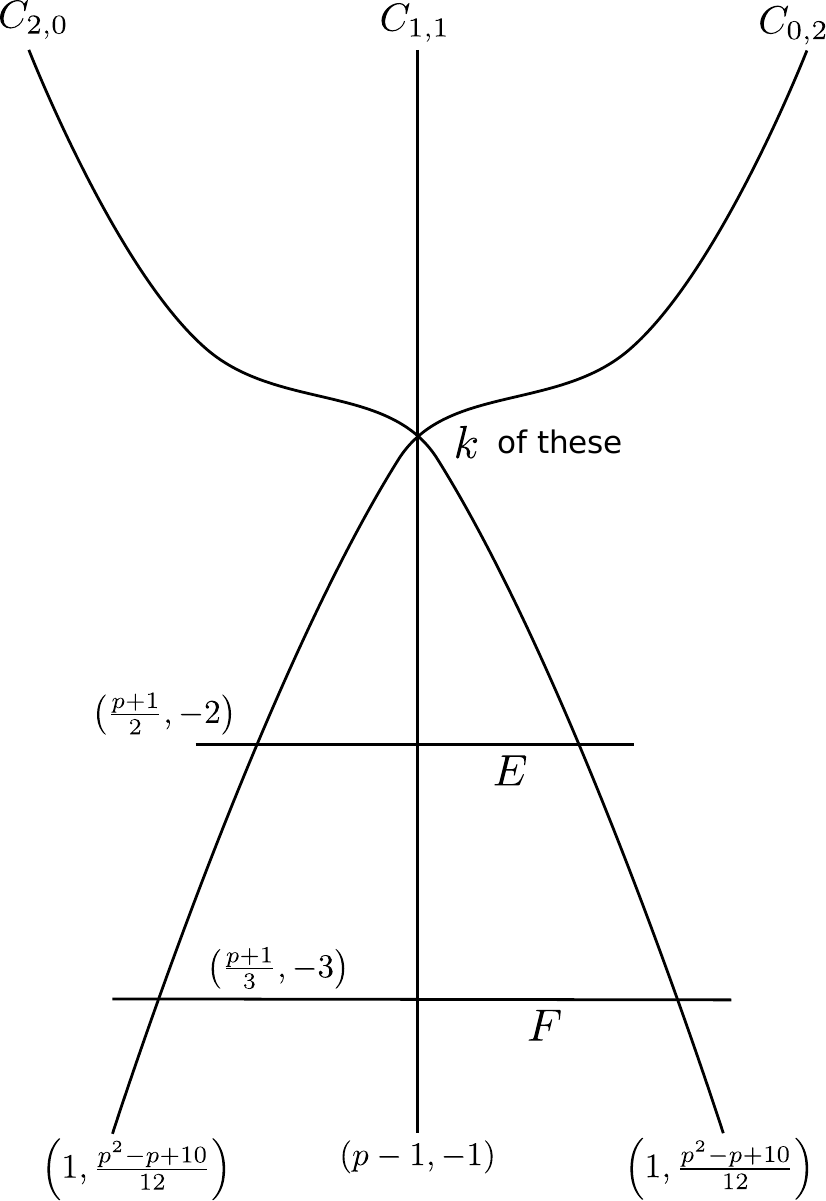}
  \end{center}
  \caption{The special fiber $\tcX_0(p^2)_{\ff_p}$ when $p \equiv 11 \pmod{12}$.}
  \label{fig:11mod12}
\end{figure}

\begin{proposition} \label{pmod11imp}
  The local intersection numbers of the prime divisors supported on the special fiber
  of $\tcX_0(p^2)$ for $p \equiv 11 \pmod {12}$ are given in the following table.
  \begin{equation*}
    \renewcommand*{\arraystretch}{2}
    \begin{array}{l|ccccc}
       & C_{2,0} & C_{0,2} & \hphantom{00} C_{1,1} \hphantom{00} & \hphantom{000} E \hphantom{000} 
       & \hphantom{000} F \hphantom{000} \\ \hline
    C_{2,0} & -\frac{p^2-p+10}{12} & \frac{p-11}{12} & \frac{p-11}{12} & 1 & 1 \\
    C_{0,2} & \frac{p-11}{12} & -\frac{p^2-p+10}{12} & \frac{p-11}{12} & 1 & 1 \\
    C_{1,1} & \frac{p-11}{12} & \frac{p-11}{12} & -1 & 1 & 1 \\
    E & 1 & 1 & 1 & -2 & 0 \\
    F & 1 & 1 & 1 & 0 & -3
    \end{array}
  \end{equation*}
\end{proposition}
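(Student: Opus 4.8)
The strategy is identical to the three cases already treated, so I would keep the argument brief. The inputs I take from Edixhoven's description of the fiber (Figure~\ref{fig:11mod12}) are the self-intersections $C_{1,1}^2 = -1$, $E^2 = -2$, $F^2 = -3$, the incidence pattern of the five components, and the off-diagonal local multiplicities of $C_{2,0}$, $C_{0,2}$, $C_{1,1}$ among themselves. The one feature that distinguishes this case from $p \equiv 1 \pmod{12}$ is that \emph{both} $E$ and $F$ now meet $C_{2,0}$ and $C_{0,2}$ transversally (so $C_{2,0}\cdot E = C_{2,0}\cdot F = C_{0,2}\cdot E = C_{0,2}\cdot F = 1$), while $E\cdot F = 0$ as before. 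Concretely, I would first record the special fiber as the principal divisor
\[
V_p = C_{2,0} + C_{0,2} + (p-1)\, C_{1,1} + \frac{p+1}{2}\, E + \frac{p+1}{3}\, F,
\]
noting that $p+1 \equiv 0 \pmod{12}$ makes all coefficients integral, and invoke $V_p \cdot D = 0$ for every vertical prime divisor $D$ because $V_p = (p)$.

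The remaining entries of the table are then pinned down one at a time from these orthogonality relations, exactly as in the proof of Proposition~\ref{pmod1imp}. Testing against $D = E$ and $D = F$ reproduces the transversal incidences $C_{1,1}\cdot E = C_{1,1}\cdot F = 1$ and serves as a consistency check on the multiplicities. Testing against $D = C_{1,1}$ gives
\[
2\,(C_{2,0}\cdot C_{1,1}) - (p-1) + \frac{p+1}{2} + \frac{p+1}{3} = 0,
\]
so that $C_{2,0}\cdot C_{1,1} = C_{0,2}\cdot C_{1,1} = \frac{p-11}{12}$, the same value being read off for $C_{2,0}\cdot C_{0,2}$ from Figure~\ref{fig:11mod12}. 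Finally, testing against $D = C_{2,0}$ yields
\[
C_{2,0}^2 + \frac{p-11}{12} + (p-1)\frac{p-11}{12} + \frac{p+1}{2} + \frac{p+1}{3} = 0,
\]
which solves to $C_{2,0}^2 = -\frac{p^2-p+10}{12}$, and by the $C_{2,0}\leftrightarrow C_{0,2}$ symmetry the same holds for $C_{0,2}^2$.

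There is no real obstacle here: the computation is purely linear-algebraic once $V_p$ is written down correctly. The only place that genuinely demands care, and the sole difference from the $p \equiv 1$ case, is the bookkeeping of the two extra incidences of $E$ and $F$ with $C_{2,0}$ and $C_{0,2}$. These raise the multiplicities of $E$ and $F$ in the special fiber from $\frac{p-1}{2}, \frac{p-1}{3}$ (as in the case $p\equiv 1$) up to $\frac{p+1}{2}, \frac{p+1}{3}$, and they contribute two additional $+1$ terms to the equation $V_p\cdot C_{2,0}=0$; these together account for the constant term $10$ in the numerator of $C_{2,0}^2$, as against $0$, $4$, $6$ in the cases $p\equiv 1,5,7$. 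Once the multiplicities are fixed correctly, every displayed identity above reduces to an elementary fraction computation, so I would simply carry these out and omit the symmetric duplicates, much as in the earlier three cases.
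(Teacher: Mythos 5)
Your computation is correct and is exactly the argument the paper intends: the paper proves only the case $p\equiv 1\pmod{12}$ (Proposition~\ref{pmod1imp}) and omits the remaining cases as ``very similar,'' and your proof carries out precisely that method --- take $C_{1,1}^2$, $E^2$, $F^2$ and the incidence data from Edixhoven's figure, write down $V_p$ with the correct multiplicities $\frac{p+1}{2}$ and $\frac{p+1}{3}$ for $E$ and $F$ (consistent with $V_p\cdot E=V_p\cdot F=0$ given the two extra incidences), and solve the linear relations $V_p\cdot D=0$. All the displayed identities check out and reproduce the table of Proposition~\ref{pmod11imp}.
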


Let us now complete the proof of Proposition~\ref{keyprop} by presenting the final case. 

\begin{proof}[Proof of Proposition~\ref{keyprop} when $p \equiv 11 \pmod {12}$]
  The minimal regular model is again obtained by blowing down $C_{1,1}$, then the image of $E$ and then the 
  image of $F$ as in the previous section. We again denote the minimal regular model by $\cX_0(p^2)$.
  Let $\pi: \tcX_0(p^2) \to \cX_0(p^2)$ be the morphism obtained by the successive blow downs.

  The special fiber of $\cX_0(p^2)$ consists of two curves $C_{2,0}'$ and $C_{0,2}'$ that are the images of 
  $C_{2,0}$ and $C_{0,2}$ respectively under $\pi$ intersecting at a single point. Mimicking the calculations 
  of the previous sub-section
  \begin{align*}
    \pi^* C_{2,0}' = C_{2,0} + \frac{p-1}{2} C_{1,1} + \frac{p+1}{4} E + \frac{p+1}{6} F, \\[10pt]
    \pi^* C_{0,2}' = C_{0,2} + \frac{p-1}{2} C_{1,1} + \frac{p+1}{4} E + \frac{p+1}{6} F.
  \end{align*} 
  This again yields
  \begin{equation*}
    C_{2,0}' \cdot C_{0,2}' = - (C_{2,0}')^2 = - (C_{0,2}')^2 = \frac{p^2-1}{24}.
  \end{equation*}
\end{proof}

\section{Algebraic part of self-intersection}
We continue with the notation from Section \ref{sec:MinimalModel}. Let $H_0$ and $H_{\infty}$ be the 
sections of $\cX_0(p^2)/\zz$ corresponding to the cusps $0, \infty \in X_0(p^2)(\qq)$. The horizontal divisor 
$H_0$ intersects exactly one of the curves of the special fiber at an $\ff_p$ rational point transversally 
(cf.  Liu~\cite[Chapter~9, Proposition~1.30 and Corollary~1.32]{MR1917232}). We call that component
$C_0'$. It follows from the cusp and component labelling of Katz and Mazur~\cite[p.~296]{MR772569} that 
$H_{\infty}$ meets the other component transversally and we call it $C_{\infty}'$. 
The components $C_0'$ and $C_{\infty}'$ intersect in a single point. 

Recall that the local intersection numbers are given by [cf. Proposition \ref{keyprop}]:
\begin{equation} \label{eq:intersectionnumber}
  C_0'\cdot C_{\infty}' = - (C_0')^2 = -(C_{\infty}')^2 = \frac{p^2 - 1}{24}.
\end{equation}
Let $K_{\cX_0(p^2)}$ be a canonical divisor of $\cX_0(p^2)$, that is any divisor whose corresponding line
bundle is the relative dualizing sheaf. We then have the following result:

\begin{lemma} \label{lem:divisors}
  For $s_p = \dfrac{p^2-1}{24}$, consider the vertical divisors $V_0 = -\dfrac{g_{p^2} - 1}{s_p} C_0'$ and 
  $V_{\infty} = -\dfrac{g_{p^2} - 1}{s_p} C_{\infty}'$.  The divisors 
  \begin{equation*}
    D_m = K_{\cX_0(p^2)} - (2g_{p^2} -2)H_m + V_m, \qquad m\in \{0, \infty\}
  \end{equation*}
  are orthogonal to all vertical divisors of $\cX_0(p^2)$ with respect to the Arakelov intersection
  pairing.
\end{lemma}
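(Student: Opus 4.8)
The plan is to reduce the claim to checking orthogonality against each prime vertical divisor, and to use at the outset that a vertical divisor is trivial on every complex fibre $\cX_0(p^2)_\sigma$, so its archimedean contribution to the Arakelov pairing vanishes; hence $\langle D_m, Z\rangle$ is computed entirely by the finite (geometric) intersection numbers weighted by $\log q$, and it suffices to work with the local pairing $\cdot$ of Proposition~\ref{keyprop}. First I would dispose of the primes $q \neq p$: there $\cX_0(p^2)_{\ff_q}$ is smooth and irreducible, so the only prime vertical divisor is the full fibre, and intersection with a full fibre depends only on generic-fibre degrees. Since $\deg (K_{\cX_0(p^2)})_\qq = 2g_{p^2}-2$, $\deg (H_m)_\qq = 1$, and $V_m$ is vertical so that $\deg(V_m)_\qq = 0$, one gets $D_m \cdot \cX_0(p^2)_{\ff_q} = \big((2g_{p^2}-2) - (2g_{p^2}-2) + 0\big)\log q = 0$.

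It remains to treat $q = p$, where the prime vertical divisors are $C_0'$ and $C_\infty'$. Here the full fibre is the principal divisor $(p) = C_0' + C_\infty'$, and the same degree computation gives $D_m \cdot (p) = 0$; consequently $D_m\cdot C_0' + D_m\cdot C_\infty' = 0$, so it suffices to prove $D_m \cdot C_m' = 0$ for the single component met by $H_m$. I would expand this using adjunction. Recalling from Proposition~\ref{keyprop} that $(C_m')^2 = -s_p$ and $C_0'\cdot C_\infty' = s_p$, the adjunction formula gives $K_{\cX_0(p^2)}\cdot C_m' = 2p_a(C_m') - 2 - (C_m')^2 = 2p_a(C_m') - 2 + s_p$. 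Since $H_m$ meets $C_m'$ transversally in one $\ff_p$-rational point, $H_m\cdot C_m' = 1$, while $V_m\cdot C_m' = -\tfrac{g_{p^2}-1}{s_p}(C_m')^2 = g_{p^2}-1$. Collecting the terms (each intersection at $p$ carrying the factor $\log p$) reduces the vanishing of $D_m\cdot C_m'$ to the single identity $p_a(C_m') = \tfrac12(g_{p^2}+1-s_p)$.

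Finally I would pin down this arithmetic genus. By flatness of $\cX_0(p^2)\to\spec\zz$ the special fibre has arithmetic genus $p_a(\cX_0(p^2)_{\ff_p}) = g_{p^2}$, and since $\cX_0(p^2)_{\ff_p} = C_0' + C_\infty'$ with $C_0'\cdot C_\infty' = s_p$, the additivity formula $p_a(A+B) = p_a(A) + p_a(B) + A\cdot B - 1$ yields $p_a(C_0') + p_a(C_\infty') = g_{p^2} + 1 - s_p$. The Atkin–Lehner involution $W_{p^2}$, which interchanges the cusps $0$ and $\infty$ and extends to an automorphism of the minimal regular model, interchanges $C_0'$ and $C_\infty'$ and hence forces $p_a(C_0') = p_a(C_\infty')$; each therefore equals $\tfrac12(g_{p^2}+1-s_p)$, which is exactly the identity required. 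Applying $W_{p^2}$ to the case just settled (it carries $K_{\cX_0(p^2)} \mapsto K_{\cX_0(p^2)}$, $H_\infty \mapsto H_0$, $V_\infty\mapsto V_0$, hence $D_\infty \mapsto D_0$) disposes of $m=\infty$, completing the proof.

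The main obstacle is the determination of $K_{\cX_0(p^2)}\cdot C_m'$, that is, of the arithmetic genus of the components $C_0', C_\infty'$ of the special fibre: the explicit coefficient $-\tfrac{g_{p^2}-1}{s_p}$ of $V_m$ is engineered precisely so that orthogonality holds when $p_a(C_m') = \tfrac12(g_{p^2}+1-s_p)$, so the whole lemma hinges on establishing this genus value. I expect the cleanest route to it to be the combination of the fibre additivity formula with the Atkin–Lehner symmetry, rather than tracking the arithmetic genus through Edixhoven's three successive blow-downs case by case.
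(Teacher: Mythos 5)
Your proposal is correct and takes essentially the same route as the paper: orthogonality to the fibres at $q \neq p$ via degree/adjunction considerations, and at $p$ the adjunction formula combined with the intersection data of Proposition~\ref{keyprop}, with everything ultimately resting on the symmetry between the two components $C_0'$ and $C_\infty'$ of the special fibre. The only real difference is organizational: the paper asserts $\langle K_{\cX_0(p^2)}, C_0'\rangle = \langle K_{\cX_0(p^2)}, C_\infty'\rangle$ directly and then checks both $\langle D_m, C_m'\rangle$ and $\langle D_m, C_n'\rangle$, whereas you reduce to a single component via $D_m\cdot(C_0'+C_\infty')=0$ and derive the equivalent statement $p_a(C_0')=p_a(C_\infty')=\tfrac12(g_{p^2}+1-s_p)$ from the additivity of arithmetic genus and the Atkin--Lehner involution, thereby making explicit a symmetry that the paper only asserts as ``clear from the discussion''.
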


\begin{proof}
  For any prime $q \neq p$ if $V$ is the corresponding fiber over $(q) \in \spec \zz$, 
  then $\langle V_m, V \rangle = 0$. Moreover, by the adjunction formula \cite[Chapter 9, Proposition 1.35]
  {MR1917232}, $\langle K_{\cX_0(p^2)}, V \rangle = (2 g_{p^2} - 2)\log p$. 
  The horizontal divisor $H_m$ meets any fiber transversally at a smooth $\ff_p$ rational point which gives 
  $\langle D_m, V \rangle = 0$. 
  
  Again using the adjunction formula
  \begin{equation*}
    \langle K_{\cX_0(p^2)}, C_0' + C_{\infty}' \rangle = (2 g_{p^2} - 2)\log p,
  \end{equation*}
  and on the other hand from the discussion of $\cX_0(p^2)$ it is clear that 
  $\langle K_{\cX_0(p^2)}, C_0' \rangle = \langle K_{\cX_0(p^2)}, C_{\infty}' \rangle$,
  hence we have 
  \begin{equation*}
    \langle K_{\cX_0(p^2)}, C_0' \rangle = \langle K_{\cX_0(p^2)}, C_{\infty}' \rangle = 
    (g_{p^2} - 1)\log p.
  \end{equation*}
    
  If $m \in \{0,\infty\}$, then
  \begin{equation*}
     \langle D_m, C_m' \rangle = (g_{p^2} - 1)\log p - (2g_{p^2} - 2)\log p + (g_{p^2} - 1)\log p = 0.
   \end{equation*} 
   
   Finally if $n \in \{0, \infty\}$ and $n \neq m$ then 
   \begin{equation*}
     \langle D_m, C_n' \rangle = (g_{p^2} - 1)\log p - 0 - (g_{p^2} - 1)\log p = 0.
   \end{equation*}
   This completes the proof.
\end{proof} 

The proof of the following lemma is analogous to that of Proposition D of Abbes-Ullmo~\cite{MR1437298}.

\begin{lemma} 
 \label{lem:selfinter}
  For $m \in \{0, \infty\}$, consider the horizontal divisors $H_m$ as above.  We have the following equality of 
  the Arakelov self-intersection number of the relative dualizing sheaf:
   \begin{equation*}
    (\overline{\omega}_{p^2})^2 = -4g_{p^2}(g_{p^2}-1) \langle H_0, H_{\infty}\rangle
                                         + \dfrac{(g_{p^2}^2 - 1)\log p}{s_p}+e_p
  \end{equation*}
  with 
  \begin{equation*}
    e_p=
    \begin{cases}
    0 & \text{if $p \equiv 11 \pmod {12}$,} \\
    O(\log p)&  \text{if $p \not \equiv 11 \pmod {12}$. } \\
    \end{cases}
  \end{equation*}
\end{lemma}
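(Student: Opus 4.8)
Throughout write $g = g_{p^2}$, let $K = K_{\cX_0(p^2)}$ be the canonical divisor carrying the Arakelov metric, and set $a = \langle H_0, H_0\rangle$ and $c = \langle H_0, H_\infty\rangle$, so that $\overline{\omega}_{p^2}^2 = \langle K, K\rangle$. The plan is to expand $\langle K, K\rangle$ using the two representations $K = D_m + (2g-2)H_m - V_m$ for $m \in \{0,\infty\}$ coming from Lemma~\ref{lem:divisors}, exploiting that each $D_m$ is orthogonal to every vertical divisor, and then to dispose of the single surviving Jacobian cross-term by the Manin--Drinfeld theorem. First I would record all the intersection numbers that do not themselves involve $\langle K, K\rangle$: the arithmetic adjunction formula for the sections $\langle K, H_m\rangle = -\langle H_m, H_m\rangle$; the value $\langle K, C_m'\rangle = (g-1)\log p$ from the proof of Lemma~\ref{lem:divisors}; the table of Proposition~\ref{keyprop}, i.e. $\langle C_0', C_\infty'\rangle = s_p\log p = -\langle C_m', C_m'\rangle$; the transversality relations $\langle H_m, C_m'\rangle = \log p$ together with $\langle H_0, C_\infty'\rangle = \langle H_\infty, C_0'\rangle = 0$; and the Atkin--Lehner symmetry $a = \langle H_\infty, H_\infty\rangle$ forced by $W_{p^2}$. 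These give at once $\langle K, V_m\rangle = -\tfrac{(g-1)^2\log p}{s_p}$, $\langle H_m, V_n\rangle = 0$ for $m \neq n$, and $\langle H_\infty, D_0\rangle = -a - (2g-2)c$.

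Next I would carry out the expansion. From $K = D_0 + (2g-2)H_0 - V_0$,
\[
\langle K, K\rangle = \langle K, D_0\rangle + (2g-2)\langle K, H_0\rangle - \langle K, V_0\rangle,
\]
and I would evaluate $\langle K, D_0\rangle$ by pairing $D_0$ against the other representation $K = D_\infty + (2g-2)H_\infty - V_\infty$. The orthogonality of $D_0$ to vertical divisors kills $\langle V_\infty, D_0\rangle$, so $\langle K, D_0\rangle = \langle D_0, D_\infty\rangle + (2g-2)\langle H_\infty, D_0\rangle$. Substituting the inputs above collapses everything to
\[
\langle K, K\rangle = \langle D_0, D_\infty\rangle - 2(2g-2)a - (2g-2)^2 c + \frac{(g-1)^2\log p}{s_p}.
\]

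The third step eliminates $a$. On the generic fibre $D_0 - D_\infty$ restricts to $-(2g-2)([0]-[\infty])$, a torsion class in $J(\qq)$ by Manin--Drinfeld. As $D_0$ and $D_0 - D_\infty$ are both orthogonal to all vertical fibres and of degree zero on $X_0(p^2)$, the Faltings--Hriljac theorem equates $\langle D_0, D_0 - D_\infty\rangle$ with the N\'eron--Tate pairing of their generic classes, which vanishes because one factor is torsion. Unwinding $\langle D_0, D_0 - D_\infty\rangle = 0$ via $D_0 - D_\infty = -(2g-2)(H_0 - H_\infty) + (V_0 - V_\infty)$ and $\langle D_0, V_0 - V_\infty\rangle = 0$ yields the relation $a = c - \tfrac{\log p}{2 s_p}$. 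Inserting this into the previous display and simplifying the coefficients of $c$ and of $\log p / s_p$ gives precisely
\[
\overline{\omega}_{p^2}^2 = -4g_{p^2}(g_{p^2}-1)\langle H_0, H_\infty\rangle + \frac{(g_{p^2}^2-1)\log p}{s_p} + e_p, \qquad e_p := \langle D_0, D_\infty\rangle.
\]

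The main obstacle is the estimate on $e_p$. By the relation just used $\langle D_0, D_\infty\rangle = \langle D_0, D_0\rangle$, so Faltings--Hriljac identifies $e_p$, up to a fixed normalization, with minus the N\'eron--Tate height of the class $\mathrm{cl}(K_{X_0(p^2)} - (2g-2)[\infty])$ on the Jacobian; every other term in the formula is forced, so this height carries the entire content of the estimate. I would extract the stated bound from the case-by-case description of the special fibre in Section~\ref{sec:MinimalModel} together with Manin--Drinfeld: the dependence on $p \bmod 12$ enters only through the presence or absence of the elliptic components $E$ and $F$, and I expect that in the clean case $p \equiv 11 \pmod{12}$ (no elliptic points) the relevant class is torsion, forcing $e_p = 0$, while in the remaining cases the elliptic contributions produce only a bounded correction $e_p = O(\log p)$. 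Making this vanishing and bound rigorous is the delicate step, and is the precise analogue of the argument underlying Proposition~D of Abbes--Ullmo~\cite{MR1437298}.
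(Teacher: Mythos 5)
Your derivation of the main identity is correct and follows essentially the same route as the paper: expand $\overline{\omega}_{p^2}^2$ using the divisors $D_m$ of Lemma~\ref{lem:divisors}, kill the vertical cross-terms by orthogonality, and use Manin--Drinfeld together with Faltings--Hriljac to trade the self-intersections $\langle H_m,H_m\rangle$ for $\langle H_0,H_\infty\rangle$. The only organizational difference is that the paper averages the two expansions $\overline{\omega}_{p^2}^2=-4g_{p^2}(g_{p^2}-1)H_m^2+V_m^2+h_m$ over $m\in\{0,\infty\}$ and applies Faltings--Hriljac to $\langle D_0-D_\infty,D_0-D_\infty\rangle=0$, which lets it avoid your appeal to the Atkin--Lehner symmetry $\langle H_0,H_0\rangle=\langle H_\infty,H_\infty\rangle$ (a true but extra ingredient; alternatively you could run your Manin--Drinfeld relation twice with the roles of $0$ and $\infty$ swapped and only use the sum $a+b$). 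Your relation $a=c-\log p/(2s_p)$ and the resulting coefficients $-4g_{p^2}(g_{p^2}-1)$ and $(g_{p^2}^2-1)/s_p$ check out, and your $e_p=\langle D_0,D_\infty\rangle=\langle D_0,D_0\rangle$ agrees with the paper's $\tfrac12(h_0+h_\infty)$ since $h_0=h_\infty$.

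The genuine gap is the estimate on $e_p$, which you explicitly defer (``making this vanishing and bound rigorous is the delicate step'') but which is exactly where the lemma has content beyond formal manipulation. For $p\equiv 11\pmod{12}$ one needs more than ``the relevant class is torsion'': one must first show, via the Hurwitz formula for $X_0(p^2)\to X(1)$, that $K_{X_0(p^2)}$ is $\qq$-linearly equivalent to a divisor supported on the cusps, and only then does Manin--Drinfeld give $h_0=h_\infty=0$. For $p\not\equiv 11\pmod{12}$ the bound $e_p=O(\log p)$ does not follow from the special-fibre combinatorics at all: the components $E$ and $F$ occur in every congruence class, and what actually varies is the number $v_2=1+\left(\frac{-1}{p}\right)$, $v_3=1+\left(\frac{-3}{p}\right)$ of elliptic points on the generic fibre. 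The paper's argument writes $K_{\cX_0(p^2)}\sim C-\tfrac12\sum Q_i-\tfrac23\sum Q_j$ with $C$ cuspidal, identifies the elliptic points $Q_i,Q_j$ as Heegner points of discriminant $-4$ and $-3$, and then invokes the height formula $h_{NT}((c)-(\infty))=\langle c,c\rangle_\infty+\langle c,c\rangle_{\mathrm{fin}}$ with $\langle c,c\rangle_{\mathrm{fin}}\in\{2\log(p^2),3\log(p^2)\}$ and $\langle c,c\rangle_\infty=O_\epsilon(p^{2\epsilon-2})$ from Michel--Ullmo, together with the parallelogram law, to conclude $h_0=h_\infty=O(\log p)$. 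Without this input your $e_p$ is an unestimated N\'eron--Tate height, and the asymptotic of Theorem~\ref{MaintheoremDDC} would not follow from your version of the lemma.
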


\begin{proof}
  Since $D_m$ has degree 0 and is perpendicular to vertical divisors, 
  by a theorem of Faltings-Hriljac~\cite[Theorem 4]{MR740897}, we obtain:
  \begin{equation} \label{eq:ineq2}
    \langle D_m, D_m \rangle = -2 \Big( \text{N\'{e}ron-Tate height of } \cO(D_m) \Big)
    :=h_m.
  \end{equation}
  This yields
  \begin{equation*}
    \langle D_m, \  K_{\cX_0(p^2)} - (2g_{p^2}-2)H_m  \rangle = h_m.
  \end{equation*}
  The previous expression expands to 
  \begin{equation*}
   \overline{\omega}_{p^2}^2 = 
    -(2g_{p^2}-2)^2 H_m^2 + 2(2g_{p^2}-2) \langle K_{\cX_0(p^2)}, H_m\rangle - 
               \langle K_{\cX_0(p^2)}, V_m \rangle +
               (2g_{p^2}-2) \langle H_m, V_m\rangle+h_m.
  \end{equation*}
   
  Now using the equality $\langle D_m, V_m \rangle = 0$ which yields $\langle K_{\cX_0(p^2)}, V_m \rangle
  - (2g_{p^2}-2) \langle H_m, V_m \rangle + V_m^2 = 0$ and the adjunction formula $\langle K_{\cX_0(p^2)}, H_m 
  \rangle = - H_m^2$, (see Lang \cite[Ch. IV, Sec. 5, Corollary 5.6]{MR969124}), we get:
  \begin{equation*}
    (\overline{\omega}_{p^2})^2 = -4g_{p^2}(g_{p^2}-1)H_m^2 + V_m^2+h_m.
  \end{equation*}
  This yields 
  \begin{equation} \label{eq:ineq3}
    (\overline{\omega}_{p^2})^2 = -2g_{p^2}(g_{p^2}-1) (H_0^2 + H_{\infty}^2) 
    + \frac{1}{2}(V_0^2 + V_{\infty}^2)+ \frac{1}{2}(h_0 +h_{\infty}).
  \end{equation}
    
  Consider the divisor $D_{\infty} - D_0 = (2g_{p^2}-2)(H_0 - H_{\infty}) + (V_{\infty} - V_0)$. 
  The generic fiber of the the line bundle corresponding to the above divisor is supported on cusps. Hence by the  
  Manin-Drinfeld theorem \cite{MR0314846, MR0318157}, $D_{\infty}- D_0$ is a torsion element of the Jacobian
  $J_0(p^2)$. Moreover the divisor $D_{\infty}- D_0$ satisfies the hypothesis of the Faltings-Hriljac theorem, 
  which along with the vanishing of Neron-Tate height at torsion points implies 
  $ \langle D_0- D_{\infty},  D_0- D_{\infty}\rangle=0$. Hence, we obtain
  \begin{equation*}
    H_0^2 + H_{\infty}^2 = 2 \langle H_0, H_{\infty} \rangle + 
                           \frac{V_0^2 - 2 \langle V_0, V_{\infty}\rangle + V_{\infty}^2}{(2g_{p^2}-2)^2}.
  \end{equation*}
  
  Substituting this in \eqref{eq:ineq3} we deduce
  \begin{equation*}
    (\overline{\omega}_{p^2})^2 
    = -4g_{p^2}(g_{p^2}-1) \langle H_0, H_{\infty}\rangle - \frac{1}{2g_{p^2}-2} (V_0^2 + V_{\infty}^2) + 
               \frac{g_{p^2}}{g_{p^2}-1} \langle V_0, V_{\infty}\rangle+ \frac{1}{2}(h_0 +h_{\infty}).
  \end{equation*}
  
  For  $p \equiv 11 \pmod {12}$, the modular curve $X_0(p^2)$ has no elliptic points. We deduce that for 
  $m \in \{0, \infty\}$, the divisors $D_m$ are  supported at cusps and hence $h_0=h_{\infty}=0$ (see 
  \cite[Lemma 4.1.1]{MR1437298}). 
 
  If $p \not\equiv 11 \pmod {12}$, the canonical divisor $K_{\cX_0(p^2)}$ is supported at the set of cusps 
  and the set of elliptic points. By Manin-Drinfeld theorem, the N\'{e}ron-Tate heights $h_{NT}$ of the 
  divisors supported at cusps is zero.  We provide a bound on the N\'{e}ron-Tate heights of elliptic points 
  by a computation similar to \cite[Section 6, equation 36]{MR1614563}.  
  Let $f_{p^2}: X_0(p^2) \rightarrow X_0(1)=\pp^1$ be the natural projection. Let $i$ and $j$ be the points 
  on $X_0(1)$ corresponding to to the points $i$ and $j=e^{\frac{2 \pi \sqrt{-1}}{3}}$ of the complex upper 
  half plane $\hh$. Let $H_i$ (respectively $H_j$) be the divisor of $X_0(p^2)$ consisting of elliptic points
  lying above $i$ (respectively  $j$).
  
  By an application of Hurwitz formula \cite[cf. proof of Lemma 6.1, p. 670]{MR1614563}, we have:
  \[
    K_{\cX_0(p^2)} \sim C 
                        -\frac{1}{2} \sum_{\substack{f_{p^2}(Q_i)=i,\\ e_{Q_i} = 1}} Q_i
                        -\frac{2}{3} \sum_{\substack{f_{p^2}(Q_j)=j,\\ e_{Q_j} = 1}} Q_j , 
  \]
  where $C$ is a divisor with rational coefficients supported at the cusps and $Q_i$ 
  (respectively $Q_j$) are points on $X_0(p^2)$ above $i$ (respectively $j$) with ramification index 
  $e_{Q_i}$ (respectively $e_{Q_j}$).
  
  Hence by an application of the Manin-Drinfeld theorem, we have an equality 
  \cite[Lemma 6.1]{MR1614563}: 
  \begin{equation} \label{eq:heegner}
    h_0 = h_{\infty} = \frac{1}{36} h_{NT}( 3(H_i-v_2 \infty)+4(H_j-v_3 \infty));
  \end{equation}
  with $v_2=(1+(\frac{-1}{p}))$ (the number of elliptic points in $X_0(p^2)$ lying above $i$) 
  and $v_3=(1+(\frac{-3}{p}))$ (the number of elliptic points in $X_0(p^2)$ lying above $j$ ). 
  
  In \cite[Lemma 6.2]{MR1614563} the authors show that the preimages of $i$ under $f_{p^2}$ with ramification
  index $1$ are Heegner points of discriminant $-4$, these are precisely the elliptic points of $X_0(p^2)$ lying
  over $i$, and the preimages of $j$ with ramification index $1$ are Heegner points of index $-3$, these are the 
  elliptic points over $j$.
  
  Let $c$ be an elliptic point of $X_0(p^2)$ lying above $i$ or $j$. 
  By  a formula \cite[p. 307]{MR833192} adapted to our particular modular curve $X_0(p^2)$, we have: 
  \[
    h_{NT}((c)-(\infty))=<c,c>_{\infty}+<c, c>_{\mathrm{fin}},
  \]
  with $<c, c>_{\mathrm{fin}}=2 \log(p^2)$ if $c$ lies above $i$ (respectively,  $<c, c>_{\mathrm{fin}}
  =3 \log(p^2)$ if $c$ lies 
  above $j$) and $<c,c>_{\infty}=O_{\epsilon}(p^{2\epsilon-2})$ \cite[Section 6, p. 671-673]{MR1614563}. 
  By parallelogram law of N\'{e}ron-Tate heights (N\'{e}ron-Tate heights are always positive) and
  using \eqref{eq:heegner} we obtain (see \cite[Section 6, p. 673]{MR1614563}):
  \[
    e_p = \frac{1}{2}(h_0 +h_{\infty}) = O(\log p).
  \]
  
  The lemma now  follows from  \eqref{eq:intersectionnumber}.
\end{proof}

Using the above results, we obtain the Theorem~\ref{MaintheoremDDC} of the paper. 

\begin{proof}[Proof of Theorem~\ref{MaintheoremDDC}]
  By the previous lemma, we have
  \begin{align*}
    (\overline{\omega}_{\cX_0(p^2)})^2 &= -4g_{p^2}(g_{p^2}-1) \langle H_0, H_{\infty}\rangle
      + \dfrac{(g_{p^2}^2 - 1)\log p}{s_p} + e_p\\
    & = 4g_{p^2}(g_{p^2}-1)\Gcan(\infty,0) + \dfrac{(g_{p^2}^2 - 1)\log p}{s_p} + e_p.
  \end{align*}                                          
  An explicit computation of genus [cf. Remark~\ref{genuscomp}] shows that:
  \begin{equation*}
    g_{p^2}-1=\frac{(p+1)(p-6)-12c}{12}
  \end{equation*}
  with $c \in \{0,\frac{1}{2},\frac{2}{3}, \frac{7}{6}\}$. 
  
   By  Proposition~\ref{lem:analysis-main}, we obtain:                                 
  \begin{equation}\label{key}
    4g_{p^2}(g_{p^2}-1)\Gcan(\infty,0)=4g_{p^2} \log p+o(g_{p^2} \log p).
  \end{equation}  
                       
 Furthermore, we deduce the following asymptotic:
  \begin{align*}
    \dfrac{(g_{p^2}^2 - 1)\log p}{s_p} & = \dfrac{(g_{p^2}+1)(g_{p^2}-1) \log p}{s_p}\\
    & = (g_{p^2}+1)\log p[2+o(1)]\\
      &= 2 g_{p^2}\log p+o(g_{p^2}\log p).
  \end{align*}
  Since $e_p=o(g_{p^2}\log p)$, we obtain the theorem.                         
\end{proof}

\bibliographystyle{crelle}
\bibliography{Eisensteinquestion.bib}
\end{document}